\newtheorem{theorem}{Theorem}[section]
\newtheorem{lemma}[theorem]{Lemma}
\newtheorem{corollary}[theorem]{Corollary}
\newtheorem{proposition}[theorem]{Proposition}
\numberwithin{equation}{section}
\begin{document}

\title[Non-intersecting Brownian Motions from $q$ to $p$ points]{Non-intersecting Brownian Motions leaving from and going to several points}

\author[M.~Adler]{Mark Adler}
\address{M.A.\\ Department of Mathematics, Brandeis University \\415 South Street \\MS 050, Waltham, MA 02454}
\email{adler@brandeis.edu}

\author[P.~van Moerbeke]{Pierre van Moerbeke}
\address{P.v.M.\\ Department of Mathematics, Université Catholique de
Louvain\\ Chemin du Cyclotron 2\\ 1348 Louvain-la-Neuve\\ Belgium}
\address{and}
\address{P.v.M.\\ Department of Mathematics, Brandeis University\\ 415 South Street\\ MS 050, Waltham, MA 02454}
\email{pierre.vanmoerbeke@uclouvain.be \\ vanmoerbeke@brandeis.edu}

\author[D.~Vanderstichelen]{Didier Vanderstichelen}
\address{D.V.\\ Department of Mathematics, Université Catholique de
Louvain\\ Chemin du Cyclotron 2\\ 1348 Louvain-la-Neuve\\ Belgium}
\email{didier.vanderstichelen@uclouvain.be}

\date{April 22 2011}
\thanks{The first and second authors gratefully acknowledge the support of a National Science Foundation grant \#
DMS-07-04271. The second author gratefully acknowledges the support of a European
Science Foundation grant (MISGAM), a Marie Curie Grant
(ENIGMA), and a FNRS grant. The second and third authors gratefully aknowledge the partial support of the Belgian Interuniversity Attraction Pole P06/02. The third author is a Research Fellow at F.R.S.-FNRS, Belgium.}
\subjclass[2000]{Primary: 60J60, 60J65, 60G55; secondary: 35Q53, 35Q58.}
\keywords{Non-intersecting Brownian motions, multi-component KP equation, Virasoro constraints.}

\begin{abstract} Consider n non-intersecting Brownian motions on $\mathbb{R}$, depending on time $t \in [0,1]$, with $m_i$ particles forced to leave from $a_i$ at time $t=0$, $1\leq i\leq q$, and $n_j$ particles forced to end up at $b_j$ at time $t=1$, $1\leq j\leq p$. For arbitrary $p$ and $q$, it is not known if the distribution of the positions of the non-intersecting Brownian particles at a given time $0<t<1$, is the same as the joint distribution of the eigenvalues of a matrix ensemble. This paper proves the existence, for general $p$ and $q$, of a partial differential equation (PDE) satisfied by the log of the probability to find all the particles in a disjoint union of intervals $E=\cup_{i=1}^{r}[c_{2i-1},c_{2i}]\subset\mathbb{R}$ at a given time $0<t<1$. The variables are the coordinates of the starting and ending points of the particles, and the boundary points of the set $E$. The proof of the existence of such a PDE, using Virasoro constraints and the multicomponent KP hierarchy, is based on the method of elimination of the unwanted partials; that this is possible is a miracle! Unfortunately we were  unable to find its explicit expression. The case $p=q=2$ will be discussed in the last section.
\end{abstract}
\maketitle

\tableofcontents


\section{Introduction}

Consider $N$ independent Brownian motions on the real line, starting at time $t=0$ and ending at time $t=1$ at prescribed positions, and conditioned not to intersect during the time interval $]0,1[$. In the particular case when all the Brownian motions start at the same position at $t=0$ and end at the same position at $t=1$, let's say the origin, the positions of the Brownian particles at an intermediate time $0<t<1$ have the same distribution, after a space-time transformation, as the eigenvalues of a randomly chosen matrix of the GUE ensemble. It is also the distribution of $N$ Dyson Brownian motions on the real line. This process, discovered by Dyson \cite{F.D.}, describes the motion in time of the eigenvalues of a $N\times N$ Hermitian matrix whose real and imaginary parts of the entries perform independent Ornstein-Uhlenbeck-processes, with an initial distribution given by the invariant measure of the process. In the case of one starting position and two (or more) ending positions, the positions of the Brownian particles at an intermediate time $0<t<1$ have the same distribution, after a space-time transformation, as the eigenvalues of a randomly chosen matrix of the Gaussian Hermitian ensemble with external source. The relationship between non-intersecting Brownian motions and matrix models has been developped by Johansson \cite{K.J.}, and for the Gaussian Hermitian matrix ensemble with external source by Aptekarev-Bleher-Kuijlaars \cite{A.B.K.}. See also Adler-Delépine-van Moerbeke \cite{A.D.V.M.} and Katori-Tanemura \cite{K.T.1,K.T.2} for a detailed description of the relationship between Dyson Brownian motions, non-intersecting Brownian motions and Gaussian Hermitian matrix ensembles. In particular, in \cite{K.T.1} both stochastic processes are obtained as scaling limits of the vicious walkers model. In the two particular cases cited (i.e. non-intersecting Brownian motions with one starting position and one or several ending positions), the relationship between non-intersecting Brownian motions and Hermitian matrix models has led to a deeper comprehension of the diffusion problems. In both cases, partial differential equations (PDE) for the finite $N$ diffusions have been obtained (see \cite{A.S.V.M.,A.V.M.1,A.V.M.V.}). For large $N$, upon taking appropriate scaling limits, different processes appear describing the transition probabilities of critical infinite dimensional diffusions, like the Airy process, the Sine process and the Dyson process (see \cite{A.V.M.4,K.T.2,T.W.1}) for one starting and ending position, and for two or more ending positions the Pearcey process (see \cite{A.B.K.,T.W.2}), the Airy process with $k$ outliers (see \cite{A.D.V.M.}), etc. The approach of Adler-van Moerbeke is to take scaling limits of the PDE's describing the finite $N$ processes to obtain PDE's for the transition probabilities of the critical infinite dimensional diffusions.

Consider now $N$ non-intersecting Brownian motions on the real line starting at time $t=0$ at $q$ and ending at time $t=1$ at $p$ prescribed positions, with $p,q\geq 2$. It is not known if there exists a matrix ensemble whose joint eigenvalue distribution describes the distribution of the positions of the Brownian motions at an intermediate time $0<t<1$. For $p=q=2$ this problem has first been studied by Daems-Kuijlaars \cite{Da.K.} and Daems-Kuijlaars-Veys \cite{Da.K.V.}. In these papers, the authors consider $N/2$ particles going from $a$ to $b$, and $N/2$ particles going from $-a$ to $-b$. They show that the correlation functions of the positions of the non-intersecting Brownian motions have a determinantal form, with a kernel that can be expressed in terms of mixed multiple Hermite polynomials. They analyze the kernel in the large $N$ limit, for a small separation of the starting and ending positions (i.e. when the product $ab$ is sufficiently small), and find the limiting mean density of particles is supported by one or two intervals. Taking usual scaling limits of the kernel in the bulk and near the edges they find the Sine and the Airy kernel. For large separation of the starting and ending positions, those results have been extended by Delvaux-Kuijlaars \cite{D.K.1}. In \cite{A.F.V.M.}, Adler-Ferrari-van Moerbeke study a similar situation, but with an asymmetric number of paths in the left and right starting and ending positions. Recently, Adler-Ferrari-van Moerbeke \cite{A.F.V.M.2} and also Delvaux-Kuijlaars-Zhang \cite{D.K.Z.} (see also \cite{D.K.2}) analyzed the large $N$-limit in a critical regime where the paths fill two tangent ellipses in the time-space plane. Using an appropriate double scaling limit, they prove the existence of a new process describing the diffusion of the particles near the point of tangency.

It seems to be a highly non-trivial problem to obtain concrete results about the processes describing the critical infinite dimensional diffusions, obtained as limiting situations of the problem of $N$ non intersecting Brownian motions on the real line starting at $q$ and ending at $p$ prescribed positions, with $p,q\geq 2$. The aim of this paper is to provide a better understanding of the finite $N$ diffusion for two or more starting and ending positions. We consider $N$ non-intersecting Brownian motions $x_1(t),\dots,x_N(t)$ on $\mathbb{R}$, starting at time $t=0$ in $q$ different points, and arriving in $t=1$ in $p$ different points. If $\mathbb{P}_{b_1,\dots,b_p}^{a_1,\dots, a_q}\big(\text{all}\ x_i(t)\in E\big)$ denotes the probability to find all the particles in a set $E$ at an intermediate time $0<t<1$, we prove the following theorem. 
\begin{theorem}\label{Theorem introduction}
For each value of the parameters $p\geq 1$ and $q\geq 1$, let $K^*$ be the smallest positive integer such that
\begin{align}
(x^2-3x+4)(K^*)^2+(-x^2+3x+4)K^*-2x(x^2-2x-1)> 0,\notag
\end{align}
with $x=p+q$. Let $E$ be a finite union of intervals. Under the assumptions $a_1+\dots+a_q=0$ and $b_1+\dots+b_p=0$, the function $\log \mathbb{P}_{b_1,\dots,b_p}^{a_1,\dots, a_q}\big(\text{all}\ x_i(t)\in E\big)$ satisfies a nonlinear PDE of order $K^*+3$ or less, the variables being the coordinates of the endpoints of the set $E$, and the coordinates of $a_1,\dots,a_q$ and $b_1,\dots,b_p$.
\end{theorem}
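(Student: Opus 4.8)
The plan is to realize the probability as a tau-function of the multicomponent KP hierarchy and then to extract the desired PDE by playing the Hirota bilinear relations against the Virasoro constraints. First I would use the Karlin--McGregor formula together with the Gaussian form of the Brownian transition density to write $\mathbb{P}_{b_1,\dots,b_p}^{a_1,\dots,a_q}(\text{all } x_i(t)\in E)$ as a multiple integral over $E^N$ of a product of two Vandermonde-type determinants weighted by Gaussians centered at the $a_i$ and $b_j$. Inserting the standard KP time-deformations $t^{(\alpha)}=(t_1^{(\alpha)},t_2^{(\alpha)},\dots)$ into each of the $p+q$ exponential weights turns this integral into a function $\tau(t)$ which, evaluated at $t=0$, returns $\mathbb{P}$ up to an explicit normalization; the key structural input is that $\tau$ is a tau-function of the $(p+q)$-component KP hierarchy, hence satisfies the associated bilinear identity and the resulting tower of Hirota equations.

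Second, I would establish the Virasoro constraints. The integral is invariant under the reparametrizations $x\mapsto x+\varepsilon x^{k+1}$ of the integration variable, $k\geq -1$; expressing this invariance produces a family of linear differential operators annihilating $\tau$. Because the boundary points $c_1,\dots,c_{2r}$ of $E$ enter through the domain of integration while $a_1,\dots,a_q,b_1,\dots,b_p$ enter through the weights, these constraints relate the physical derivatives $\partial_{c_i}$, $\partial_{a_i}$, $\partial_{b_j}$ of $\log\tau$ to the first-order KP-time derivatives evaluated at $t=0$. This is the mechanism converting the physical derivatives into KP-time derivatives and back, and it is where the hypotheses $\sum a_i=0$ and $\sum b_j=0$ are used to close the lowest-order constraints.

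Third comes the elimination, which is the heart of the matter. Restricting everything to $t=0$, the Hirota equations together with all their $c,a,b$-derivatives up to a chosen total order $K$ become a linear system whose unknowns are the pure higher KP-time partials of $\log\tau|_{t=0}$ --- the \emph{unwanted} partials --- with coefficients built from physical derivatives of $\log\mathbb{P}$. The strategy is to count: both the number $N_{\mathrm{eq}}(K)$ of available relations and the number $N_{\mathrm{unw}}(K)$ of unwanted partials grow quadratically in $K$, and a direct bookkeeping of the multi-indices, organized by the $p+q$ components, should show that their difference equals
\[
(x^2-3x+4)K^2+(-x^2+3x+4)K-2x(x^2-2x-1),\qquad x=p+q.
\]
The smallest $K$ making this positive is exactly $K^*$; for such $K$ there are strictly more equations than unwanted unknowns, so eliminating the latter leaves at least one nontrivial relation among the physical derivatives alone. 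Tracking the differential order through the elimination (a fixed offset of three coming from the lowest-order bilinear relations surviving the Virasoro substitution) then yields a PDE of order $K^*+3$ or less, as claimed.

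The main obstacle, and the reason the abstract calls success a \emph{miracle}, is that the inequality $N_{\mathrm{eq}}(K)>N_{\mathrm{unw}}(K)$ is only necessary: it guarantees the homogeneous linear system is overdetermined, but not that the elimination produces a genuinely nontrivial PDE rather than the tautology $0=0$. What must be verified is that the relations are sufficiently independent --- that the large coefficient matrix does not degenerate in a way that annihilates every physical combination. Establishing this rank statement, together with carrying out the multi-index bookkeeping precisely enough to pin down the exact quadratic above, is where essentially all of the difficulty lies; the existence claim then follows without producing the explicit equation, which is why the theorem asserts existence and an order bound but not an explicit expression.
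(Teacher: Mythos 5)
Your proposal follows the paper's general architecture (Karlin--McGregor integral, $(p+q)$-component KP tau-function, Virasoro constraints from the reparametrizations $x\mapsto x+\epsilon x^{k+1}$, elimination of unwanted partials by counting equations against unknowns), but the elimination step as you describe it has a genuine gap: it would not close. The paper's deformation is not only by KP times; the weights also carry auxiliary \emph{quadratic} parameters, $\psi_i^{-s}(x)=e^{a_ix+\alpha_ix^2-\sum_k s_k^{(i)}x^k}$ and $\varphi_j^{t}(x)=e^{b_jx+\beta_jx^2+\sum_k t_k^{(j)}x^k}$ with $\sum_i\alpha_i=\sum_j\beta_j=0$, and these are indispensable. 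The bilinear relations actually used involve \emph{second} time derivatives such as $\partial^2/\partial t_2^{(l)}\partial t_1^{(l')}$, so one must express all individual first \emph{and} second time derivatives at $(t,s)=0$ through derivatives in the remaining variables. For the first ones your mechanism suffices: the $p+q$ unknowns $\partial f/\partial s_1^{(i)},\partial f/\partial t_1^{(j)}$ are matched by the $p+q-2$ relations of type \eqref{interesting 2}--\eqref{interesting 3}, the sum rule \eqref{interesting 1} and the $\mathcal{B}_{-1}$-constraint. For the second ones there are again $p+q$ unknowns $\partial f/\partial s_2^{(i)},\partial f/\partial t_2^{(j)}$ but, absent the $\alpha$'s and $\beta$'s, only two relations (the sum rule and the $\mathcal{B}_0$-constraint), so for $p+q>2$ they cannot be removed; the derivatives $\partial/\partial\alpha_i,\partial/\partial\beta_j$ supply exactly the missing $p+q-2$ relations. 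Accordingly, the unwanted unknowns are not ``pure higher KP-time partials'' but the auxiliary derivatives $X_i=(\frac{1}{q}\partial_\alpha-\partial_{\alpha_i})f|_{\mathcal{L}}$ and $Y_j=(\frac{1}{p}\partial_\beta-\partial_{\beta_j})f|_{\mathcal{L}}$, and the system one differentiates (Theorem~\ref{system of equations for log tau with unknowns}) is linear in $X_i,Y_j$ and their derivatives with respect to the $p+q-1$ commuting operators $A_i^{\mathcal{L}},B_j^{\mathcal{L}},\mathcal{B}_{-1}$ \emph{only}. One differentiates with respect to these operators, not with respect to all of $c_1,\dots,c_{2r},a,b$ as you propose: this is what keeps the family of unknowns closed under differentiation (a $c_i$-derivative of a time- or $\alpha$-partial would otherwise be a brand-new unknown), and it is also why $K^*$ depends only on $x=p+q$ and not on the number $r$ of intervals, a feature your scheme would lose.

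The bookkeeping is also quantitatively wrong, and the ``main obstacle'' is misplaced. Neither count is quadratic in $K$: the number of equations is $\frac{1}{2}(p+q)(p+q-1)\binom{p+q+K-1}{K}$ and the number of unknowns is at most $(p+q-2)\binom{p+q+K+1}{K+2}$, both polynomials of degree $p+q-1$ in $K$; the quadratic in the theorem is what the inequality ``equations $>$ unknowns'' becomes after cancelling common factorial factors, not the difference of the two counts. As for nondegeneracy, the paper never proves a rank statement and does not need one for its formulation of existence: writing the affine system as $[a_{ij}(f)]\,(1,x_1,\dots,x_{L^*})^T=0$, the solution vector is visibly nonzero (its first entry is $1$), so any $(L^*+1)\times(L^*+1)$ subsystem forces $\det[a_{ij}(f)]=0$, and this vanishing determinant \emph{is} the asserted PDE of order at most $K^*+3$ (the coefficients involve derivatives of $f$ of order at most three, differentiated $K^*$ more times). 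What remains genuinely unproven---in the paper as well---is that this determinant is not identically zero as a differential expression; the paper only asserts this via the remark about stopping at the first point the balance occurs. So your instinct that something must be checked there is fair, but your proposal neither resolves that point nor supplies the closure mechanism and the correct counting, which are where the proof actually lives.
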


For example, for $4\leq x\leq 8$, the value of $K^*$ in this theorem is given in the following table :
\begin{center}
\begin{tabular}[c]{|c|c|c|c|c|c|}
\hline
$\mathbf{x}$ & $4$ & $5$ & $6$ & $7$ & $8$\\
\hline
$\mathbf{K}^*$ & $3$ & $4$ & $5$ & $5$ & $5$\\
\hline
\end{tabular}
\end{center}

The proof of Theorem \ref{Theorem introduction} will be given in section 5, and is based on the use of a particular integrable hierarchy, and Virasoro constraints. The use of these methods is suggested by the fact that the probability $\mathbb{P}_{b_1,\dots,b_p}^{a_1,\dots, a_q}\big(\text{all}\ x_i(t)\in E\big)$ has different descriptions (see section 2):
\begin{enumerate}
	\item It can be written, after making a space and time transformation, as a block moment matrix
	\begin{align}
	\mathbb{P}_{b_1,\dots,b_p}^{a_1,\dots, a_q}\big(\text{all}\ x_i(t)\in E\big)=\frac{1}{Z_N}\det\Bigg[\Big(\left\langle x^m\psi_i(x)\Big|y^n\varphi_j(y) \right\rangle\Big)_{\substack{0\leq m\leq m_i-1 \\ 0\leq n\leq n_j-1}}\Bigg]_{\substack{1\leq i\leq q \\ 1\leq j\leq p}},\label{block moment}
	\end{align}
	where $\psi_i(x)=e^{\tilde{a}_ix}$, $\varphi_j(y)=e^{\tilde{b}_jy}$, and the following inner product
	\begin{align}
	\left\langle x^m\psi_i(x)\Big|y^n\varphi_j(y) \right\rangle=\int_{\tilde{E}}x^{m+n}e^{(\tilde{a}_i+\tilde{b}_j)x}e^{-\frac{x^2}{2}}dx.\notag
	\end{align}
	The $\sim$'s indicate that a space-time transformation has been performed (see section 2, formula (\ref{normalized problem})).
	
	\item It can be written as a sum of multiple integrals
	\begin{align}
&\mathbb{P}_{b_1,\dots,b_p}^{a_1,\dots, a_q}\big(\text{all}\ x_i(t)\in E\big)\notag\\
&=\frac{1}{Z_N}\sum_{\sigma\in S_N}(-1)^{\sigma}\int_{\tilde{E}^N}\Big(\prod_{i=1}^{N}e^{-\frac{x_i^2}{2}}\,dx_i\Big)\Big(\Delta_{m_1}(x_1,x_2,\dots,x_{m_1})\prod_{i=1}^{m_1}\psi_1(x_{i})\Big)\notag\\
&\phantom{N!\int_{E^N}=}\times\dots\,\times\Big(\Delta_{m_q}(x_{m_1+\dots+m_{q-1}+1},\dots,x_{m_1+\dots+m_q})\prod_{i=1}^{m_q}\psi_q(x_{m_1+\dots+m_{q-1}+i})\Big)\notag\\
&\phantom{N!\int_{E^N}=}\times\Big[\Big(\Delta_{n_1}(x_{\sigma(1)},\dots,x_{\sigma(n_1)})\prod_{i=1}^{n_1}\varphi_1(x_{\sigma(i)})\Big)\notag\\
&\phantom{N!\int_{E^N}=}\times\dots\times\Big(\Delta_{n_p}(x_{\sigma(n_1+\dots+n_{p-1}+1)},\dots,x_{\sigma(n_1+\dots+n_p)})\prod_{i=1}^{n_p}\varphi_p(x_{\sigma(n_1+\dots+n_{p-1}+i)})\Big)\Big],\label{sum of integrals}
\end{align}
	where $\Delta_n$ is the Vandermonde determinant, and $S_N$ is the group of permutations of $N$ elements. 
\end{enumerate}

As shown in Adler-van Moerbeke-Vanhaecke \cite{A.V.M.V.}, the determinants of block moment matrices deformed in an appropriate way satisfy integrable hierarchies. Concretely, the determinant (\ref{block moment}) is deformed by adding exponentials containing additional families of time variables, one family for each weight function $\varphi_i$ and $\psi_j$, or equivalently, (\ref{sum of integrals}) is deformed by adding exponentials containing additional families of time variables, one family for each Vandermonde determinant. The determinants of the deformed block-moment matrices (\ref{block moment}) are then tau functions for the multi-component KP hierarchy. The multi-component KP hierarchy is a very general hierarchy of integrable equations, describing the time-evolution of matrix-valued pseudo-differential operators, depending on several families of time variables. These operators can be expressed in terms of so-called tau-functions, which encode the whole hierarchy. As a consequence, the determinants of the deformed block moment matrices satisfy some nonlinear PDE's. This is developped in section 3.

When $p=1$ or $q=1$, it is easy to see that all the terms in (\ref{sum of integrals}) are equal to each other, and the sum is simply $N!$ times a $N$-tuple integral over $\tilde{E}$. This unique integral corresponds to the joint eigenvalue probability of the Gaussian Hermitian ensemble with external source. It is a well known fact that matrix integrals deformed in an appropriate way satisfy Virasoro constraints (see \cite{A.S.V.M.}). These constraints are linear PDE's satisfied by the deformed integrals, involving a time part and a boundary part. Although we do not know if (\ref{sum of integrals}) for general $p$ and $q$ corresponds to (the reduction to polar coordinates of) a matrix integral, we show that each term in (\ref{sum of integrals}) separately satisfies Virasoro constraints. As a surprise, it appears that all the terms satisfy \emph{the same} Virasoro constraints, and hence, by linearity, it follows that (\ref{sum of integrals}) satisfies Virasoro constraints. This is developped in section 4.

Following a method developped by Adler-Shiota-van Moerbeke (see for example \cite{A.S.V.M.} and \cite{A.V.M.1}), Virasoro constraints with time and boundary parts can be used to eliminate all the partial derivatives with respect to the added time variables in the non-linear PDE's from the integrable hierarchy, and hence to obtain a non-linear PDE with respect to the variables of the unperturbed problem. The complexity of the problem studied in this paper does not enable one to perform concretely this elimination process and to obtain an explicit formula for arbitrary values $p,q>2$. It is a priori not even obvious at all that it converges to a PDE after a finite number of steps! In Theorem \ref{Theorem introduction} we prove, however, using a simple combinatorial argument, that it indeed does, and this for general $p$ and $q$. We would like to emphasize that the existence of a PDE satisfied by $\mathbb{P}_{b_1,\dots,b_p}^{a_1,\dots, a_q}\big(\text{all}\ x_i(t)\in E\big)$ is not obvious at all. Our proof rests on two surprising facts, the first being that the perturbed problem satisfies Virasoro constraints, and the second that the elimination process converges after a finite number of steps.

In the last section, we consider the case of non-intersecting Brownian motions with two starting and two ending positions. In this particular case, we improve considerably the general method given in the proof of Theorem \ref{Theorem introduction} to obtain a PDE.


\section{Non-intersecting Brownian motions with $q$ starting points and $p$ ending points}

Consider $N$ non-intersecting Brownian motions $x_1(t)$, $x_2(t)$, \dots, $x_N(t)$ in $\mathbb{R}$, leaving from distinct points $\alpha_1<\alpha_2<\dots<\alpha_N$ and forced to end up at distinct points $\beta_1<\beta_2<\dots<\beta_N$. From the Karlin-McGregor formula ~\cite{Karlin-McGregor}, we know that the probability that all $x_i(t)$ belong to a set $E\subset\mathbb{R}$ at a given time $0<t<1$ can be expressed in terms of the Gaussian transition probability
\begin{align}
p(t,x,y)=\frac{1}{\sqrt{\pi t}}\,e^{-\frac{(x-y)^2}{t}},\notag
\end{align}
in the following way
\begin{align}
\mathbb{P}_{\mathbf{\beta}}^{\mathbf{\alpha}}\big(\text{all }x_i(t)\in E\big)&:=\mathbb{P}\Bigg(\text{all }x_i(t)\in E\Bigg|\begin{array}{ll}\big(x_1(0),\dots,x_N(0)\big)=\big(\alpha_1,\dots,\alpha_N\big) \\ \big(x_1(1),\dots,x_N(1)\big)=\big(\beta_1,\dots,\beta_N\big) \end{array}\Bigg)\notag\\
&=\frac{1}{Z_N}\,\int_{E^N}\det\big[p(t,\alpha_i,x_j)\big]_{1\leq i,j\leq N}\,\det\big[p(1-t,x_i,\beta_j)\big]_{1\leq i,j\leq N}\,\prod_{i=1}^{N}dx_i\notag\\
&=\frac{1}{\tilde{Z}_N}\,\int_{E^N}\det\big[e^{\frac{2\alpha_ix_j}{t}}\big]_{1\leq i,j\leq N}\,\det\big[e^{\frac{2\beta_ix_j}{1-t}}\big]_{1\leq i,j\leq N}\prod_{i=1}^{N}e^{\frac{-x_i^2}{t(1-t)}}\,dx_i,\notag
\end{align}
where $Z_N$ and $\tilde{Z}_N$ are normalizing factors. In particular, if
\begin{align}
(\alpha_1,\dots,\alpha_N)\ &=\ \big(\underbrace{a_1,a_1,\dots,a_1}_{m_1},\underbrace{a_2,a_2,\dots,a_2}_{m_2},\dots,\underbrace{a_q,a_q,\dots,a_q}_{m_q}\big),\qquad a_1<a_2<\dots<a_q,\notag\\
(\beta_1,\dots,\beta_N)\ &=\ \big(\underbrace{b_1,b_1,\dots,b_1}_{n_1},\underbrace{b_2,b_2,\dots,b_2}_{n_2},\dots,\underbrace{b_p,b_p,\dots,b_p}_{n_p}\big),\qquad b_1<b_2<\dots<b_p,\notag
\end{align}
with $\sum_{i=1}^{q}a_i=\sum_{i=1}^{p}b_i=0$ and $\sum_{i=1}^{q}m_i=\sum_{i=1}^{p}n_i=N$, then we have
\begin{align}
\mathbb{P}_{b_1,\dots,b_p}^{a_1,\dots,a_q}\big(&\text{all }x_i(t)\in E\big)\notag\\
&:=\mathbb{P}\left(\text{all }x_i(t)\in E\Bigg|\begin{array}{llll}\big(x_1(0),\dots,x_N(0)\big)=\big(\underbrace{a_1,\dots,a_1}_{m_1},\dots,\underbrace{a_q,\dots,a_q}_{m_q}\big) \\ \big(x_1(1),\dots,x_N(1)\big)=\big(\underbrace{b_1,\dots,b_1}_{n_1},\dots,\underbrace{b_p,\dots,b_p}_{n_p}\big) \end{array}\right)\notag\\
&=\lim_{\substack{\alpha_1,\dots,\alpha_{m_1}\to a_1 \\ \dots \\ \alpha_{m_1+\dots+m_{q-1}+1},\dots,\alpha_{m_1+\dots+m_q}\to a_q \\ \beta_1,\dots,\beta_{n_1}\to b_1 \\ \dots \\ \beta_{n_1+\dots+n_{p-1}+1},\dots,\beta_{n_1+\dots+n_p}\to b_p}}\mathbb{P}_{\mathbf{\beta}}^{\mathbf{\alpha}}\big(\text{all }x_i(t)\in E\big).\notag
\end{align}
Consequently, we obtain
\begin{multline}
\mathbb{P}_{b_1,\dots,b_p}^{a_1,\dots,a_q}\big(\text{all }x_i(t)\in E\big)=\frac{1}{\hat{Z}_N}\int_{E^N}\prod_{i=1}^{N}e^{\frac{-x_i^2}{t(1-t)}}\,dx_i\notag\\
\times\,\det\left(\begin{array}{c}
\Big(x_j^ie^{\frac{2a_1x_j}{t}}\Big)_{\substack{0\leq i\leq m_1-1 \\ 1\leq j\leq N}} \\
 \\
\vdots \\
 \\
\Big(x_j^ie^{\frac{2a_qx_j}{t}}\Big)_{\substack{0\leq i\leq m_q-1 \\ 1\leq j\leq N}} 
\end{array}\right)\ .\,\det\left(\begin{array}{c}
\Big(x_j^ie^{\frac{2b_1x_j}{1-t}}\Big)_{\substack{0\leq i\leq n_1-1 \\ 1\leq j\leq N}} \\
 \\
\vdots \\
 \\
\Big(x_j^ie^{\frac{2b_px_j}{1-t}}\Big)_{\substack{0\leq i\leq n_p-1 \\ 1\leq j\leq N}} 
\end{array}\right).\notag
\end{multline}
We have, using the change of variables $x_i=\sqrt{\frac{t(1-t)}{2}}\,y_i$, $1\leq i\leq N$,
\begin{align}
\mathbb{P}_{b_1,\dots,b_p}^{a_1,\dots,a_q}\big(\text{all }x_i(t)\in E\big)=P_{p,q}\Big(\sqrt{\frac{2}{t(1-t)}}\,E;\sqrt{\frac{2(1-t)}{t}}\,a,\sqrt{\frac{2t}{1-t}}\,b\Big),\label{proba-normalized problem}
\end{align}
with the normalized problem being
\begin{align}
P_{p,q}(E;a,b):=\frac{1}{Z_{p,q}}\int_{E^N}\Big(\prod_{i=1}^{N}e^{\frac{-x_i^2}{2}}\,dx_i\Big)\,\det\big[\tilde{\psi}_i(x_j)\big]_{1\leq i,j\leq N}\,\det\big[\tilde{\varphi}_i(x_j)\big]_{1\leq i,j\leq N},\label{normalized problem}
\end{align}
where $Z_{p,q}$ is a normalizing factor, and where we have introduced the following notation
\begin{align}
\big(\tilde{\psi}_1(x),\dots,\tilde{\psi}_N(x)\big):=&\Big(e^{a_1x},xe^{a_1x},\dots,x^{m_1-1}e^{a_1x},e^{a_2x},xe^{a_2x},\dots,\notag\\
&\phantom{\Big(e^{a_1x}}x^{m_2-1}e^{a_2x},\dots,e^{a_qx},xe^{a_qx},\dots,x^{m_q-1}e^{a_qx}\Big),\notag\\
\big(\tilde{\varphi}_1(x),\dots,\tilde{\varphi}_N(x)\big):=&\Big(e^{b_1x},xe^{b_1x},\dots,x^{n_1-1}e^{b_1x},e^{b_2x},xe^{b_2x},\dots,\notag\\
&\phantom{\Big(e^{b_px}}x^{n_2-1}e^{b_2x},\dots,e^{b_px},xe^{b_px},\dots,x^{n_p-1}e^{b_px}\Big).\notag
\end{align}
In the following proposition, we consider a general situation, of which (\ref{normalized problem}) is a special case by setting $V(x)=\frac{x^2}{2}$, $\psi_i(x)=e^{a_i x},\  1\leq i\leq q$, and $\varphi_i(x)=e^{b_i x},\  1\leq i\leq p$.
\begin{proposition}\label{proposition 1}
Given an arbitrary potential $V(x)$ and arbitrary functions $\psi_1(x),\dots, \psi_q(x)$ and $\varphi_1(x),\dots,\varphi_p(x)$, define ($N=m_1+\dots+m_q=n_1+\dots n_p$)
\begin{align}
\big(\tilde{\psi}_1(x),\dots,\tilde{\psi}_N(x)\big):=&\Big(\psi_1(x),x\psi_1(x),\dots,x^{m_1-1}\psi_1(x),\psi_2(x),x\psi_2(x),\notag\\
&\dots,x^{m_2-1}\psi_2(x),\dots,\psi_q(x),x\psi_q(x),\dots,x^{m_q-1}\psi_q(x)\Big),\notag\\
\big(\tilde{\varphi}_1(x),\dots,\tilde{\varphi}_N(x)\big):=&\Big(\varphi_1(x),x\varphi_1(x),\dots,x^{n_1-1}\varphi_1(x),\varphi_2(x),x\varphi_2(x),\dots,x^{n_2-1}\varphi_2(x),\dots,\notag\\
&\phantom{\Big(\varphi_1(x)}\varphi_p(x),x\varphi_p(x),\dots,x^{n_p-1}\varphi_p(x)\Big).\notag
\end{align}
We have
\begin{align}
&N!\int_{E^N}\Big(\prod_{i=1}^{N}e^{-V(x_i)}\,dx_i\Big)\,\det\big[\tilde{\psi}_i(x_j)\big]_{1\leq i,j\leq N}\,\det\big[\tilde{\varphi}_i(x_j)\big]_{1\leq i,j\leq N}\notag\\
&\phantom{N!\int_{E^N}}=(N!)^2\det\Bigg[\int_{E}\tilde{\psi}_i(x)\tilde{\varphi}_j(x)e^{-V(x)}dx\Bigg]_{1\leq i,j\leq N}\notag\\
&\phantom{N!\int_{E^N}}=\binom{N}{m_1,m_2,\dots,m_q}\binom{N}{n_1,n_2,\dots,n_p}\int_{E^N}\Big(\prod_{i=1}^{N}e^{-V(x_i)}\,dx_i\Big)\Big(\Delta_{m_1}(x^{(1)})\prod_{i=1}^{m_1}\psi_1(x_{i})\Big)\notag\\
&\phantom{N!\int_{E^N}=}\times\dots\,\times\Big(\Delta_{m_q}(x^{(q)})\prod_{i=1}^{m_q}\psi_q(x_{m_1+\dots+m_{q-1}+i})\Big)\notag\\
&\phantom{N!\int_{E^N}=}\times\sum_{\sigma\in S_N}(-1)^{\sigma}\Big[\Big(\Delta_{n_1}(x_{\sigma(1)},\dots,x_{\sigma(n_1)})\prod_{i=1}^{n_1}\varphi_1(x_{\sigma(i)})\Big)\notag\\
&\phantom{N!\int_{E^N}=\times\Big[}\times\dots\times\Big(\Delta_{n_p}(x_{\sigma(n_1+\dots+n_{p-1}+1)},\dots,x_{\sigma(n_1+\dots+n_p)})\prod_{i=1}^{n_p}\varphi_p(x_{\sigma(n_1+\dots+n_{p-1}+i)})\Big)\Big],\label{formula proposition 1}
\end{align}
where $x^{(1)}=(x_1,x_2,\dots,x_{m_1})$, \dots, $x^{(q)}=(x_{m_1+\dots+m_{q-1}+1},\dots,x_{m_1+\dots+m_q})$, and $\Delta_n(x_1,\dots,x_n)=\det[x_j^{i-1}]_{1\leq i,j\leq n}$ is the Vandermonde determinant.
\end{proposition}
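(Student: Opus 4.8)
The statement really bundles together two independent identities, and I would treat them separately. The \emph{middle equality} is just the classical Andr\'eief (Heine--Gram) identity. I would absorb the weight by multiplying the $j$-th column of $\det[\tilde\psi_i(x_j)]$ by $e^{-V(x_j)}$, turning the integrand into a genuine product of two $N\times N$ determinants, $\det[\tilde\psi_i(x_j)e^{-V(x_j)}]$ and $\det[\tilde\varphi_i(x_j)]$. Andr\'eief's identity then converts the $N$-fold integral into $N!$ times the Gram determinant $\det\big[\int_E \tilde\psi_i\tilde\varphi_j\,e^{-V}dx\big]_{1\le i,j\le N}$, and multiplying by the prefactor $N!$ produces the $(N!)^2$. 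This step is entirely standard.

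The \emph{last equality} is the combinatorial heart. The plan is to expand each determinant by the generalized (block) Laplace expansion along its natural row-blocks. Since the rows of block $k$ of $\det[\tilde\psi_i(x_j)]$ are $\psi_k,x\psi_k,\dots,x^{m_k-1}\psi_k$, each block minor on a column set $I_k$ factors as $\prod_{j\in I_k}\psi_k(x_j)\cdot\Delta_{m_k}(x_{I_k})$, a Vandermonde times a product of weights. Hence
\begin{align}
\det[\tilde\psi_i(x_j)]=\sum_{(I_1,\dots,I_q)}\epsilon(I_1,\dots,I_q)\prod_{k=1}^{q}\Big(\Delta_{m_k}(x_{I_k})\prod_{j\in I_k}\psi_k(x_j)\Big),\notag
\end{align}
the sum running over ordered set partitions of $\{1,\dots,N\}$ into blocks of sizes $m_1,\dots,m_q$ (each $I_k$ listed increasingly), with $\epsilon(I)$ the sign of the arrangement; the analogous formula holds for $\det[\tilde\varphi_i(x_j)]$ with sizes $n_1,\dots,n_p$.

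Next come two symmetrization arguments going in opposite directions. For the $\tilde\psi$ factor I would exploit that the full integrand is symmetric in $x_1,\dots,x_N$, being a product of two antisymmetric determinants against the symmetric measure $\prod e^{-V(x_i)}dx_i$. Relabelling the dummy variables by the permutation carrying the consecutive blocks $x^{(k)}$ to $I_k$, the product $\prod_k\Delta_{m_k}(x_{I_k})$ returns to the standard $\prod_k\Delta_{m_k}(x^{(k)})$ with a plus sign, while $\det[\tilde\varphi_i(x_j)]$ contributes exactly $\epsilon(I)$; the two copies of $\epsilon(I)$ cancel, so every one of the $\binom{N}{m_1,\dots,m_q}$ terms integrates to the same value, yielding that multinomial coefficient and pinning the $\psi$-Vandermondes to the blocks $x^{(k)}$. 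For the $\tilde\varphi$ factor the remaining integrand is no longer symmetric, so instead of collapsing I would \emph{re-expand}: the full sum over $S_N$ appearing in \eqref{formula proposition 1} merely overcounts the set-partition expansion, since each ordered partition of sizes $n_1,\dots,n_p$ is produced by $\prod_l n_l!$ permutations that contribute identically (an intra-block transposition flips both $(-1)^{\sigma}$ and the relevant $\Delta_{n_l}$). Thus the $S_N$-sum equals $(\prod_l n_l!)\det[\tilde\varphi_i(x_j)]$, and the bookkeeping factor $\binom{N}{n_1,\dots,n_p}\prod_l n_l!=N!$ combines with the earlier multinomial to reproduce precisely the prefactor $N!$ of the first line.

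The one place that genuinely demands care, and which I expect to be the main obstacle, is the sign bookkeeping in these two symmetrizations: one must check in each case that the Laplace sign $\epsilon$ of a set partition is exactly cancelled by the transposition sign that the \emph{opposite} determinant acquires when the integration variables and the Vandermondes are permuted, and that listing each $I_k$ in increasing order makes this cancellation clean (all residual signs becoming $+1$). Once these signs are reconciled the argument reduces to counting ordered set partitions, and the three expressions coincide.
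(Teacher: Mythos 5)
Your proof is correct, and it reaches the paper's conclusion by a differently organized route. The middle equality is treated identically: the paper's ``standard identity'' $\det[a_{ij}]\det[b_{ij}]=\sum_{\sigma\in S_N}\det\big[a_{i,\sigma(j)}b_{j,\sigma(j)}\big]$ followed by distributing the integration over the columns is exactly the Andr\'eief argument you give. For the last equality, however, the paper never invokes the block Laplace expansion: it expands $\det[\tilde\psi_i(x_j)]$ as a bare permutation sum, absorbs each $\sigma$ by the change of variables $x_i=y_{\sigma^{-1}(i)}$ (the sign migrating onto $\det[\tilde\varphi_i]$), and then \emph{synthesizes} the Vandermondes $\Delta_{m_k}$ by summing over the block subgroup $S_{m_1}\times\dots\times S_{m_q}$ and dividing by $\prod_k m_k!$; the $\varphi$-side Vandermondes arise from the coset factorization $\sigma=\tilde\sigma\circ(\tilde\sigma_1\times\dots\times\tilde\sigma_p)$ with another division by $\prod_l n_l!$. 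You run the same coset decomposition of $S_N$ in the opposite direction: your block Laplace expansion is precisely the paper's permutation sum already resummed over the block subgroup, so the Vandermondes are present from the outset, and the work shifts to collapsing the $\binom{N}{m_1,\dots,m_q}$ partition terms by relabelling (with $\epsilon(I)$ cancelling against the sign picked up by $\det[\tilde\varphi_i]$ --- the same cancellation the paper exploits) and to recognizing the $S_N$-sum in the statement as a $\prod_l n_l!$-fold overcount of the Laplace expansion of $\det[\tilde\varphi_i]$. What your packaging buys is modularity: the sign bookkeeping you flag as the main obstacle is quarantined inside two classical lemmas, and it resolves exactly as you predict, since $\epsilon(I)$ is the sign of the canonical permutation carrying the consecutive blocks to $(I_1,\dots,I_q)$ with each $I_k$ listed increasingly, which is precisely the sign the opposite determinant acquires under that relabelling. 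What the paper's packaging buys is self-containedness: it never needs to state, or fix sign conventions for, the generalized Laplace expansion, at the price of more explicit permutation manipulation.
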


\begin{proof}
Let $\tilde{P}(E;p,q)$ be left hand side in (\ref{formula proposition 1})
\begin{align}
\tilde{P}(E;p,q):=N!\int_{E^N}\Big(\prod_{i=1}^{N}e^{-V(x_i)}\,dx_i\Big)\,\det\big[\tilde{\psi}_i(x_j)\big]_{1\leq i,j\leq N}\,\det\big[\tilde{\varphi}_i(x_j)\big]_{1\leq i,j\leq N}.\notag
\end{align}
The first identity in (\ref{formula proposition 1}) is a consequence of applying the following standard identity
\begin{align}
\det[a_{ij}]_{1\leq i,j\leq N}\det[b_{ij}]_{1\leq i,j\leq N}=\sum_{\sigma\in S_N}\det\big[a_{i,\sigma(j)}b_{j,\sigma(j)}\big]_{1\leq i,j\leq N},\notag
\end{align}
and distributing the integration over the different columns.

We prove now the second identity in (\ref{formula proposition 1}). Working out the determinant $\det\big[\tilde{\psi}_i(x_j)\big]_{1\leq i,j\leq N}$ we obtain
\begin{align}
\tilde{P}(E;p,q)&=N!\sum_{\sigma\in S_N}(-1)^\sigma\int_{E^N}\Big(\prod_{i=1}^{N}e^{-V(x_i)}\,dx_i\Big)\,\Big(\prod_{i=1}^{N}\tilde{\psi}_i(x_{\sigma(i)})\Big)\,\det\big[\tilde{\varphi}_i(x_j)\big]_{1\leq i,j\leq N}.\notag
\end{align}
In each term of this summation, we make the change of variables $x_i=y_{\sigma^{-1}(i)}$, $1\leq i\leq N$. We have
\begin{align}
\tilde{P}(E;p,q)&=(N!)^2\int_{E^N}\Big(\prod_{i=1}^{N}e^{-V(y_i)}\,dy_i\Big)\,\Big(\prod_{i=1}^{N}\tilde{\psi}_i(y_{i})\Big)\,\det\big[\tilde{\varphi}_i(y_j)\big]_{1\leq i,j\leq N},\notag
\end{align}
since $\det\big[\tilde{\varphi}_i(y_j)\big]_{1\leq i,j\leq N}=(-1)^\sigma \det\big[\tilde{\varphi}_i(x_{j})\big]_{1\leq i,j\leq N}$. Now take $\sigma_1\in S_{m_1}$, $\sigma_2\in S_{m_2}$, \dots, $\sigma_q\in S_{m_q}$ arbitrarily and define the permutation $\sigma:=\sigma_1\times\sigma_2\times\dots\times\sigma_q\,\in\,S_N$. Consider the following change of variables $y\to z$ defined by $\sigma$:
\begin{align}
(y_1,\dots,y_{m_1})&=(z_{\sigma_1(1)},\dots,z_{\sigma_1(m_1)}),\notag\\
(y_{m_1+1},\dots,y_{m_1+m_2})&=(z_{m_1+\sigma_2(1)},\dots,z_{m_1+\sigma_2(m_2)}),\notag\\
&\vdots\notag\\
(y_{m_1+\dots+m_{q-1}+1},\dots,y_{m_1+\dots+m_q})&=(z_{m_1+\dots+m_{q-1}+\sigma_q(1)},\dots,z_{m_1+\dots+m_{q-1}+\sigma_q(m_q)}).\notag
\end{align}
This change of variables leaves the integral unchanged. Consequently, if we sum over all the permutations $\sigma_1\times\sigma_2\times\dots\times\sigma_q\,\in\,S_{m_1}\times\dots\times S_{m_q}$ and divide by $m_1!m_2!\dots m_q!$, we have by the definition of $\tilde{\psi}_i$ and $\Delta_n(z)$
\begin{gather}
\tilde{P}(E;p,q)=N!\binom{N}{m_1,m_2,\dots,m_q}\int_{E^N}\Big(\prod_{i=1}^{N}e^{-V(z_i)}\,dz_i\Big)\Big(\Delta_{m_1}(z^{(1)})\prod_{i=1}^{m_1}\psi_1(z_{i})\Big)\notag\\
\times\Big(\Delta_{m_2}(z^{(2)})\prod_{i=1}^{m_2}\psi_2(z_{m_1+i})\Big)\times\dots\times\Big(\Delta_{m_q}(z^{(q)})\prod_{i=1}^{m_q}\psi_q(z_{m_1+\dots+m_{q-1}+i})\Big)\det\big[\tilde{\varphi}_i(z_j)\big]_{1\leq i,j\leq N}.\label{P tilde}
\end{gather}
We develop the determinant $\det\big[\tilde{\varphi}_i(z_j)\big]_{1\leq i,j\leq N}$ in this expression
\begin{align}
\det\big[\tilde{\varphi}_i(z_j)\big]_{1\leq i,j\leq N}&=\sum_{\sigma\in S_N}(-1)^\sigma\prod_{i=1}^{N}\tilde\varphi_i\big(z_{\sigma(i)}\big)\notag\\
&=\sum_{\sigma\in S_N}(-1)^\sigma\Big(\prod_{i=1}^{n_1}\varphi_1\big(z_{\sigma(i)}\big)z_{\sigma(i)}^{i-1}\Big)\Big(\prod_{i=1}^{n_2}\varphi_2\big(z_{\sigma(n_1+i)}\big)z_{\sigma(n_1+i)}^{i-1}\Big)\times\dots\notag\\
&\phantom{=\sum_{\sigma\in S_N}}\times\Big(\prod_{i=1}^{n_p}\varphi_p\big(z_{\sigma(n_1+\dots+n_{p-1}+i)}\big)z_{\sigma(n_1+\dots+n_{p-1}+i)}^{i-1}\Big).\notag
\end{align}
Fix $\tilde{\sigma}_1\in S_{n_1},\dots,\tilde{\sigma}_p\in S_{n_p}$ and, for a given permutation $\sigma\in S_N$, let $\tilde{\sigma}\in S_N$ be such that $\sigma=\tilde\sigma\circ(\tilde\sigma_1\times\tilde\sigma_2\times\dots\times\tilde\sigma_p)$, but when $\tilde{\sigma}$ runs over $S_N$, then so does $\sigma$. Consequently we have
\begin{align}
\det\big[\tilde{\varphi}_i(z_j)\big]_{1\leq i,j\leq N}&=\sum_{\tilde\sigma\in S_N}(-1)^{\tilde\sigma}(-1)^{\tilde\sigma_1}\dots(-1)^{\tilde\sigma_p}\Big(\prod_{i=1}^{n_1}\varphi_1\big(z_{\tilde\sigma\circ\tilde\sigma_1(i)}\big)z_{\tilde\sigma\circ\tilde\sigma_1(i)}^{i-1}\Big)\times\dots\notag\\
&\phantom{=\sum_{\sigma\in S_N}}\times\Big(\prod_{i=1}^{n_p}\varphi_p\big(z_{\tilde\sigma(n_1+\dots+n_{p-1}+\tilde\sigma_p(i))}\big)z_{\tilde\sigma(n_1+\dots+n_{p-1}+\tilde\sigma_p(i))}^{i-1}\Big).\notag
\end{align}
We substitute this expression in equation (\ref{P tilde}). For each $\tilde\sigma\in S_N$ we further sum over $\tilde\sigma_1\in S_{n_1},\tilde\sigma_2\in S_{n_2},\dots,\tilde\sigma_p\in S_{n_p}$ and so must divide by $n_1!n_2!\dots n_p!$ as we have overcounted. We then obtain the second equality in (\ref{formula proposition 1}). This ends the proof.
\end{proof}


\section{An integrable deformation and $(p+q)$-component KP}

The connection of the problem of non-intersecting brownian motions on $\mathbb{R}$ with the multi-component KP hierarchy is explained in \cite{A.V.M.V.}. The main ideas are being sketched in this section.


\subsection{The $(p+q)$-component KP hierarchy}

Define two sets of weights
\begin{align}
\psi_1(x),\dots,\psi_q(x),\quad\text{and}\quad \varphi_1(y),\dots,\varphi_p(y),\qquad\text{with}\ x,y\in\mathbb{R},\notag
\end{align}
and deformed weights depending on time parameters $s^{(\alpha)}=\big(s^{(\alpha)}_1,s^{(\alpha)}_2,\dots\big)$, $1\leq \alpha\leq q$, and $t^{(\beta)}=\big(t^{(\beta)}_1,t^{(\beta)}_2,\dots\big)$, $1\leq \beta\leq p$, denoted by
\begin{align}
\psi_\alpha^{-s}(x):=\psi_\alpha(x)e^{-\sum_{k=1}^{\infty}s^{(\alpha)}_kx^k},\quad\text{and}\quad \varphi_\beta^{t}(y):=\varphi_\beta(y)e^{\sum_{k=1}^{\infty}t^{(\beta)}_ky^k}.\notag
\end{align}
For each set of integers
\begin{align}
\vec{m}=(m_1,\dots,m_q),\quad \vec{n}=(n_1,\dots,n_p),\quad \text{with}\ |\vec{m}|=|\vec{n}|,\notag
\end{align}
$|\vec{m}|=\sum_{i=1}^{q}m_i$, $|\vec{n}|=\sum_{j=1}^{p}n_j$, consider the determinant of the moment matrix $T_{\vec{m}\vec{n}}$ of size $|\vec{m}|=|\vec{n}|$, composed of $pq$ blocks of sizes $m_in_j$. The moments are taken with regard to a (not necessarily symmetric) inner product $\left\langle \cdot\,|\,\cdot\right\rangle$
\begin{align}
\tau_{\vec{m}\vec{n}}&(s^{(1)},\dots,s^{(q)};t^{(1)},\dots,t^{(p)})\notag\\
&:=\det T_{\vec{m}\vec{n}}\notag\\
&:=\det\left(\begin{array}{ccc}
\Big(\left\langle x^i\psi_1^{-s}(x)\,|\,y^j\varphi_1^{t}(y)\right\rangle\Big)_{\substack{0\leq i< m_1 \\ 0\leq j< n_1}} & \dots & \Big(\left\langle x^i\psi_1^{-s}(x)\,|\,y^j\varphi_p^{t}(y)\right\rangle\Big)_{\substack{0\leq i< m_1 \\ 0\leq j< n_p}} \\
 \\
\vdots & & \vdots \\
 \\
\Big(\left\langle x^i\psi_q^{-s}(x)\,|\,y^j\varphi_1^{t}(y)\right\rangle\Big)_{\substack{0\leq i< m_q \\ 0\leq j< n_1}} & \dots & \Big(\left\langle x^i\psi_q^{-s}(x)\,|\,y^j\varphi_p^{t}(y)\right\rangle\Big)_{\substack{0\leq i< m_q \\ 0\leq j< n_p}} 
\end{array}\right).\label{determinant moment matrix}
\end{align}

\begin{theorem}[Adler, van Moerbeke, Vanhaecke ~\cite{A.V.M.V.}]\label{p+q KP}
The determinants of the block matrices $\tau_{\vec{m}\vec{n}}$ satisfy the bilinear relations\footnote{Introduce the notation $[z]=(z,\frac{z^2}{2},\frac{z^3}{3},\dots)$ for $z\in\mathbb{C}$, and let $\vec{e}_1=(1,0,0,\dots)$, $\vec{e}_2=(0,1,0,\dots)$, \dots. Only shifted times will be made explicit in the functions $\tau_{\vec{m}\vec{n}}$. The integrals are contour integrals along a small circle about $\infty$, with formal Laurent series as integrand.}
\begin{align}
\sum_{\beta=1}^{p}\oint_{\infty}(-1)^{\sigma_\beta(\vec{n})}\tau_{\vec{m},\vec{n}-\vec{e}_\beta}(t^{(\beta)}-[z^{-1}])\tau_{\vec{m}^*,\vec{n}^*+\vec{e}_\beta}(t^{(\beta)*}+[z^{-1}])e^{\sum_{k=1}^{\infty}(t^{(\beta)}_k-t^{(\beta)*}_k)z^k}z^{n_\beta-n^*_{\beta}-2}dz\notag\\
=\sum_{\alpha=1}^{q}\oint_{\infty}(-1)^{\sigma_\alpha(\vec{m})}\tau_{\vec{m}+\vec{e}_\alpha,\vec{n}}(s^{(\alpha)}-[z^{-1}])\tau_{\vec{m}^*-\vec{e}_\alpha,\vec{n}^*}(s^{(\alpha)*}+[z^{-1}])e^{\sum_{k=1}^{\infty}(s^{\alpha}_k-s^{(\alpha)*}_k)z^k}z^{m^*_\alpha-m_{\alpha}-2}dz,\notag
\end{align}
for all $\vec{m},\vec{n},\vec{m}^*,\vec{n}^*$ such that $|\vec{m}^*|=|\vec{n}^*|+1$ and $|\vec{m}|=|\vec{n}|-1$, and all $s,t,s^*,t^*\in\mathbb{C}^\infty$, and where
\begin{align}
\sigma_\alpha(\vec{m})=\sum_{\alpha'=1}^{\alpha}(m_{\alpha'}-m^*_{\alpha'}),\quad\text{and}\quad \sigma_\beta(\vec{n})=\sum_{\beta'=1}^{\beta}(n_{\beta'}-n^*_{\beta'}).\notag
\end{align}
\end{theorem}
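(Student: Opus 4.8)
The plan is to recognize each $\tau_{\vec m\vec n}$ as a Pl\"ucker coordinate of one fixed infinite-dimensional plane $W$ in a Sato-type Grassmannian, and to deduce the bilinear relations from a residue (fundamental bilinear) identity expressing the self-duality of $W$. Two facts drive the computation. First, shifting a time family acts on the weights as multiplication by a rational factor: since $\sum_{k\ge1}\frac{1}{k}(y/z)^{k}=-\log(1-y/z)$, the substitution $t^{(\beta)}\mapsto t^{(\beta)}-[z^{-1}]$ multiplies $\varphi_\beta^{t}(y)$ by $(1-y/z)$, while $t^{(\beta)*}\mapsto t^{(\beta)*}+[z^{-1}]$ multiplies $\varphi_\beta^{t^*}(y)$ by $(1-y/z)^{-1}=\sum_{j\ge0}(y/z)^{j}$, and symmetrically the $s^{(\alpha)}$-shifts act on the $\psi_\alpha^{-s}$. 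Second, the balance conditions $|\vec m|=|\vec n|-1$ and $|\vec m^*|=|\vec n^*|+1$ are precisely what is needed so that each of the four determinants appearing, once carrying its $\pm\vec e_\alpha$ or $\pm\vec e_\beta$ shift, is again the determinant of a \emph{square} truncated moment matrix of the single ambient pairing built from the fixed $\psi_\alpha,\varphi_\beta$.

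First I would rewrite the time-shifted determinants as bordered (Cramer-type) determinants of that ambient moment matrix. Expanding $(1-y/z)^{-1}=\sum_{j\ge0}y^{j}z^{-j}$ inside $\tau_{\vec m^*,\vec n^*+\vec e_\beta}(t^{(\beta)*}+[z^{-1}])$ turns the extra column of the enlarged $\beta$-block into a generating function in $z^{-1}$ whose coefficients are minors; paired with the plane-wave factor $e^{\sum_{k}(t^{(\beta)}_k-t^{(\beta)*}_k)z^{k}}$ and with the companion factor $(1-y/z)$ sitting inside $\tau_{\vec m,\vec n-\vec e_\beta}(t^{(\beta)}-[z^{-1}])$, this assembles a wave function times an adjoint-wave function for the $\beta$-component of the hierarchy. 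The small-circle integral $\oint_\infty(\cdots)\,z^{n_\beta-n^*_\beta-2}\,dz$ is then the residue that extracts the coefficient of $z^{-1}$, which, by the duality between the wave and adjoint-wave expansions of $W$, reconstructs the pairing of these two components.

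The closing step is the residue identity itself. In the fermionic formulation this is transparent: $\tau_{\vec m\vec n}$ is the bosonization of a vacuum expectation $\langle\vec m,\vec n|\,\Gamma(s,t)\,g\,|\vec m,\vec n\rangle$ for a fixed Clifford-group element $g$ encoding the moments of the pairing, and the asserted relation is nothing but the bosonization of the commutation $\Omega\,(g\otimes g)=(g\otimes g)\,\Omega$ with the diagonal Casimir $\Omega=\sum_{c=1}^{p+q}\oint\psi^{(c)}(z)\otimes\psi^{(c)*}(z)\,dz$ of the $p+q$ free-fermion fields, an identity valid for \emph{every} Clifford-group element. Splitting $\Omega$ into its $p$ fields of $\varphi$-type and its $q$ fields of $\psi$-type, and using the charge-shift constraints $|\vec m|=|\vec n|-1$, $|\vec m^*|=|\vec n^*|+1$ to sort the resulting residues into the two admissible families of charge sectors, lands the $\varphi$-contributions (the sum over $\beta$) on the left and the $\psi$-contributions (the sum over $\alpha$) on the right. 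Purely determinantally, this is a generalized Pl\"ucker (Sylvester/Jacobi) relation among the minors of the ambient moment matrix; the Grassmannian duality of $W$ promotes it from the diagonal case $s=s^*,\,t=t^*$ to arbitrary $s,t,s^*,t^*$.

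I expect the main obstacle to be the sign and index bookkeeping rather than any analytic difficulty. The factors $(-1)^{\sigma_\alpha(\vec m)}$ and $(-1)^{\sigma_\beta(\vec n)}$ are the fermionic Koszul signs produced by transporting an inserted or deleted row/column past the preceding blocks, and matching them consistently between the two families — while simultaneously tracking the powers $z^{n_\beta-n^*_\beta-2}$ and $z^{m^*_\alpha-m_\alpha-2}$ generated by the residue — is the delicate combinatorial core. Alongside this, one must check the modest but necessary point that all the contour integrals are to be read as coefficient functionals on formal Laurent series about $\infty$, so that the residue extractions are literal identities; granting that, the argument is the standard boson-fermion correspondence applied to the moment-matrix plane $W$.
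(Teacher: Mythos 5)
You should first be aware that the paper contains no proof of this statement: Theorem \ref{p+q KP} is imported verbatim, with attribution, from Adler--van Moerbeke--Vanhaecke \cite{A.V.M.V.}, so the only meaningful comparison is with that reference. There the identity is obtained within the Ueno--Takasaki framework that the present paper alludes to: the time flows act on the bi-infinite block moment matrix by multiplication by shift matrices, a Gauss/Borel-type factorization of that matrix produces the matrix wave operators, and the contour-integral identity is read off from the resulting wave functions --- i.e.\ one never leaves determinant and operator calculus on the moment matrix. Your route --- realizing each $\tau_{\vec m\vec n}$ as a matrix element of a fixed Clifford-group element $g$ and bosonizing the Casimir commutation $\Omega\,(g\otimes g)=(g\otimes g)\,\Omega$, in the style of Date--Jimbo--Kashiwara--Miwa and Kac--van de Leur --- is a genuinely different and perfectly legitimate path to the same identity; it trades the factorization analysis for representation-theoretic machinery that delivers the bilinear relation simultaneously in all charge sectors. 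Your Miwa-shift computation (the factors $(1-y/z)^{\pm 1}$) and your observation that the charge constraints $|\vec m|=|\vec n|-1$, $|\vec m^*|=|\vec n^*|+1$ keep all four determinants square are both correct.

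The genuine weakness is that the entire weight of your proof rests on a step you state in a single clause: that every $\tau_{\vec m\vec n}(s,t)$ is the bosonization of $\langle \vec m,\vec n|\,\Gamma(s,t)\,g\,|\vec m,\vec n\rangle$ for \emph{one} element $g$ of the (completed) Clifford group, independent of $(\vec m,\vec n)$ and of all times. Since the Casimir identity and the residue extraction hold for \emph{every} Clifford-group element, this identification is not a preliminary --- it is equivalent to the theorem, and it is exactly where the work lies: one must construct $g$ as the exponential of a fermion bilinear built from the moments $\langle x^i\psi_\alpha\,|\,y^j\varphi_\beta\rangle$, verify by Wick's theorem that its matrix elements in the charge sector labelled $(\vec m,\vec n)$ reproduce the block determinant \eqref{determinant moment matrix}, and check that the vertex-operator action of the $p+q$ fermion fields on those sectors produces precisely the prefactors $(-1)^{\sigma_\alpha(\vec m)}$, $(-1)^{\sigma_\beta(\vec n)}$ and the powers $z^{n_\beta-n^*_\beta-2}$, $z^{m^*_\alpha-m_\alpha-2}$. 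Separately, your closing claim that Grassmannian duality ``promotes'' the identity from the diagonal case $s=s^*$, $t=t^*$ to independent times is unnecessary and slightly confused: in the fermionic derivation the two tensor factors carry independent dressings $\Gamma(s,t)$ and $\Gamma(s^*,t^*)$ from the outset, so the general-time statement is what comes out directly. With the identification lemma actually supplied, your outline closes; as written, it describes standard machinery around the one assertion that constitutes the theorem.
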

These identities define the $(p+q)$-component KP hierarchy, as described by Ueno and Takasaki \cite{U.T.}. \\

Define the Hirota symbol between functions $f=f(t_1,t_2,\dots)$ and $g=g(t_1,t_2,\dots)$, given a polynomial $p(t_1,t_2,\dots)$, namely
\begin{align}
p\Big(\frac{\partial}{\partial t_1},\frac{\partial}{\partial t_2},\dots\Big)f\circ g:=p\Big(\frac{\partial}{\partial y_1},\frac{\partial}{\partial y_2},\dots\Big)f(t+y)g(t-y)\Big|_{y=0}.\notag
\end{align}
This operation extends readily to the case where $p(t_1,t_2,\dots)$ is a Taylor series in $t_1,t_2,\dots$. We also need the elementary Schur polynomials $s_l$, which are defined by $e^{\sum_{k=1}^{\infty}t_kz^k}:=\sum_{k=0}^{\infty}s_k(t)z^k$, for $l\geq 0$, and $s_l(t)=0$ for $l<0$. Moreover, set
\begin{align}
s_l(\tilde{\partial}_t):=s_l\Big(\frac{\partial}{\partial t_1},\frac{1}{2}\frac{\partial}{\partial t_2},\frac{1}{3}\frac{\partial}{\partial t_3},\dots\Big).\notag
\end{align}
With these notations, computing the residues about $z=\infty$ in the contour integrals above, the functions $\tau_{\vec{m}\vec{n}}$, with $|\vec{m}|=|\vec{n}|$, are found to satisfy the following PDE's :
\begin{align}
&\tau_{\vec{m}\vec{n}}^2\frac{\partial^2}{\partial t^{(\beta)}_{j+1}\partial t^{(\beta')}_{1}}\log\tau_{\vec{m}\vec{n}}=s_{j+2\delta_{\beta\beta'}}(\tilde\partial_{t^{(\beta)}})\tau_{\vec{m},\vec{n}+\vec{e}_\beta-\vec{e}_{\beta'}}\circ\tau_{\vec{m},\vec{n}-\vec{e}_\beta+\vec{e}_{\beta'}},\notag\\
&\tau_{\vec{m}\vec{n}}^2\frac{\partial^2}{\partial s^{(\alpha)}_{j+1}\partial s^{(\alpha')}_{1}}\log\tau_{\vec{m}\vec{n}}=s_{j+2\delta_{\alpha\alpha'}}(\tilde\partial_{s^{(\alpha)}})\tau_{\vec{m}-\vec{e}_\alpha+\vec{e}_{\alpha'},\vec{n}}\circ\tau_{\vec{m}+\vec{e}_\alpha-\vec{e}_{\alpha'},\vec{n}},\notag\\
&\tau_{\vec{m}\vec{n}}^2\frac{\partial^2}{\partial s^{(\alpha)}_{1}\partial t^{(\beta)}_{j+1}}\log\tau_{\vec{m}\vec{n}}=-s_{j}(\tilde\partial_{t^{(\beta)}})\tau_{\vec{m}+\vec{e}_\alpha,\vec{n}+\vec{e}_\beta}\circ\tau_{\vec{m}-\vec{e}_\alpha,\vec{n}-\vec{e}_\beta},\notag\\
&\tau_{\vec{m}\vec{n}}^2\frac{\partial^2}{\partial t^{(\beta)}_{1}\partial s^{(\alpha)}_{j+1}}\log\tau_{\vec{m}\vec{n}}=-s_{j}(\tilde\partial_{s^{(\alpha)}})\tau_{\vec{m}-\vec{e}_\alpha,\vec{n}-\vec{e}_\beta}\circ\tau_{\vec{m}+\vec{e}_\alpha,\vec{n}+\vec{e}_\beta}.\label{bilinear identities residue}
\end{align}


\subsection{An integrable deformation of the joint probability density function for the problem of non-intersecting Brownian motions}

We will now deform $P_{p,q}(E;a,b)$ defined in (\ref{normalized problem}) by adding extra time variables
\begin{align}
t^{(1)}=\big(t_1^{(1)},t_2^{(1)},\dots\big),\quad t^{(2)}=\big(t_1^{(2)},t_2^{(2)},\dots\big),\quad\dots\quad,\quad t^{(p)}=\big(t_1^{(p)},t_2^{(p)},\dots\big),\notag\\
s^{(1)}=\big(s_1^{(1)},s_2^{(1)},\dots\big),\quad s^{(2)}=\big(s_1^{(2)},s_2^{(2)},\dots\big),\quad\dots\quad,\quad s^{(q)}=\big(s_1^{(q)},s_2^{(q)},\dots\big),\notag
\end{align}
and auxiliary variables
\begin{align}
(\alpha_1,\dots,\alpha_q),\quad (\beta_1,\dots,\beta_p),\notag
\end{align}
such that $\sum_{i=1}^{q}\alpha_i=\sum_{j=1}^{p}\beta_j=0$. First set
\begin{align}
&\psi_i^{-s}(x):=e^{a_ix+\alpha_ix^2-\sum_{j=1}^{\infty}s_j^{(i)}x^j},\quad1\leq i\leq q,\notag\\
&\varphi_i^{t}(x):=e^{b_ix+\beta_ix^2+\sum_{j=1}^{\infty}t_j^{(i)}x^j},\quad1\leq i\leq p.\notag
\end{align}
We define
\begin{align}
P_{p,q}\big(E;a,b;(t,s),(\alpha,\beta)\big)&=\frac{\tau_{\vec{m},\vec{n}}^E(t,s;\alpha,\beta;a,b)}{\tau_{\vec{m},\vec{n}}^\mathbb{R}(t,s;\alpha,\beta;a,b)},\label{deformed normalized problem}
\end{align}
with
\begin{align}
\tau_{\vec{m},\vec{n}}^E(t,s;\alpha,\beta;a,b):=\frac{1}{N!}\int_{E^N}\Big(\prod_{i=1}^{N}e^{\frac{-x_i^2}{2}}\,dx_i\Big)\,\det\big[\tilde{\psi}_i^{-s}(x_j)\big]_{1\leq i,j\leq N}\,\det\big[\tilde{\varphi}_i^t(x_j)\big]_{1\leq i,j\leq N},\label{tau}
\end{align}
\sloppy{where $(t,s)=\big(t^{(1)},\dots,t^{(p)};s^{(1)},\dots,s^{(q)}\big)$, $(\alpha,\beta)=(\alpha_1,\dots,\alpha_{q-1};\beta_1,\dots,\beta_{p-1})$, $(a,b)=(a_1,\dots,a_{q-1};b_1,\dots,b_{p-1})$, and}
\begin{align}
\big(\tilde{\psi}^{-s}_1(x),\dots,\tilde{\psi}^{-s}_N(x)\big):=&\Big(\psi^{-s}_1(x),x\psi^{-s}_1(x),\dots,x^{m_1-1}\psi^{-s}_1(x),\dots,\notag\\
&\phantom{\Big(\psi^{-s}_1(x)}\psi^{-s}_q(x),x\psi^{-s}_q(x),\dots,x^{m_q-1}\psi^{-s}_q(x)\Big),\notag\\
\big(\tilde{\varphi}^t_1(x),\dots,\tilde{\varphi}^t_N(x)\big):=&\Big(\varphi^{t}_1(x),x\varphi^{t}_1(x),\dots,x^{n_1-1}\varphi^{t}_1(x),\dots,\notag\\
&\phantom{\Big(\varphi^{t}_1(x)}\varphi^{t}_p(x),x\varphi^{t}_p(x),\dots,x^{n_p-1}\varphi^{t}_p(x)\Big).\notag
\end{align}
Observe that $P_{p,q}(E;a,b)=P_{p,q}\big(E;a,b;(t,s),(\alpha,\beta)\big)\big|_{\mathcal{L}}$, where $\mathcal{L}=\{(t,s)=(0,0),\alpha=\beta=0\}$. By virtue of proposition \ref{proposition 1} we have
\begin{align}
\tau_{\vec{m},\vec{n}}^E(t&,s;\alpha,\beta;a,b)=\det\Bigg[\int_{E}\tilde{\psi}^{-s}_i(x)\tilde{\varphi}^t_j(x)e^{\frac{-x^2}{2}}dx\Bigg]_{1\leq i,j\leq N}\notag\\
&=\det\left(\begin{array}{ccc}
\Big(\int_{E}x^{i+j}\psi_1^{-s}\varphi_1^{t}e^{-\frac{x^2}{2}}dx\Big)_{\substack{0\leq i< m_1 \\ 0\leq j< n_1}} & \dots & \Big(\int_{E}x^{i+j}\psi_1^{-s}\varphi_p^{t}e^{-\frac{x^2}{2}}dx\Big)_{\substack{0\leq i< m_1 \\ 0\leq j< n_p}} \\
 \\
\vdots & & \vdots \\
 \\
\Big(\int_{E} x^{i+j}\psi_q^{-s}\varphi_1^{t}e^{-\frac{x^2}{2}}dx\Big)_{\substack{0\leq i< m_q \\ 0\leq j< n_1}} & \dots & \Big(\int_{E}x^{i+j}\psi_q^{-s}\varphi_p^{t}e^{-\frac{x^2}{2}}dx\Big)_{\substack{0\leq i< m_q \\ 0\leq j< n_p}} 
\end{array}\right),\label{deformed probability as a determinant}
\end{align}
where for simplicity we have left out the dependence of $\varphi_i^{-s}$ and $\psi_j^{t}$ on $x$. We have also
\begin{align}
\tau_{\vec{m},\vec{n}}^E(t,s;\alpha,\beta;a,b)=&\frac{1}{\prod_{i=1}^{q}m_i!\prod_{j=1}^{p}n_j!}\int_{E^N}\Big(\Delta_{m_1}(x^{(1)})\prod_{i=1}^{m_1}\psi^{-s}_1(x_{i})e^{\frac{-x_i^2}{2}}\,dx_i\Big)\times\dots\,\times\notag\\
&\Big(\Delta_{m_q}(x^{(q)})\prod_{i=m_1+\dots+m_{q-1}+1}^{m_1+\dots+m_q}\psi^{-s}_q(x_{i})e^{\frac{-x_i^2}{2}}\,dx_i\Big)\notag\\
&\times\sum_{\sigma\in S_N}(-1)^{\sigma}\Big[\Big(\Delta_{n_1}(x_{\sigma(1)},\dots,x_{\sigma(n_1)})\prod_{i=1}^{n_1}\varphi^{t}_1(x_{\sigma(i)})\Big)\times\dots\times\notag\\
&\Big(\Delta_{n_p}(x_{\sigma(n_1+\dots+n_{p-1}+1)},\dots,x_{\sigma(n_1+\dots+n_p)})\prod_{i=n_1+\dots+n_{p-1}+1}^{n_1+\dots+n_p}\varphi^{t}_p(x_{\sigma(i)})\Big)\Big].\label{deformed probability as a integral with vandermonde determinants}
\end{align}

The determinant of the moment matrix (\ref{determinant moment matrix}) with regard to the inner product $\left\langle f,g\right\rangle=\int_Ef(z)g(z)e^{-z^2/2}dz$, with
\begin{align}
&\psi_i(x):=e^{a_ix+\alpha_ix^2},\quad1\leq i\leq q,&\varphi_j(x):=e^{b_jx+\beta_jx^2},\quad1\leq j\leq p,\notag
\end{align}
is the same as the determinant (\ref{deformed probability as a determinant}). Therefore, by virtue of Theorem \ref{p+q KP}, $\tau_{\vec{m},\vec{n}}^E(t,s;\alpha,\beta;a,b)$ satisfies the $(p+q)$-component KP hierarchy.

\begin{corollary}\label{corollary bilinear identities}
The function $\tau_{\vec{m},\vec{n}}^E(t,s;\alpha,\beta;a,b)$ satisfies the following identities, $1\leq k,k'\leq q$, $1\leq l,l'\leq p$, $k\neq k'$, $l\neq l'$,
\begin{align}
&\frac{\partial}{\partial t^{(l)}_1}\,\ln\frac{\tau_{\vec{m},\vec{n}+\vec{e}_l-\vec{e}_{l'}}^E}{\tau_{\vec{m},\vec{n}-\vec{e}_l+\vec{e}_{l'}}^E}=\frac{\frac{\partial^2}{\partial t^{(l)}_2\partial t^{(l')}_1}\,\ln\tau_{\vec{m},\vec{n}}^E}{\frac{\partial^2}{\partial t^{(l)}_1\partial t^{(l')}_1}\,\ln\tau_{\vec{m},\vec{n}}^E},&\frac{\partial}{\partial s^{(k)}_1}\,\ln\frac{\tau_{\vec{m}-\vec{e}_k+\vec{e}_{k'},\vec{n}}^E}{\tau_{\vec{m}+\vec{e}_k-\vec{e}_{k'},\vec{n}}^E}=\frac{\frac{\partial^2}{\partial s^{(k)}_2\partial s^{(k')}_1}\,\ln\tau_{\vec{m},\vec{n}}^E}{\frac{\partial^2}{\partial s^{(k)}_1\partial s^{(k')}_1}\,\ln\tau_{\vec{m},\vec{n}}^E},\notag\\
&\frac{\partial}{\partial t^{(l)}_1}\,\ln\frac{\tau_{\vec{m}+\vec{e}_k,\vec{n}+\vec{e}_l}^E}{\tau_{\vec{m}-\vec{e}_k,\vec{n}-\vec{e}_l}^E}=\frac{\frac{\partial^2}{\partial t^{(l)}_2\partial s^{(k)}_1}\,\ln\tau_{\vec{m},\vec{n}}^E}{\frac{\partial^2}{\partial t^{(l)}_1\partial s^{(k)}_1}\,\ln\tau_{\vec{m},\vec{n}}^E},&\frac{\partial}{\partial s^{(k)}_1}\,\ln\frac{\tau_{\vec{m}-\vec{e}_k,\vec{n}-\vec{e}_l}^E}{\tau_{\vec{m}+\vec{e}_k,\vec{n}+\vec{e}_l}^E}=\frac{\frac{\partial^2}{\partial s^{(k)}_2\partial t^{(l)}_1}\,\ln\tau_{\vec{m},\vec{n}}^E}{\frac{\partial^2}{\partial s^{(k)}_1\partial t^{(l)}_1}\,\ln\tau_{\vec{m},\vec{n}}^E}.\label{bilinear}
\end{align}
\end{corollary}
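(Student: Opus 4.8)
The plan is to read off the four identities in (\ref{bilinear}) from the four PDE's (\ref{bilinear identities residue}) of the $(p+q)$-component KP hierarchy, by specializing each of them at the two lowest indices $j=0$ and $j=1$ and then taking the quotient. As observed just before the statement, the determinant (\ref{deformed probability as a determinant}) equals the moment-matrix determinant (\ref{determinant moment matrix}) for the inner product $\langle f,g\rangle=\int_E f(z)g(z)e^{-z^2/2}\,dz$ and the weights $\psi_i(x)=e^{a_ix+\alpha_ix^2}$, $\varphi_j(x)=e^{b_jx+\beta_jx^2}$; hence Theorem \ref{p+q KP} applies to $\tau^E_{\vec m,\vec n}$, and so do the relations (\ref{bilinear identities residue}) with $\tau_{\vec m\vec n}$ replaced by $\tau^E_{\vec m,\vec n}$. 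The only inputs about Schur polynomials I need are $s_0=1$ and $s_1(t)=t_1$, which give $s_0(\tilde\partial)=1$ and $s_1(\tilde\partial_{t^{(\beta)}})=\partial/\partial t^{(\beta)}_1$, so that through the Hirota symbol
\[
s_0(\tilde\partial)\,f\circ g=fg,\qquad s_1(\tilde\partial_{t^{(\beta)}})\,f\circ g=\frac{\partial f}{\partial t^{(\beta)}_1}\,g-f\,\frac{\partial g}{\partial t^{(\beta)}_1},
\]
and similarly with $t^{(\beta)}$ replaced by $s^{(\alpha)}$.

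Consider the first PDE of (\ref{bilinear identities residue}) with $\beta=l$ and $\beta'=l'$. Since the corollary assumes $l\neq l'$, the Kronecker symbol $\delta_{\beta\beta'}$ vanishes and the right-hand side carries $s_j$ rather than $s_{j+2}$. The instance $j=0$ therefore reads
\[
(\tau^E_{\vec m,\vec n})^2\,\frac{\partial^2}{\partial t^{(l)}_1\partial t^{(l')}_1}\log\tau^E_{\vec m,\vec n}=\tau^E_{\vec m,\vec n+\vec e_l-\vec e_{l'}}\,\tau^E_{\vec m,\vec n-\vec e_l+\vec e_{l'}},
\]
while $j=1$ reads
\[
(\tau^E_{\vec m,\vec n})^2\,\frac{\partial^2}{\partial t^{(l)}_2\partial t^{(l')}_1}\log\tau^E_{\vec m,\vec n}=\frac{\partial\tau^E_{\vec m,\vec n+\vec e_l-\vec e_{l'}}}{\partial t^{(l)}_1}\,\tau^E_{\vec m,\vec n-\vec e_l+\vec e_{l'}}-\tau^E_{\vec m,\vec n+\vec e_l-\vec e_{l'}}\,\frac{\partial\tau^E_{\vec m,\vec n-\vec e_l+\vec e_{l'}}}{\partial t^{(l)}_1}.
\]
Dividing the second display by the first cancels the factor $(\tau^E_{\vec m,\vec n})^2$ on the left and the product $\tau^E_{\vec m,\vec n+\vec e_l-\vec e_{l'}}\tau^E_{\vec m,\vec n-\vec e_l+\vec e_{l'}}$ on the right, leaving precisely
\[
\frac{\frac{\partial^2}{\partial t^{(l)}_2\partial t^{(l')}_1}\log\tau^E_{\vec m,\vec n}}{\frac{\partial^2}{\partial t^{(l)}_1\partial t^{(l')}_1}\log\tau^E_{\vec m,\vec n}}=\frac{\partial}{\partial t^{(l)}_1}\log\frac{\tau^E_{\vec m,\vec n+\vec e_l-\vec e_{l'}}}{\tau^E_{\vec m,\vec n-\vec e_l+\vec e_{l'}}},
\]
which is the first identity of (\ref{bilinear}).

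The remaining three identities follow verbatim from the second, third and fourth PDE's of (\ref{bilinear identities residue}): the hypothesis $k\neq k'$ kills $\delta_{\alpha\alpha'}$ in the second PDE, and the two mixed PDE's already carry a bare $s_j$ (their common sign $-$ cancels in the quotient). In every case the $j=0$ instance is a plain product of two shifted tau functions, which cancels against the first-order Hirota expression coming from $j=1$ upon division. I expect no genuine obstacle here; the one point demanding attention — and the reason for imposing $k\neq k'$ and $l\neq l'$ — is exactly that the $j=0$ right-hand side reduces to an unadorned product, which requires $s_0(\tilde\partial)=1$ and hence fails for the diagonal terms $\beta=\beta'$ (resp.\ $\alpha=\alpha'$), where $s_{j+2\delta_{\beta\beta'}}$ would instead contribute $s_2$. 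Thus the corollary is a direct algebraic consequence of the hierarchy relations (\ref{bilinear identities residue}).
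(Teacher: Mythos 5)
Your proof is correct and is essentially identical to the paper's: both specialize the first family of relations \eqref{bilinear identities residue} at $j=0$ and $j=1$ using $s_0=1$, $s_1(t)=t_1$, take the ratio so that the shifted tau functions combine into a logarithmic derivative, and note the remaining identities follow in the same way. Your added remark explaining why $l\neq l'$ (resp.\ $k\neq k'$) is needed — to avoid the $s_{j+2}$ term coming from $\delta_{\beta\beta'}$ — is a nice touch the paper leaves implicit, but it is the same argument.
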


\begin{proof}
We shall only give the proof of the first identity. The two first elementary Schur polynomials are given by
\begin{align*}
s_0(x_1,x_2,\dots)=1,\qquad s_1(x_1,x_2,\dots)=x_1.
\end{align*}
Consequently, the first equation in \eqref{bilinear identities residue} with $j=0$ and $l\neq l'$ gives
\begin{align}
\big(\tau_{\vec{m}\vec{n}}^E\big)^2\frac{\partial^2}{\partial t^{(l)}_{1}\partial t^{(l')}_{1}}\log\tau_{\vec{m}\vec{n}}^E&=s_{0}(\tilde\partial_{t^{(l)}})\tau_{\vec{m},\vec{n}+\vec{e}_l-\vec{e}_{l'}}^E\circ\tau_{\vec{m},\vec{n}-\vec{e}_l+\vec{e}_{l'}}^E=\tau_{\vec{m},\vec{n}+\vec{e}_l-\vec{e}_{l'}}^E\,\tau_{\vec{m},\vec{n}-\vec{e}_l+\vec{e}_{l'}}^E,\label{useful formula multi-kp 1}
\end{align}
while for $j=1$ and $l\neq l'$ it gives
\begin{align}
\big(\tau_{\vec{m}\vec{n}}^E\big)^2\frac{\partial^2}{\partial t^{(l)}_{2}\partial t^{(l')}_{1}}\log\tau_{\vec{m}\vec{n}}^E&=s_{1}(\tilde\partial_{t^{(l)}})\tau_{\vec{m},\vec{n}+\vec{e}_l-\vec{e}_{l'}}^E\circ\tau_{\vec{m},\vec{n}-\vec{e}_l+\vec{e}_{l'}}^E\notag\\
&=\tau_{\vec{m},\vec{n}-\vec{e}_l+\vec{e}_{l'}}^E\frac{\partial}{\partial t_1^{(l)}}\tau_{\vec{m},\vec{n}+\vec{e}_l-\vec{e}_{l'}}^E-\tau_{\vec{m},\vec{n}+\vec{e}_l-\vec{e}_{l'}}^E\frac{\partial}{\partial t_1^{(l)}}\tau_{\vec{m},\vec{n}-\vec{e}_l+\vec{e}_{l'}}^E.\label{useful formula multi-kp 2}
\end{align}
Taking the ratio of \eqref{useful formula multi-kp 2} and \eqref{useful formula multi-kp 1} yields the first formula of Corollary \ref{corollary bilinear identities}. The other identities are obtained in a similar way.
\end{proof}


\section{Virasoro constraints}

Let us introduce the following differential operators
\begin{align}
\mathbb{J}^{(1)}_{m,k}(t)=&\frac{\partial}{\partial t_m}+(-m)t_{-m}+k\,\delta_{0,m},\notag\\
\mathbb{J}^{(2)}_{m,k}(t)=&\frac{1}{2}\Big(\sum_{i+j=m}\frac{\partial^2}{\partial t_i\partial t_j}+2\sum_{i\geq 1}it_i\frac{\partial}{\partial t_{i+m}}+\sum_{i+j=-m}it_ijt_j\Big)\notag\\
&\qquad\qquad+\Big(k+\frac{m+1}{2}\Big)\Big(\frac{\partial}{\partial t_m}+(-m)t_{-m}\Big)+\frac{k(k+1)}{2}\,\delta_{m,0}.\notag
\end{align}
Those operators satisfy the Heisenberg and Virasoro algebra respectively 
\begin{align}
&\big[\mathbb{J}^{(1)}_{k,n}(t),\mathbb{J}^{(1)}_{l,n}(t)\big]=k\,\delta_{k,-l},&\big[\mathbb{J}^{(2)}_{k,n}(t),\mathbb{J}^{(2)}_{l,n}(t)\big]=(k-l)\mathbb{J}^{(2)}_{k+l,n}-\Big(\frac{k^3-k}{6}\Big)\,\delta_{k,-l},\notag
\end{align}
and interact as follows
\begin{align}
\big[\mathbb{J}^{(2)}_{k,n}(t),\mathbb{J}^{(1)}_{l,n}(t)\big]=-l\,\mathbb{J}^{(1)}_{k+l,n}(t)+\frac{k(k+1)}{2}\delta_{k,-l}.\notag
\end{align}

We have the following lemma, proven by Adler and van Moerbeke ~\cite{A.V.M.3}.
\begin{lemma}[Adler-van Moerbeke ~\cite{A.V.M.3}]\label{simple variationnal equation}
Given $\rho(z)=e^{-V(z)}$, with
\begin{align}
-\frac{\rho'(z)}{\rho(z)}=V'(z)=\frac{g(z)}{f(z)}=\frac{\sum_{i=0}^{\infty}\nu_iz^i}{\sum_{i=0}^{\infty}\mu_iz^i},\notag
\end{align}
the integrand
\begin{align}
dI_N(z;t):=\Delta_N(z)\,\prod_{k=1}^{N}\Big(e^{\sum_{i=1}^{\infty}t_iz_k^i}\rho(z_k)dz_k\Big)\notag
\end{align}
satisfies the variational formula
\begin{align}
\frac{d}{d\epsilon}dI_N\big(z_i\mapsto z_i+\epsilon f(z_i)z_i^{k+1};t\big)\Big|_{\epsilon=0}=\sum_{l=0}^{\infty}\Big(\mu_l\,\mathbb{J}_{k+l,N}^{(2)}(t)-\nu_l\,\mathbb{J}_{k+l+1,N}^{(1)}(t)\Big)dI_N(z;t),\notag
\end{align}
for each $k\geq-1$. The contribution of the factor $\prod_{i=1}^{N}dz_i$ in this equation is 
\begin{align}
\sum_{l=0}^{\infty}\mu_l\,(l+k+1)\,\mathbb{J}_{k+l,N}^{(1)}\,dI_N(z;t).\notag
\end{align}
\end{lemma}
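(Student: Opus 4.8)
\emph{Setup.} The plan is to exploit that $dI_N$ is a product and to differentiate by the logarithmic (Leibniz) rule. Write
\begin{align}
dI_N(z;t)=\Delta_N(z)\,\prod_{a=1}^{N}e^{\sum_{i\geq1}t_iz_a^i}\,\prod_{a=1}^{N}\rho(z_a)\,\prod_{a=1}^{N}dz_a,\notag
\end{align}
and put $\delta z_a:=f(z_a)z_a^{k+1}$, so that under $z_a\mapsto z_a+\epsilon\,\delta z_a$ the quantity $\frac{d}{d\epsilon}\big|_{\epsilon=0}dI_N$ equals $dI_N$ times the sum of the four logarithmic derivatives coming from the Vandermonde, the exponential, the weight $\prod\rho$, and the Lebesgue measure $\prod dz_a$. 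Throughout I abbreviate the power sums by $p_m:=\sum_{a=1}^{N}z_a^m$ (so $p_0=N$), and I use repeatedly that $\frac{\partial}{\partial t_m}dI_N=p_m\,dI_N$ for $m\geq1$; this is the mechanism by which every power sum produced below is converted back into an operator acting on $dI_N$.

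\emph{The four contributions.} I would compute, to first order: (i) from $\Delta_N$, the term $\sum_{a\neq b}\frac{\delta z_a}{z_a-z_b}=\sum_{l\geq0}\mu_l\sum_{a\neq b}\frac{z_a^{k+l+1}}{z_a-z_b}$, which I symmetrize in $a\leftrightarrow b$ via $\frac{z_a^{m+1}-z_b^{m+1}}{z_a-z_b}=\sum_{i+j=m}z_a^iz_b^j$ to obtain $\tfrac12\sum_{l}\mu_l\big(\sum_{i+j=k+l}p_ip_j-(k+l+1)p_{k+l}\big)$; (ii) from the exponential, $\sum_{a}\sum_{i\geq1}i\,t_i z_a^{i-1}\delta z_a=\sum_l\mu_l\sum_{i\geq1}i\,t_i\,p_{i+k+l}$; (iii) from the weight, $\sum_a\frac{\rho'(z_a)}{\rho(z_a)}\delta z_a=-\sum_a V'(z_a)f(z_a)z_a^{k+1}=-\sum_a g(z_a)z_a^{k+1}=-\sum_l\nu_l\,p_{k+l+1}$, where the decisive cancellation is that multiplying by $f$ clears the denominator of $V'=g/f$ — this is the very reason the vector field carries the factor $f(z)z^{k+1}$; (iv) from the Jacobian of $\prod dz_a$, $\sum_a\frac{d}{dz_a}\big(f(z_a)z_a^{k+1}\big)=\sum_l(l+k+1)\mu_l\,p_{k+l}$.

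\emph{Reassembly.} Setting $m=k+l$, I would recognize the sum of (i), (ii), (iv) as $\sum_l\mu_l\,\mathbb{J}^{(2)}_{k+l,N}(t)$ acting on $dI_N$, and (iii) as $-\sum_l\nu_l\,\mathbb{J}^{(1)}_{k+l+1,N}(t)$ acting on $dI_N$, giving the claimed formula; moreover (iv) alone is exactly $\sum_l(l+k+1)\mu_l\,\mathbb{J}^{(1)}_{k+l,N}\,dI_N$, which is the second assertion. Concretely, the $\partial^2$ part of $\mathbb{J}^{(2)}_{m,N}$ reproduces $\tfrac12\sum_{i+j=m,\,i,j\geq1}p_ip_j$, the $\sum it_i\partial_{i+m}$ part reproduces (ii), while (iv) combines with the $-\tfrac12(m+1)p_m$ remnant of the symmetrization in (i) to build the linear coefficient $\big(N+\tfrac{m+1}{2}\big)$ in front of $p_m$.

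\emph{Main obstacle.} The delicate point — and the step I would treat most carefully — is the boundary bookkeeping for small indices, where the naive identity $\frac{\partial}{\partial t_r}dI_N=p_r\,dI_N$ is unavailable because there is no time $t_r$ for $r\leq0$. The boundary terms in (i) carrying a factor $p_0=N$, together with the value $p_0=N$ needed in (ii)–(iii) against an absent $\partial_0$, must be matched against the linear terms $(-m)t_{-m}$ and $k\,\delta_{0,m}$ and the central terms $\frac{k(k+1)}{2}\delta_{m,0}$, $\frac{N(N+1)}{2}\delta_{m,0}$ built into $\mathbb{J}^{(1)}_{\cdot,N}$ and $\mathbb{J}^{(2)}_{\cdot,N}$. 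Checking the two genuinely special cases $m=k+l=-1$ (only $k=-1,l=0$, where the Vandermonde term vanishes and the missing $\partial_0$ in $\sum it_i\partial_{i-1}$ is compensated by the linear $N t_1$) and $m=0$ (where (i)$+$(iv) collapse precisely to $\tfrac{N(N+1)}{2}$) confirms that the $\delta$-terms in the definitions of $\mathbb{J}^{(1)},\mathbb{J}^{(2)}$ are exactly what is required, so the identity holds for every $k\geq-1$.
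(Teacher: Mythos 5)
Your proposal is correct: the four logarithmic-derivative contributions (Vandermonde, exponential, weight, Jacobian) are computed accurately, the cancellation $V'f=g$ is used exactly where it matters, and your bookkeeping of the boundary cases $k+l=-1$ and $k+l=0$ against the $\delta$-terms and central terms of $\mathbb{J}^{(1)}_{\cdot,N}$, $\mathbb{J}^{(2)}_{\cdot,N}$ checks out. Note that the paper itself gives no proof of this lemma --- it is quoted from Adler--van Moerbeke \cite{A.V.M.3} --- and your argument is essentially the canonical one from that reference: vary each factor of $dI_N$, convert power sums $p_m$ into $\partial/\partial t_m$ acting on $dI_N$, and recognize the resulting expressions as the Heisenberg and Virasoro operators.
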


We define, for a given permutation $\sigma\in S_n$, the integrands
\begin{align}
dI_{\vec{m},\vec{n}}^\sigma\big(x;(t,s)\big)=&\Big(\Delta_{m_1}(x^{(1)})\prod_{i=1}^{m_1}\psi^{-s^{(1)}}_1(x_{i})e^{\frac{-x_i^2}{2}}\,dx_i\Big)\times\dots\,\times\notag\\
&\Big(\Delta_{m_q}(x^{(q)})\prod_{i=m_1+\dots+m_{q-1}+1}^{m_1+\dots+m_q}\psi^{-s^{(q)}}_q(x_{i})e^{\frac{-x_i^2}{2}}\,dx_i\Big)\notag\\
&\times\Big(\Delta_{n_1}(x_{\sigma(1)},\dots,x_{\sigma(n_1)})\prod_{i=1}^{n_1}\varphi^{t^{(1)}}_1(x_{\sigma(i)})\Big)\times\dots\times\notag\\
&\Big(\Delta_{n_p}(x_{\sigma(n_1+\dots+n_{p-1}+1)},\dots,x_{\sigma(n_1+\dots+n_p)})\prod_{i=n_1+\dots+n_{p-1}+1}^{n_1+\dots+n_p}\varphi^{t^{(p)}}_p(x_{\sigma(i)})\Big).\label{integrand}
\end{align}
We are looking for a variational equation for 
\begin{align}
dI_{\vec{m},\vec{n}}^\sigma\Big(x_i\mapsto x_i+\epsilon x_i^{k+1};(t,s)\Big).\notag
\end{align}
We have the following lemma.
\begin{lemma}\label{complete variationnal equation}
The integrand $dI_{\vec{m},\vec{n}}^\sigma\big(x;(t,s)\big)$ as defined in (\ref{integrand}), satisfies the following variational equation for each $\sigma\in S_N$ and $k\geq-1$
\begin{align}
\frac{d}{d\epsilon}dI_{\vec{m},\vec{n}}^\sigma\Big(x_i\mapsto x_i+\epsilon x_i^{k+1};(t,s)\Big)\Big|_{\epsilon=0}=\mathbb{V}_k^{\vec{m},\vec{n}}\big(dI_{\vec{m},\vec{n}}^\sigma\big),\notag
\end{align}
with
\begin{align}
\mathbb{V}_k^{\vec{m},\vec{n}}:=&\sum_{i=1}^{q}\Big[\mathbb{J}_{k,m_i}^{(2)}(-s^{(i)})+a_i\,\mathbb{J}_{k+1,m_i}^{(1)}(-s^{(i)})-\big(1-2\alpha_i\big)\,\mathbb{J}_{k+2,m_i}^{(1)}(-s^{(i)})\Big]\notag\\
&+\sum_{i=1}^{p}\Big[\mathbb{J}_{k,n_i}^{(2)}(t^{(i)})+b_i\,\mathbb{J}_{k+1,n_i}^{(1)}(t^{(i)})+2\beta_i\,\mathbb{J}_{k+2,n_i}^{(1)}(t^{(i)})-(k+1)\,\mathbb{J}_{k,n_i}^{(1)}(t^{(i)})\Big]
.\label{Virasoro operator}
\end{align}
\end{lemma}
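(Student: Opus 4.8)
The plan is to reduce the computation for the full integrand $dI^\sigma_{\vec m,\vec n}$ to the single-Vandermonde case already handled in Lemma \ref{simple variationnal equation}. The key observation is that the scaling substitution $x_i\mapsto x_i+\epsilon x_i^{k+1}$ acts on each variable independently, so I would first establish that the derivative at $\epsilon=0$ obeys a Leibniz-type rule: the integrand $dI^\sigma_{\vec m,\vec n}$ is a product of $q$ factors built from the weights $\psi_i$ (each a Vandermonde times exponentials) and $p$ factors built from the weights $\varphi_j$, and the variation of a product is the sum of the variations of each factor. Because the substitution is applied simultaneously to \emph{all} variables but each factor depends only on its own block of variables, the total variation splits as a sum over the $p+q$ blocks.

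\textbf{Applying the base lemma to each block.} For each of the $q$ groups coming from the $\psi_i$, the relevant factor has exactly the form $dI_N(z;t)$ appearing in Lemma \ref{simple variationnal equation}, with $N$ replaced by $m_i$, with time variables $-s^{(i)}$ (note the sign convention in $\psi^{-s}_i$), and with weight $\rho(x)=\psi_i(x)e^{-x^2/2}=e^{a_ix+\alpha_ix^2-x^2/2}$. For this weight one computes
\begin{align}
V'(x)=-\frac{\rho'(x)}{\rho(x)}=x-a_i-2\alpha_i x=(1-2\alpha_i)x-a_i,\notag
\end{align}
so that in the notation of the lemma $f(x)=1$ (i.e. $\mu_0=1$, all other $\mu_l=0$) and $g(x)=-a_i+(1-2\alpha_i)x$ (i.e. $\nu_0=-a_i$, $\nu_1=1-2\alpha_i$, all higher $\nu_l=0$). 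Substituting these into the conclusion of Lemma \ref{simple variationnal equation}, the single finite sum over $l$ collapses to the three terms
\begin{align}
\mathbb{J}^{(2)}_{k,m_i}(-s^{(i)})+a_i\,\mathbb{J}^{(1)}_{k+1,m_i}(-s^{(i)})-(1-2\alpha_i)\,\mathbb{J}^{(1)}_{k+2,m_i}(-s^{(i)}),\notag
\end{align}
which is precisely the $i$-th summand in the first line of $\mathbb{V}_k^{\vec m,\vec n}$. The same computation for each $\varphi_j$-block uses $\rho(x)=e^{b_jx+\beta_jx^2-x^2/2}$ and time variables $t^{(j)}$ (with positive sign), giving $V'(x)=(1-2\beta_j)x-b_j$. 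Here one must be careful: the $\varphi$-factors in \eqref{integrand} carry \emph{no} Gaussian factor $e^{-x_i^2/2}$ of their own (those weights sit entirely in the $\psi$-blocks), so the contribution of the measure $\prod dx_i$ must be attributed to the $\psi$-blocks alone, and the $(k+1)\mathbb{J}^{(1)}_{k,n_i}$ correction term on the $\varphi$-side arises to compensate for this asymmetric placement — this is exactly the ``contribution of the factor $\prod dz_i$'' clause of Lemma \ref{simple variationnal equation}.

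\textbf{The main obstacle: the role of $\sigma$ and bookkeeping of the measure.} The delicate point — and the step I expect to require the most care — is that the $\varphi$-Vandermondes are evaluated at \emph{permuted} arguments $x_{\sigma(i)}$, whereas the scaling $x_i\mapsto x_i+\epsilon x_i^{k+1}$ is indexed by the original labels. I would verify that because the substitution is the \emph{same} monomial rule applied uniformly to every variable, it commutes with the relabeling by $\sigma$: varying $x_{\sigma(i)}$ and then recognizing the block structure gives the identical Virasoro operator regardless of $\sigma$, which is precisely why the final operator $\mathbb{V}_k^{\vec m,\vec n}$ carries no $\sigma$-dependence. Concretely, the Vandermonde $\Delta_{n_j}$ in the permuted variables is still a Vandermonde in those $n_j$ variables, and applying Lemma \ref{simple variationnal equation} to it produces operators in $t^{(j)}$ that are insensitive to which physical indices the block occupies. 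The one genuine asymmetry to track is the placement of $e^{-x_i^2/2}\,dx_i$: since each variable appears in exactly one $\psi$-block (carrying its Gaussian and its $dx_i$) and exactly one $\varphi$-block (carrying neither), the potential $V$ seen by a $\psi$-block includes the full $x^2/2$ plus $\prod dx_i$ while the $\varphi$-block sees only $\beta_j x^2$ without measure; reconciling these two accountings of the same geometric substitution is what produces the extra $-(k+1)\mathbb{J}^{(1)}_{k,n_i}$ term and fixes all the signs. Once this bookkeeping is confirmed, summing the $p+q$ block contributions yields \eqref{Virasoro operator} and completes the proof.
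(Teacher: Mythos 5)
Your overall strategy coincides with the paper's one-line proof: decompose the integrand \eqref{integrand} into its $q$ $\psi$-blocks and $p$ $\varphi$-blocks, apply Lemma \ref{simple variationnal equation} to each factor, and add the contributions by the Leibniz rule; your $\psi$-block computation and your remarks on $\sigma$-independence are correct. However, your $\varphi$-block computation contains a genuine error. You take the weight of the $\varphi_j$-block to be $\rho(x)=e^{b_jx+\beta_jx^2-x^2/2}$, so that $V'(x)=(1-2\beta_j)x-b_j$, i.e.\ $\nu_0=-b_j$, $\nu_1=1-2\beta_j$. But, as you yourself observe two sentences later, the $\varphi$-factors in \eqref{integrand} carry no Gaussian: every factor $e^{-x_i^2/2}$ sits in a $\psi$-block, so your assignment counts each Gaussian twice. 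The correct weight is $\rho(x)=e^{b_jx+\beta_jx^2}$, giving $V'(x)=-b_j-2\beta_jx$, i.e.\ $\nu_0=-b_j$, $\nu_1=-2\beta_j$, and hence, from the main formula of Lemma \ref{simple variationnal equation},
\begin{align}
\mathbb{J}^{(2)}_{k,n_j}(t^{(j)})+b_j\,\mathbb{J}^{(1)}_{k+1,n_j}(t^{(j)})+2\beta_j\,\mathbb{J}^{(1)}_{k+2,n_j}(t^{(j)}),\notag
\end{align}
which is exactly where the coefficient $+2\beta_j$ in \eqref{Virasoro operator} comes from. With your $\nu_1=1-2\beta_j$ that coefficient would instead be $-(1-2\beta_j)$, a discrepancy of $-\mathbb{J}^{(1)}_{k+2,n_j}(t^{(j)})$, and no bookkeeping of the measure can absorb it: the ``contribution of the factor $\prod dz_i$'' clause of Lemma \ref{simple variationnal equation} concerns only the Jacobian of the substitution $x_i\mapsto x_i+\epsilon x_i^{k+1}$ and produces (for $\mu_0=1$, all other $\mu_l=0$) the operator $(k+1)\,\mathbb{J}^{(1)}_{k,n_j}$, whose index is $k$, not $k+2$.

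In other words, two distinct corrections are needed on the $\varphi$-side, and both are necessary: (i) drop the Gaussian from the weight, which changes $\nu_1$ from $1-2\beta_j$ to $-2\beta_j$ and yields the term $+2\beta_j\,\mathbb{J}^{(1)}_{k+2,n_j}(t^{(j)})$; and (ii) subtract the measure contribution $(k+1)\,\mathbb{J}^{(1)}_{k,n_j}(t^{(j)})$, because the $\varphi$-blocks carry no $dx_i$ while Lemma \ref{simple variationnal equation} is stated for an integrand that does. Your proposal correctly identifies (ii) but conflates it with (i); as written, the sum of your block contributions does not reproduce \eqref{Virasoro operator} (nor the actual variation of the integrand, since the Gaussian is varied twice). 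Once both corrections are made, summing the $p+q$ block contributions gives exactly \eqref{Virasoro operator}, completing the proof along the same lines as the paper.
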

\begin{proof}
By the Leibniz rule, applying Lemma \ref{simple variationnal equation} to each factor in \eqref{integrand} and adding all these contributions yields \eqref{Virasoro operator}.
\end{proof}
As the variational formula in Lemma \ref{simple variationnal equation} is independent of the labeling of the variables in the integrand, it is a trivial but very important fact that the operator $\mathbb{V}_k^{\vec{m},\vec{n}}$ as defined in (\ref{Virasoro operator}) is independent of the choice of $\sigma\in S_n$. As a consequence, we have the following theorem.
\begin{theorem}\label{Virasoro constraints}
The function $\tau_{\vec{m},\vec{n}}^E(t,s)$ as defined in (\ref{tau}) satisfies the following Virasoro constraints
\begin{align}
\mathcal{B}_k\,\tau_{\vec{m},\vec{n}}^E=\mathbb{V}_k^{\vec{m},\vec{n}}\,\tau_{\vec{m},\vec{n}}^E,\qquad k\geq-1,\label{virasoro constraint}
\end{align}
with
\begin{align}
\mathcal{B}_k=\sum_{i=1}^{2r}c_i^{k+1}\,\frac{\partial}{\partial c_i},\notag
\end{align}
for $E=\cup_{i=1}^{r}[c_{2i-1},c_{2i}]\subset\mathbb{R}$.
\end{theorem}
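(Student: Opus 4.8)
The plan is to follow the Adler--Shiota--van Moerbeke recipe: convert the boundary operator $\mathcal{B}_k$ into the time operator $\mathbb{V}_k^{\vec{m},\vec{n}}$ by performing the change of integration variable $x_i\mapsto x_i+\epsilon x_i^{k+1}$ inside the integral representation \eqref{deformed probability as a integral with vandermonde determinants} of $\tau_{\vec{m},\vec{n}}^E$, and comparing the resulting effect on the integrand with the effect on the domain of integration $E^N$. First I would write $\tau_{\vec{m},\vec{n}}^E$, up to the fixed combinatorial constant $\prod_i m_i!\prod_j n_j!$, as the signed sum $\sum_{\sigma\in S_N}(-1)^\sigma\int_{E^N}dI_{\vec{m},\vec{n}}^\sigma$ of the integrands defined in \eqref{integrand}.

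For the diffeomorphism $x=u+\epsilon u^{k+1}$ (invertible for small $\epsilon$), a mere renaming of the integration variable gives, for each $\sigma$, the identity
\begin{align}
\int_{E^N} dI_{\vec{m},\vec{n}}^\sigma(x) = \int_{(E_\epsilon)^N} dI_{\vec{m},\vec{n}}^\sigma\big(u+\epsilon u^{k+1}\big),\notag
\end{align}
where the Jacobian factors $dx_i=(1+\epsilon(k+1)u_i^k)\,du_i$ are absorbed into the differential form $dI_{\vec{m},\vec{n}}^\sigma$ (exactly the convention under which Lemmas \ref{simple variationnal equation} and \ref{complete variationnal equation} include the contribution of $\prod dx_i$), and where $E_\epsilon$ is the set whose endpoints $c_i^{(\epsilon)}$ solve $c_i^{(\epsilon)}+\epsilon(c_i^{(\epsilon)})^{k+1}=c_i$. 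The left-hand side is independent of $\epsilon$, so the $\epsilon$-derivative of the right-hand side at $\epsilon=0$ must vanish.

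Next I would split this $\epsilon$-derivative into its two standard contributions. The integrand contribution is handled directly by Lemma \ref{complete variationnal equation}, which gives $\tfrac{d}{d\epsilon}\big|_{\epsilon=0}dI_{\vec{m},\vec{n}}^\sigma(u+\epsilon u^{k+1})=\mathbb{V}_k^{\vec{m},\vec{n}}\big(dI_{\vec{m},\vec{n}}^\sigma\big)$; since at $\epsilon=0$ one integrates over $E^N$, and since $\mathbb{V}_k^{\vec{m},\vec{n}}$ acts only on the time variables $(t,s)$ and by multiplication by the parameters $a_i,b_i,\alpha_i,\beta_i$, it commutes with $\int_{E^N}\cdot$ and, crucially using the $\sigma$-independence of $\mathbb{V}_k^{\vec{m},\vec{n}}$ noted above, pulls out of the signed permutation sum. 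Summing with the combinatorial weight reconstitutes exactly $\mathbb{V}_k^{\vec{m},\vec{n}}\tau_{\vec{m},\vec{n}}^E$. The domain contribution comes from the moving endpoints: implicit differentiation of $c_i^{(\epsilon)}+\epsilon(c_i^{(\epsilon)})^{k+1}=c_i$ yields $\tfrac{d}{d\epsilon}c_i^{(\epsilon)}\big|_{\epsilon=0}=-c_i^{k+1}$, so the chain rule together with the elementary boundary derivatives of $\int_E$ gives $\tfrac{d}{d\epsilon}\big|_{\epsilon=0}\int_{(E_\epsilon)^N}dI=\sum_{i=1}^{2r}(-c_i^{k+1})\tfrac{\partial}{\partial c_i}\tau_{\vec{m},\vec{n}}^E=-\mathcal{B}_k\tau_{\vec{m},\vec{n}}^E$. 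Adding the two contributions and setting the total to zero yields $\mathbb{V}_k^{\vec{m},\vec{n}}\tau_{\vec{m},\vec{n}}^E-\mathcal{B}_k\tau_{\vec{m},\vec{n}}^E=0$, which is precisely \eqref{virasoro constraint}.

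The routine points---differentiation under the integral sign and convergence of all the integrals---are guaranteed by the Gaussian weight $e^{-x^2/2}$, which dominates every polynomial and exponential factor appearing in $dI_{\vec{m},\vec{n}}^\sigma$. The one genuinely delicate step, and the place where the argument could break, is the bookkeeping of the permutation sum: Lemma \ref{complete variationnal equation} produces the \emph{same} operator $\mathbb{V}_k^{\vec{m},\vec{n}}$ for every $\sigma$ only because the variational formula of Lemma \ref{simple variationnal equation} is insensitive to the labeling of the integration variables, and it is exactly this $\sigma$-independence that allows a single time-operator to act on the whole sum \eqref{deformed probability as a integral with vandermonde determinants} rather than a different operator term by term. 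Checking the signs in the moving-boundary computation---so that the boundary velocity $-c_i^{k+1}$ assembles precisely into $-\mathcal{B}_k$ with $\mathcal{B}_k=\sum_i c_i^{k+1}\partial_{c_i}$---is the other point demanding care.
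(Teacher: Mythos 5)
Your proof is correct and follows essentially the same route as the paper's: the same decomposition of $\tau_{\vec{m},\vec{n}}^E$ into the signed sum $\sum_{\sigma}(-1)^\sigma\int_{E^N}dI_{\vec{m},\vec{n}}^\sigma$, the same change of variables $x_i\mapsto x_i+\epsilon x_i^{k+1}$ producing a boundary term $-\mathcal{B}_k$ via the inverse map $c_i\mapsto c_i-\epsilon c_i^{k+1}+O(\epsilon^2)$ and an integrand term $\mathbb{V}_k^{\vec{m},\vec{n}}$ via Lemma \ref{complete variationnal equation}, and the same crucial use of the $\sigma$-independence of $\mathbb{V}_k^{\vec{m},\vec{n}}$ to sum the constraints over permutations. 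Your write-up merely makes explicit some steps the paper compresses (the implicit differentiation of the endpoint equation and the splitting of the $\epsilon$-derivative), so there is nothing to correct.
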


\begin{proof}
By virtue of formula \eqref{deformed probability as a integral with vandermonde determinants} expressing $\tau_{\vec{m},\vec{n}}^E(t,s;\alpha,\beta;a,b)$ as a $N$-uple integral over $E$, we have
\begin{align*}
\tau_{\vec{m},\vec{n}}^E(t,s;\alpha,\beta;a,b)=\frac{1}{\prod_{i=1}^{q}m_i!\prod_{j=1}^{p}n_j!}\sum_{\sigma\in S_N}(-1)^\sigma \tau_{\vec{m},\vec{n}}^{E,\sigma}(t,s;\alpha,\beta;a,b),
\end{align*}
where $\tau_{\vec{m},\vec{n}}^{E,\sigma}(t,s;\alpha,\beta;a,b)$ is defined by
\begin{align*}
\tau_{\vec{m},\vec{n}}^{E,\sigma}(t,s;\alpha,\beta;a,b):=\int_{E^N}dI_{\vec{m},\vec{n}}^\sigma(x;(t,s)),
\end{align*}
with $dI_{\vec{m},\vec{n}}^\sigma(x;(t,s))$ as in \eqref{integrand}. For a fixed permutation $\sigma\in S_N$ and $k\geq -1$, we apply the change of variables $x_i\mapsto x_i+\epsilon x_i^{k+1}$, $1\leq i\leq N$, given in lemma \ref{complete variationnal equation}, in the integral defining $\tau_{\vec{m},\vec{n}}^{E,\sigma}(t,s;\alpha,\beta;a,b)$. This change of variables leaves the integral invariant, but induces a change of limits of integration, given by the inverse map
\begin{align*}
c_i\mapsto c_i-\epsilon c_i^{k+1}+O(\epsilon^2),\quad 1\leq i\leq 2r,
\end{align*}
for $\epsilon$ small enough. Consequently, differentiating the result with respect to $\epsilon$ and evaluating it at $\epsilon=0$, using the fundamental theorem of integral calculus together with Lemma \ref{complete variationnal equation}, we obtain
\begin{align}
\mathcal{B}_k\,\tau_{\vec{m},\vec{n}}^{E,\sigma}=\mathbb{V}_k^{\vec{m},\vec{n}}\,\tau_{\vec{m},\vec{n}}^{E,\sigma},\qquad k\geq-1,\label{Virasoro sigma}
\end{align}
with 
\begin{align}
\mathcal{B}_k=\sum_{i=1}^{2r}c_i^{k+1}\,\frac{\partial}{\partial c_i}.\notag
\end{align}
As noticed earlier, the operator $\mathbb{V}_k^{\vec{m},\vec{n}}$ does not depend on $\sigma\in S_N$. Consequently, summing \eqref{Virasoro sigma} over $\sigma\in S_N$ and dividing by $\prod_{i=1}^{q}m_i!\prod_{j=1}^{p}n_j!$, we obtain
\begin{align*}
\mathcal{B}_k\,\tau_{\vec{m},\vec{n}}^E=\mathbb{V}_k^{\vec{m},\vec{n}}\,\tau_{\vec{m},\vec{n}}^E,\qquad k\geq-1.
\end{align*}
This concludes the proof.
\end{proof}

When specializing the differential equations (\ref{virasoro constraint}) to $k=-1$ and $k=0$, we find that the tau-function $\tau_{\vec{m},\vec{n}}^E$ satisfies respectively
\begin{align}
&\mathcal{B}_{-1}\,\tau=\sum_{i\geq 2}\Bigg(\sum_{l=1}^{q}is^{(l)}_i\frac{\partial}{\partial s^{(l)}_{i-1}}+\sum_{l=1}^{p}it^{(l)}_i\frac{\partial}{\partial t^{(l)}_{i-1}}\Bigg)\tau+\sum_{l=1}^{q}(1-2\alpha_l)\frac{\partial\tau}{\partial s^{(l)}_{1}}+2\sum_{l=1}^{p}\beta_l\frac{\partial\tau}{\partial t^{(l)}_{1}}\notag\\
&\phantom{\mathcal{B}_{-1}\,\tau=}+\Big(\sum_{l=1}^{p}n_lt^{(l)}_{1}-\sum_{l=1}^{q}m_ls^{(l)}_{1}\Big)\tau+\Big(\sum_{l=1}^{q}a_lm_l+\sum_{l=1}^{p}b_ln_l\Big)\tau,\notag\\
&\mathcal{B}_{0}\,\tau=\sum_{i\geq 1}\Bigg(\sum_{l=1}^{q}is^{(l)}_i\frac{\partial}{\partial s^{(l)}_{i}}+\sum_{l=1}^{p}it^{(l)}_i\frac{\partial}{\partial t^{(l)}_{i}}\Bigg)\tau-\sum_{l=1}^{q}a_l\frac{\partial\tau}{\partial s^{(l)}_{1}}+\sum_{l=1}^{p}b_l\frac{\partial\tau}{\partial t^{(l)}_{1}}\notag\\
&\phantom{\mathcal{B}_{0}\,\tau=}+\sum_{l=1}^{q}(1-2\alpha_l)\frac{\partial\tau}{\partial s^{(l)}_{2}}+2\sum_{l=1}^{p}\beta_l\frac{\partial\tau}{\partial t^{(l)}_{2}}+\frac{1}{2}\Big(\sum_{l=1}^{q}m_l^2+\sum_{l=1}^{p}n_l^2\Big)\tau.\label{Virasoro -1 and 0}
\end{align}

The Virasoro constraints (\ref{virasoro constraint}) play a very crucial role in finding a PDE for the function $\log\mathbb{P}_{b_1,\dots,b_p}^{a_1,\dots,a_q}\big(all\ x_i(t)\in E\big)$ in the variables $a_1,\dots, a_q$, $b_1,\dots,b_p$ and the endpoints of the set $E$. In the next section, we will prove the existence of a PDE for the logarithm of the normalized problem $P_{p,q}(E;a,b)$ defined in \eqref{normalized problem}, and deduce Theorem \ref{Theorem introduction} from it. The normalized problem is related to the function $\tau_{\vec{m},\vec{n}}^E(t,s;\alpha,\beta;a,b)$ on the locus $\mathcal{L}=\{(t,s)=0,(\alpha,\beta)=0\}$ through formula \eqref{deformed normalized problem}. As we have seen, this function is a tau-function of the $(p+q)$-component KP hierarchy and thus satisfies the PDE's \eqref{useful formula multi-kp 2}. As the Virasoro constraints involve derivatives with respect to the endpoints of the set $E$, as well as derivatives with respect to the time variables $(t,s)$, we will prove that they can be used to eliminate all the derivatives with respect to the time variables in \eqref{useful formula multi-kp 2} on the locus $\mathcal{L}$. The proof proceeds in two main steps. In the first step, the Virasoro constraints, together with the linear conditions imposed on $a_i,\alpha_i,b_j,\beta_j$, $1\leq i\leq q$ and $1\leq j\leq p$
\begin{align}
\sum_{i=1}^{q}a_i=\sum_{i=1}^{p}b_i=\sum_{i=1}^{q}\alpha_i=\sum_{i=1}^{p}\beta_i=0,\label{linear conditions}
\end{align}
are used to express on the locus $\mathcal{K}=\{(t,s)=0\}$ all the derivatives with respect to the time variables in \eqref{useful formula multi-kp 2} in terms of derivatives with respect to the auxiliary variables $\alpha_1,\dots,\alpha_q$ and $\beta_1,\dots,\beta_p$. In the second step, using a combinatorial argument, it will be shown that, on the locus $\mathcal{L}$, all these derivatives with respect to $\alpha_1,\dots,\alpha_q$ and $\beta_1,\dots,\beta_p$ can be eliminated. Both steps will be performed in the next section. We end this section with some consequences of Theorem \ref{Virasoro constraints}.

From the linear conditions \eqref{linear conditions} it follows that the function $\tau_{\vec{m},\vec{n}}^E(t,s;\alpha,\beta;a,b)$ as defined in (\ref{tau}) satisfies the following equations
\begin{align}
\sum_{l=1}^{q}\frac{\partial\tau}{\partial s_i^{(l)}}+\sum_{l=1}^{p}\frac{\partial\tau}{\partial t_i^{(l)}}=0,\quad i\geq 1,\label{interesting 1}
\end{align}
and
\begin{align}
&\frac{\partial\tau}{\partial a_i}=-\frac{\partial\tau}{\partial s^{(i)}_1}+\frac{\partial\tau}{\partial s^{(q)}_1},\quad 1\leq i\leq q-1,\label{interesting 2}\\
&\frac{\partial\tau}{\partial b_i}=\frac{\partial\tau}{\partial t^{(i)}_1}-\frac{\partial\tau}{\partial t^{(p)}_1},\quad 1\leq i\leq p-1,\label{interesting 3}\\
&\frac{\partial\tau}{\partial \alpha_i}=-\frac{\partial\tau}{\partial s^{(i)}_2}+\frac{\partial\tau}{\partial s^{(q)}_2},\quad 1\leq i\leq q-1,\label{interesting 4}\\
&\frac{\partial\tau}{\partial \beta_i}=\frac{\partial\tau}{\partial t^{(i)}_2}-\frac{\partial\tau}{\partial t^{(p)}_2},\quad 1\leq i\leq p-1.\label{interesting 5}
\end{align}
From these equations, we deduce two families of identities. Let $f:=\log\tau_{\vec{m},\vec{n}}^E(t,s;\alpha,\beta;a,b)$. Firstly, using equation (\ref{interesting 2}) we have
\begin{align}
\sum_{l=1}^{q}a_l\frac{\partial f}{\partial s^{(l)}_1}=-\sum_{l=1}^{q-1}a_l\frac{\partial f}{\partial a_l},\label{sum as}
\end{align}
since $\sum_{l=1}^{q}a_l=0$, and similarly
\begin{gather}
\sum_{l=1}^{q}\alpha_l\frac{\partial f}{\partial s^{(l)}_1}=-\sum_{l=1}^{q-1}\alpha_l\frac{\partial f}{\partial a_l},\qquad\sum_{l=1}^{p}b_l\frac{\partial f}{\partial t^{(l)}_1}=\sum_{l=1}^{p-1}b_l\frac{\partial f}{\partial b_l},\qquad\sum_{l=1}^{p}\beta_l\frac{\partial f}{\partial t^{(l)}_1}=\sum_{l=1}^{p-1}\beta_l\frac{\partial f}{\partial b_l},\notag\\
\sum_{l=1}^{q}\alpha_l\frac{\partial f}{\partial s^{(l)}_2}=-\sum_{l=1}^{q-1}\alpha_l\frac{\partial f}{\partial \alpha_l},\qquad\sum_{l=1}^{p}\beta_l\frac{\partial f}{\partial t^{(l)}_2}=\sum_{l=1}^{p-1}\beta_l\frac{\partial f}{\partial \beta_l}.\label{sum bt}
\end{gather}
Secondly, using equation (\ref{interesting 2}) we have
\begin{align*}
\sum_{i=1}^{q}\frac{\partial f}{\partial s_1^{(i)}}=\sum_{i=1}^{q-1}\frac{\partial f}{\partial a_l}+q\frac{\partial f}{\partial s_1^{(q)}},
\end{align*}
and thus
\begin{align}
\frac{\partial f}{\partial s_1^{(q)}}=\frac{1}{q}\sum_{i=1}^{q}\frac{\partial f}{\partial s_1^{(i)}}-\frac{1}{q}\sum_{i=1}^{q-1}\frac{\partial f}{\partial a_l}.\label{dsq}
\end{align}
Using again equation (\ref{interesting 2}), we obtain
\begin{align}
\frac{\partial f}{\partial s_1^{(j)}}=\frac{1}{q}\sum_{i=1}^{q}\frac{\partial f}{\partial s_1^{(i)}}-\frac{1}{q}\sum_{i=1}^{q-1}\frac{\partial f}{\partial a_l}-\frac{\partial f}{\partial a_j},\quad 1\leq j\leq q-1.\label{dsj}
\end{align}
Equations \eqref{dsq} and \eqref{dsj} can be summarized as follows
\begin{align}
\frac{\partial f}{\partial s_1^{(j)}}=\frac{1}{q}\sum_{i=1}^{q}\frac{\partial f}{\partial s_1^{(i)}}-\frac{1}{q}\sum_{i=1}^{q-1}\frac{\partial f}{\partial a_l}-(1-\delta_{jq})\frac{\partial f}{\partial a_j},\quad 1\leq j\leq q.\label{ds}
\end{align}
Similarly, we have
\begin{align}
&\frac{\partial f}{\partial t_1^{(j)}}=\frac{1}{p}\sum_{i=1}^{p}\frac{\partial f}{\partial t_1^{(i)}}-\frac{1}{p}\sum_{i=1}^{p-1}\frac{\partial f}{\partial b_l}-(1-\delta_{jp})\frac{\partial f}{\partial b_j},\quad 1\leq j\leq p,\notag\\
&\frac{\partial f}{\partial s_2^{(j)}}=\frac{1}{q}\sum_{i=1}^{q}\frac{\partial f}{\partial s_2^{(i)}}-\frac{1}{q}\sum_{i=1}^{q-1}\frac{\partial f}{\partial \alpha_l}-(1-\delta_{jq})\frac{\partial f}{\partial \alpha_j},\quad 1\leq j\leq q,\label{dt}\\
&\frac{\partial f}{\partial t_2^{(j)}}=\frac{1}{p}\sum_{i=1}^{p}\frac{\partial f}{\partial t_2^{(i)}}-\frac{1}{p}\sum_{i=1}^{p-1}\frac{\partial f}{\partial \beta_l}-(1-\delta_{jp})\frac{\partial f}{\partial \beta_j},\quad 1\leq j\leq p.\notag
\end{align}
Substituting relations \eqref{sum as},\eqref{sum bt}, \eqref{ds}, \eqref{dt} in the Virasoro constraints (\ref{Virasoro -1 and 0}), we get
\begin{align}
&A_jf=\frac{\partial f}{\partial s^{(j)}_1}+\frac{1}{q}\sum_{i\geq 2}\Bigg(\sum_{l=1}^{q}is^{(l)}_i\frac{\partial}{\partial s^{(l)}_{i-1}}+\sum_{l=1}^{p}it^{(l)}_i\frac{\partial}{\partial t^{(l)}_{i-1}}\Bigg)f+\frac{1}{q}\Big(\sum_{l=1}^{p}n_lt^{(l)}_1-\sum_{l=1}^{q}m_ls^{(l)}_1\Big)\notag\\
&\phantom{A_jf=}+\frac{1}{q}\Big(\sum_{l=1}^{q}a_lm_l+\sum_{l=1}^{p}b_ln_l\Big),\qquad 1\leq j\leq q,\notag\\
&B_jf=-\frac{\partial f}{\partial t^{(j)}_1}+\frac{1}{p}\sum_{i\geq 2}\Bigg(\sum_{l=1}^{q}is^{(l)}_i\frac{\partial}{\partial s^{(l)}_{i-1}}+\sum_{l=1}^{p}it^{(l)}_i\frac{\partial}{\partial t^{(l)}_{i-1}}\Bigg)f+\frac{1}{p}\Big(\sum_{l=1}^{p}n_lt^{(l)}_1-\sum_{l=1}^{q}m_ls^{(l)}_1\Big)\notag\\
&\phantom{B_jf=}+\frac{1}{p}\Big(\sum_{l=1}^{q}a_lm_l+\sum_{l=1}^{p}b_ln_l\Big),\qquad 1\leq j\leq p,\notag\\
&\hat{A}_jf=\frac{\partial f}{\partial s^{(j)}_2}+\frac{1}{q}\sum_{i\geq 1}\Bigg(\sum_{l=1}^{q}is^{(l)}_i\frac{\partial}{\partial s^{(l)}_{i}}+\sum_{l=1}^{p}it^{(l)}_i\frac{\partial}{\partial t^{(l)}_{i}}\Bigg)f+\frac{K}{q},\qquad 1\leq j\leq q,\notag\\
&\hat{B}_jf=-\frac{\partial f}{\partial t^{(j)}_2}+\frac{1}{p}\sum_{i\geq 1}\Bigg(\sum_{l=1}^{q}is^{(l)}_i\frac{\partial}{\partial s^{(l)}_{i}}+\sum_{l=1}^{p}it^{(l)}_i\frac{\partial}{\partial t^{(l)}_{i}}\Bigg)f+\frac{K}{p},\qquad 1\leq j\leq p,\label{Virasoro bis}
\end{align}
where
\begin{align}
&A_j=-(1-\delta_{jq})\frac{\partial}{\partial a_j}+\frac{1}{q}\Bigg(\mathcal{B}_{-1}+\sum_{l=1}^{q-1}\frac{\partial}{\partial a_l}-2\Big(\sum_{l=1}^{q-1}\alpha_l\frac{\partial}{\partial a_l}+\sum_{l=1}^{p-1}\beta_l\frac{\partial}{\partial b_l}\Big)\Bigg),\qquad 1\leq j\leq q,\notag\\
&B_j=-(1-\delta_{jp})\frac{\partial}{\partial b_j}+\frac{1}{p}\Bigg(\mathcal{B}_{-1}+\sum_{l=1}^{p-1}\frac{\partial}{\partial b_l}-2\Big(\sum_{l=1}^{q-1}\alpha_l\frac{\partial}{\partial a_l}+\sum_{l=1}^{p-1}\beta_l\frac{\partial}{\partial b_l}\Big)\Bigg),\qquad 1\leq j\leq p,\notag
\end{align}
and
\begin{align}
&\hat{A}_j=-(1-\delta_{jq})\frac{\partial}{\partial \alpha_j}+\frac{1}{q}\Bigg(\mathcal{B}_{0}-\Big(\sum_{l=1}^{q-1}a_l\frac{\partial}{\partial a_l}+\sum_{l=1}^{p-1}b_l\frac{\partial}{\partial b_l}\Big)+\sum_{l=1}^{q-1}\frac{\partial}{\partial \alpha_l}-2\Big(\sum_{l=1}^{q-1}\alpha_l\frac{\partial}{\partial \alpha_l}+\sum_{l=1}^{p-1}\beta_l\frac{\partial}{\partial \beta_l}\Big)\Bigg),\notag\\
&\phantom{\hat{A}_j=-(1-\delta_{jq})\frac{\partial}{\partial \alpha_j}+\frac{1}{q}\Bigg(\mathcal{B}_{0}-\Big(\sum_{l=1}^{q-1}a_l\frac{\partial}{\partial a_l}+\sum_{l=1}^{p-1}b_l\frac{\partial}{\partial b_l}\Big)+\sum_{l=1}^{q-1}\frac{\partial}{\partial \alpha_l}-2} 1\leq j\leq q,\notag\\
&\hat{B}_j=-(1-\delta_{jp})\frac{\partial}{\partial \beta_j}+\frac{1}{p}\Bigg(\mathcal{B}_{0}-\Big(\sum_{l=1}^{q-1}a_l\frac{\partial}{\partial a_l}+\sum_{l=1}^{p-1}b_l\frac{\partial}{\partial b_l}\Big)+\sum_{l=1}^{p-1}\frac{\partial}{\partial \beta_l}-2\Big(\sum_{l=1}^{q-1}\alpha_l\frac{\partial}{\partial \alpha_l}+\sum_{l=1}^{p-1}\beta_l\frac{\partial}{\partial \beta_l}\Big)\Bigg),\notag\\
&\phantom{\hat{B}_j=-(1-\delta_{jp})\frac{\partial}{\partial \beta_j}+\frac{1}{p}\Bigg(\mathcal{B}_{0}-\Big(\sum_{l=1}^{q-1}a_l\frac{\partial}{\partial a_l}+\sum_{l=1}^{p-1}b_l\frac{\partial}{\partial b_l}\Big)+\sum_{l=1}^{p-1}\frac{\partial}{\partial \beta_l}-2} 1\leq j\leq p.\notag
\end{align}
Observe that the operators $A_j$, $1\leq j\leq q$, and $B_j$, $1\leq j\leq p$, all commute. 

\begin{lemma}\label{second derivatives}
On the locus $\mathcal{K}=\{(t,s)=0\}$, the function $f:=\log\tau_{\vec{m},\vec{n}}^E(t,s;\alpha,\beta;a,b)$ satisfies the Virasoro constraints 
\begin{align}
&\frac{\partial^2f}{\partial s^{(j)}_1\partial s^{(k)}_1}=A_jA_kf+\frac{m_j}{q}+\frac{m_k}{q}-\frac{N}{q^2}+\frac{2}{q^2}\big(\left\langle\alpha,m \right\rangle+\left\langle \beta,n\right\rangle\big),\notag\\
&\frac{\partial^2f}{\partial t^{(j)}_1\partial t^{(k)}_1}=B_jB_kf+\frac{n_j}{p}+\frac{n_k}{p}-\frac{N}{p^2}+\frac{2}{p^2}\big(\left\langle\alpha,m \right\rangle+\left\langle \beta,n\right\rangle\big),\notag\\
&\frac{\partial^2f}{\partial s^{(j)}_1\partial t^{(k)}_1}=-A_jB_kf-\frac{m_j}{p}-\frac{n_k}{q}+\frac{N}{pq}-\frac{2}{pq}\big(\left\langle \alpha,m\right\rangle+\left\langle \beta,n\right\rangle\big),\notag\\
&\frac{\partial^2f}{\partial s^{(k)}_1\partial s^{(j)}_2}=\Big(\hat{A}_j-\frac{1}{q}\Big)A_kf+\frac{2}{q^2}\big(\left\langle a,m\right\rangle+\left\langle b,n\right\rangle\big),\notag\\
&\frac{\partial^2f}{\partial t^{(k)}_1\partial t^{(j)}_2}=\Big(\hat{B}_j-\frac{1}{p}\Big)B_kf+\frac{2}{p^2}\big(\left\langle a,m\right\rangle+\left\langle b,n\right\rangle\big),\notag\\
&\frac{\partial^2f}{\partial s^{(k)}_1\partial t^{(j)}_2}=-\Big(\hat{B}_j-\frac{1}{p}\Big)A_kf-\frac{2}{pq}\big(\left\langle a,m\right\rangle+\left\langle b,n\right\rangle\big),\notag\\
&\frac{\partial^2f}{\partial s^{(j)}_2\partial t^{(k)}_1}=-\Big(\hat{A}_j-\frac{1}{q}\Big)B_kf-\frac{2}{pq}\big(\left\langle a,m\right\rangle+\left\langle b,n\right\rangle\big),\notag
\end{align}
where $\left\langle\alpha,m \right\rangle=\sum_{i=1}^{q}\alpha_i m_i$ and $\left\langle \beta,n\right\rangle=\sum_{i=1}^{p}\beta_i n_i$.
\end{lemma}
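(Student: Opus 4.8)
The plan is to obtain each of the seven second-order time derivatives by differentiating \emph{one} of the first-order Virasoro relations in \eqref{Virasoro bis} with respect to a \emph{second} time variable, and only afterwards restricting to the locus $\mathcal{K}=\{(t,s)=0\}$. Solving the four families in \eqref{Virasoro bis} for the first time-derivatives gives, as identities valid for all $(t,s)$,
\begin{align}
\frac{\partial f}{\partial s^{(j)}_1}=A_jf-R-\frac{C}{q},\qquad \frac{\partial f}{\partial t^{(k)}_1}=-B_kf+R'+\frac{C}{p},\notag
\end{align}
together with two analogous relations for $\partial f/\partial s^{(j)}_2$ and $\partial f/\partial t^{(k)}_2$ built from $\hat A_j$ and $\hat B_k$. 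Here $C=\langle a,m\rangle+\langle b,n\rangle$, and the remainders $R,R'$ gather every summand of the corresponding line of \eqref{Virasoro bis} that carries an explicit time factor $s^{(l)}_i$ or $t^{(l)}_i$, including the linear pieces $\tfrac1q(\sum_l n_lt^{(l)}_1-\sum_l m_ls^{(l)}_1)$; the $\hat A,\hat B$ relations carry instead an Euler-type remainder $\tfrac1q\sum_{i\ge1}(\sum_l is^{(l)}_i\partial_{s^{(l)}_i}+\sum_l it^{(l)}_i\partial_{t^{(l)}_i})$ and a genuinely constant term $K/q$.

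Two structural facts drive the computation. First, each reduced operator $A_j,B_k,\hat A_j,\hat B_k$ involves only $\partial/\partial a_l,\partial/\partial b_l,\partial/\partial\alpha_l,\partial/\partial\beta_l$ and the boundary operators $\mathcal{B}_{-1},\mathcal{B}_0$, hence commutes with every time-derivative $\partial/\partial s^{(l)}_i$, $\partial/\partial t^{(l)}_i$; the commutativity $A_jB_k=B_kA_j$ noted just before the statement is the special case used for the $s$--$t$ cross terms. Second, on $\mathcal{K}$ every term still carrying a factor $s^{(l)}_i$ or $t^{(l)}_i$ vanishes. To compute, for instance, $\partial^2 f/\partial s^{(j)}_1\partial s^{(k)}_1$, I differentiate the $s^{(j)}_1$-relation by $s^{(k)}_1$ and restrict: the term $\partial_{s^{(k)}_1}(A_jf)=A_j\,\partial_{s^{(k)}_1}f$ is rewritten with the $s^{(k)}_1$-relation and contributes $A_jA_kf-A_j(C/q)$ on $\mathcal{K}$ (since $A_j$ preserves the time factors in $R$, which then drop); the surviving part of $-\partial_{s^{(k)}_1}R$ is the constant $m_k/q$ produced by the linear piece $\sum_l m_ls^{(l)}_1$; and $\partial_{s^{(k)}_1}(C/q)=0$.

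The only nonelementary ingredient is the action of a reduced operator on $C$, which I evaluate using the linear conditions \eqref{linear conditions}. Eliminating $a_q=-\sum_{l<q}a_l$, $\alpha_q=-\sum_{l<q}\alpha_l$ (and similarly $b_p,\beta_p$) turns the relevant sums into $\sum_{l=1}^{q-1}\alpha_l(m_l-m_q)=\langle\alpha,m\rangle$ and $\sum_{l=1}^{q-1}(m_l-m_q)=N-qm_q$, so that $-A_j(C/q)=\tfrac{m_j}{q}-\tfrac{N}{q^2}+\tfrac{2}{q^2}(\langle\alpha,m\rangle+\langle\beta,n\rangle)$ uniformly in $j$, the factor $(1-\delta_{jq})$ in $A_j$ absorbing exactly the distinction between $j<q$ and $j=q$; combined with the $m_k/q$ above this reproduces the first formula. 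The mixed derivatives follow identically: for the $s_1$--$s_2$ type terms the $\hat A$-remainder retains on $\mathcal{K}$ only its $i=1$ term $-\tfrac1q\partial_{s^{(k)}_1}f=-\tfrac1q A_kf+C/q^2$, which supplies both the shift $\hat A_j-\tfrac1q$ and half of the $2C/q^2$ correction, the other half coming from $-\hat A_j(C/q)=C/q^2$ (one checks $\hat A_jC=-C/q$ via the same elimination). I expect the main difficulty to be purely bookkeeping — tracking which of the many summands of \eqref{Virasoro bis} survive on $\mathcal{K}$, keeping the signs and index placements straight, and assembling the scattered constant contributions — rather than any conceptual obstruction.
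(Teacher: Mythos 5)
Your proposal is correct and follows essentially the same route as the paper's own proof: both rest on the Virasoro relations \eqref{Virasoro bis}, the commutativity of the reduced operators $A_j,B_k,\hat A_j,\hat B_k$ with the time derivatives, the vanishing on $\mathcal{K}$ of every term carrying an explicit factor $s^{(l)}_i$ or $t^{(l)}_i$, and the evaluation of $A_j$ (resp.\ $\hat A_j$) on $\langle a,m\rangle+\langle b,n\rangle$ via the linear constraints \eqref{linear conditions}. The only difference is organizational — you differentiate the solved-for first-order relations in a second time variable, while the paper expands $A_jA_kf|_{\mathcal{K}}$ directly — and your constants, including the uniform-in-$j$ value of $A_j\langle a,m\rangle$ and the identity $\hat A_j C=-C/q$, all check out.
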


\begin{proof}
We compute on the locus $\mathcal{K}$, using (\ref{Virasoro bis}) that
\begin{align}
&A_jA_kf\big|_{\mathcal{K}}=A_j\Bigg[\frac{\partial f}{\partial s^{(k)}_1}+\frac{1}{q}\sum_{i\geq 2}\Bigg(\sum_{l=1}^{q}is^{(l)}_i\frac{\partial}{\partial s^{(l)}_{i-1}}+\sum_{l=1}^{p}it^{(l)}_i\frac{\partial}{\partial t^{(l)}_{i-1}}\Bigg)f+\frac{1}{q}\Big(\sum_{l=1}^{p}n_lt^{(l)}_1-\sum_{l=1}^{q}m_ls^{(l)}_1\Big)\notag\\
&\phantom{A_jA_kf\big|_{\mathcal{K}}=A_j\Bigg[}+\frac{1}{q}\Big(\sum_{l=1}^{q}a_lm_l+\sum_{l=1}^{p}b_ln_l\Big)\Bigg]\Bigg|_{\mathcal{K}}\notag\\
&\phantom{A_jA_kf\big|_{\mathcal{K}}}=\Bigg[\frac{\partial}{\partial s^{(k)}_1}+\frac{1}{q}\sum_{i\geq 2}\Bigg(\sum_{l=1}^{q}is^{(l)}_i\frac{\partial}{\partial s^{(l)}_{i-1}}+\sum_{l=1}^{p}it^{(l)}_i\frac{\partial}{\partial t^{(l)}_{i-1}}\Bigg)\Bigg]A_jf\big|_{\mathcal{K}}+\frac{1}{q}A_j\Big(\sum_{l=1}^{q}a_lm_l+\sum_{l=1}^{p}b_ln_l\Big)\Big|_{\mathcal{K}}\notag\\
&\phantom{A_jA_kf\big|_{\mathcal{K}}}=\frac{\partial}{\partial s_1^{(k)}}\Bigg[\frac{\partial f}{\partial s^{(j)}_1}+\frac{1}{q}\sum_{i\geq 2}\Bigg(\sum_{l=1}^{q}is^{(l)}_i\frac{\partial}{\partial s^{(l)}_{i-1}}+\sum_{l=1}^{p}it^{(l)}_i\frac{\partial}{\partial t^{(l)}_{i-1}}\Bigg)f+\frac{1}{q}\Big(\sum_{l=1}^{p}n_lt^{(l)}_1-\sum_{l=1}^{q}m_ls^{(l)}_1\Big)\notag\\
&\phantom{A_jA_kf\big|_{\mathcal{K}}=\frac{\partial}{\partial s_1^{(k)}}\Bigg[}+\frac{1}{q}\Big(\sum_{l=1}^{q}a_lm_l+\sum_{l=1}^{p}b_ln_l\Big)\Bigg]\Bigg|_{\mathcal{K}}\notag\\
&\phantom{A_jA_kf\big|_{\mathcal{K}}=}+\frac{1}{q}\Bigg[-(1-\delta_{j,q})\frac{\partial}{\partial a_j}+\frac{1}{q}\Bigg(\mathcal{B}_{-1}+\sum_{l=1}^{q-1}\frac{\partial}{\partial a_l}-2\Big(\sum_{l=1}^{q-1}\alpha_l\frac{\partial}{\partial a_l}+\sum_{l=1}^{p-1}\beta_l\frac{\partial}{\partial b_l}\Big)\Bigg)\Bigg]\notag\\
&\phantom{A_jA_kf\big|_{\mathcal{K}}=\frac{\partial}{\partial s_1^{(k)}}\Bigg[}\Big(\sum_{l=1}^{q}a_lm_l+\sum_{l=1}^{p}b_ln_l\Big)\Big|_{\mathcal{K}}\notag\\
&\phantom{A_jA_kf\big|_{\mathcal{K}}}=\frac{\partial^2f}{\partial s^{(j)}_1\partial s^{(k)}_1}-\frac{m_k}{q}+\frac{1}{q^2}\Big(-q(m_j-m_q)(1-\delta_{j,q})+\sum_{l=1}^{q-1}(m_l-m_q)\notag\\
&\phantom{A_jA_kf\big|_{\mathcal{K}}=}-2\sum_{l=1}^{q-1}\alpha_l(m_l-m_q)-2\sum_{l=1}^{p-1}\beta_l(n_l-n_p)\Big).\notag
\end{align}
Since $\sum_{l=1}^{q-1}(m_l-m_q)=N-qm_q$, $\sum_{l=1}^{q-1}\alpha_l(m_l-m_q)=\left\langle \alpha,m\right\rangle$ and $\sum_{l=1}^{p-1}\beta_l(n_l-n_p)=\left\langle \beta,n\right\rangle$, we obtain
\begin{align}
&A_jA_kf\big|_{\mathcal{K}}=\frac{\partial^2f}{\partial s^{(j)}_1\partial s^{(k)}_1}-\frac{m_j}{q}-\frac{m_k}{q}+\frac{N}{q^2}-\frac{2}{q^2}\big(\left\langle\alpha,m \right\rangle+\left\langle \beta,n\right\rangle\big).\notag
\end{align}
The proof of the other relations is analogous.
\end{proof}


\section{Existence of a PDE for $\log P_{p,q}(E;a,b)$}

In this section we prove that, under the assumptions $a_1+\dots,+a_q=0$ and $b_1+\dots+b_p=0$, the function $\log P_{p,q}(E;a,b)$, with $P_{p,q}(E;a,b)$ as defined in (\ref{normalized problem}), satisfies a nonlinear PDE, the variables being $a_1,\dots, a_{q-1}$, $b_1,\dots,b_{p-1}$ and the coordinates of the endpoints of the set $E$, i.e. $c_1,\dots,c_{2r}$. To perform this, we first show that the function $f:=\log\tau_{\vec{m},\vec{n}}^E(t,s;\alpha,\beta;a,b)$ satisfies a system of $\frac{1}{2}(p+q)(p+q-1)$ equations on the locus $\mathcal{L}$, containing partial derivatives with respect to $a_1,\dots, a_{q-1}$, $b_1,\dots,b_{p-1}$, $\alpha_1,\dots, \alpha_{q-1}$, $\beta_1,\dots,\beta_{p-1}$ and the coordinates of the endpoints of the set $E$.\\

We define the operators
\begin{align}
&A_{j}^{\mathcal{L}}=-(1-\delta_{jq})\frac{\partial}{\partial a_j}+\frac{1}{q}\Big(\mathcal{B}_{-1}+\sum_{l=1}^{q-1}\frac{\partial}{\partial a_l}\Big),\quad 1\leq j\leq q,\notag\\
&B_{j}^{\mathcal{L}}=-(1-\delta_{jp})\frac{\partial}{\partial b_j}+\frac{1}{p}\Big(\mathcal{B}_{-1}+\sum_{l=1}^{p-1}\frac{\partial}{\partial b_l}\Big),\quad 1\leq j\leq p,\label{A^L and B^L}\\
&\hat{\mathcal{B}}_0=\mathcal{B}_{0}-\Big(\sum_{l=1}^{q-1}a_l\frac{\partial}{\partial a_l}+\sum_{l=1}^{p-1}b_l\frac{\partial}{\partial b_l}\Big).\notag
\end{align}
We then have
\begin{align}
&A_j=A_{j}^{\mathcal{L}}-\frac{2}{q}\Big(\sum_{l=1}^{q-1}\alpha_l\frac{\partial}{\partial a_l}+\sum_{l=1}^{p-1}\beta_l\frac{\partial}{\partial b_l}\Big),\quad 1\leq j\leq q,\notag\\
&B_j=B_{j}^{\mathcal{L}}-\frac{2}{p}\Big(\sum_{l=1}^{q-1}\alpha_l\frac{\partial}{\partial a_l}+\sum_{l=1}^{p-1}\beta_l\frac{\partial}{\partial b_l}\Big),\quad 1\leq j\leq p,\notag\\
&\hat{A}_{j}=\frac{1}{q}\hat{\mathcal{B}}_0-(1-\delta_{jq})\frac{\partial}{\partial \alpha_j}-\frac{2}{q}\Big(\sum_{l=1}^{q-1}\alpha_l\frac{\partial}{\partial \alpha_l}+\sum_{l=1}^{p-1}\beta_l\frac{\partial}{\partial \beta_l}\Big)+\frac{1}{q}\sum_{l=1}^{q-1}\frac{\partial}{\partial \alpha_l},\quad 1\leq j\leq q,\label{A^L and B^L bis}\\
&\hat{B}_j=\frac{1}{p}\hat{\mathcal{B}}_0-(1-\delta_{jp})\frac{\partial}{\partial \beta_j}-\frac{2}{p}\Big(\sum_{l=1}^{q-1}\alpha_l\frac{\partial}{\partial \alpha_l}+\sum_{l=1}^{p-1}\beta_l\frac{\partial}{\partial \beta_l}\Big)+\frac{1}{p}\sum_{l=1}^{p-1}\frac{\partial}{\partial \beta_l},\quad 1\leq j\leq p.\notag
\end{align}
We also introduce the following notation
\begin{align}
&-\partial_{\beta_j}+\frac{1}{p}\partial_\beta=-(1-\delta_{jp})\frac{\partial}{\partial \beta_j}+\frac{1}{p}\sum_{l=1}^{p-1}\frac{\partial}{\partial \beta_l},\qquad -\partial_{b_j}+\frac{1}{p}\partial_b=-(1-\delta_{jp})\frac{\partial}{\partial b_j}+\frac{1}{p}\sum_{l=1}^{p-1}\frac{\partial}{\partial b_l},\notag\\
&-\partial_{\alpha_k}+\frac{1}{q}\partial_\alpha=-(1-\delta_{kq})\frac{\partial}{\partial \alpha_k}+\frac{1}{q}\sum_{l=1}^{q-1}\frac{\partial}{\partial \alpha_l},\qquad -\partial_{a_k}+\frac{1}{q}\partial_a=-(1-\delta_{kq})\frac{\partial}{\partial a_k}+\frac{1}{q}\sum_{l=1}^{q-1}\frac{\partial}{\partial a_l},\label{notation derivatives}
\end{align}
with $1\leq j\leq p$ and $1\leq k\leq q$. Note the two sets of operators on the first row respectively sum to zero as we sum $j$ from $1$ to $p$, and similarly for the operators on the second row, as we sum $k$ from $1$ to $q$. With these notations we have the following.

\begin{theorem}\label{system of equations for log tau with unknowns}
The function $f:=\log\tau_{\vec{m},\vec{n}}^E(t,s;\alpha,\beta;a,b)$ satisfies the following $\frac{1}{2}(p+q)(p+q-1)$ equations\footnote{$\{f,g\}_X=g\,X(f)-f\,X(g)$} on the locus $\mathcal{L}$
\begin{multline*}
\Big\{A_{j}^{\mathcal{L}}(\frac{1}{p}\partial_\beta-\partial_{\beta_k})f,A_{j}^{\mathcal{L}}B_{k}^{\mathcal{L}}f+\frac{m_j}{p}+\frac{n_k}{q}-\frac{N}{pq}\Big\}_{A_{j}^{\mathcal{L}}}-\Big\{B_{k}^{\mathcal{L}}(\frac{1}{q}\partial_\alpha-\partial_{\alpha_j})f,A_{j}^{\mathcal{L}}B_{k}^{\mathcal{L}}f+\frac{m_j}{p}+\frac{n_k}{q}-\frac{N}{pq}\Big\}_{B_{k}^{\mathcal{L}}}\\
=G_{jk}^{AB},\quad 1\leq j\leq q, 1\leq k\leq p,
\end{multline*}
\begin{multline}\label{summary}
\Big\{A_{k}^{\mathcal{L}}(\frac{1}{q}\partial_\alpha-\partial_{\alpha_j})f,A_{j}^{\mathcal{L}}A_{k}^{\mathcal{L}}f+\frac{m_j}{q}+\frac{m_k}{q}-\frac{N}{q^2}\Big\}_{A_{k}^{\mathcal{L}}}+\Big\{A_{j}^{\mathcal{L}}(\frac{1}{q}\partial_\alpha-\partial_{\alpha_k})f,A_{j}^{\mathcal{L}}A_{k}^{\mathcal{L}}f+\frac{m_j}{q}+\frac{m_k}{q}-\frac{N}{q^2}\Big\}_{A_{j}^{\mathcal{L}}}\\
=G_{jk}^{A},\quad 1\leq j<k\leq q,
\end{multline}
\begin{multline*}
\Big\{B_{k}^{\mathcal{L}}(\frac{1}{p}\partial_\beta-\partial_{\beta_j})f,B_{j}^{\mathcal{L}}B_{k}^{\mathcal{L}}f+\frac{n_j}{p}+\frac{n_k}{p}-\frac{N}{p^2}\Big\}_{B_{k}^{\mathcal{L}}}+\Big\{B_{j}^{\mathcal{L}}(\frac{1}{p}\partial_\beta-\partial_{\beta_k})f,B_{j}^{\mathcal{L}}B_{k}^{\mathcal{L}}f+\frac{n_j}{p}+\frac{n_k}{p}-\frac{N}{p^2}\Big\}_{B_{j}^{\mathcal{L}}}\\
=G_{jk}^{B},\quad 1\leq j<k\leq p,\notag
\end{multline*}
where $G_{jk}^{A}$, $G_{jk}^{B}$ and $G_{jk}^{AB}$ only depend on $f$, its derivatives with respect to $a_1,\dots, a_{q-1}$, $b_1,\dots,b_{p-1}$, and its differentials up to the third order with respect to the operators $A_{j}^{\mathcal{L}}$, $B_{j}^{\mathcal{L}}$ and $\hat{\mathcal{B}}_0$, evaluated on the locus $\mathcal{L}$.
\end{theorem}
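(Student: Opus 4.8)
The plan is to start from the four families of bilinear KP identities of Corollary~\ref{corollary bilinear identities}, to eliminate the shifted tau-functions $\tau^E_{\vec m\pm\cdots,\vec n\pm\cdots}$ by a compatibility (equality of mixed partials) argument, and only then to rewrite the resulting third-order time-derivative PDEs for $f$ on the locus $\mathcal{L}$ in terms of the operators $A^{\mathcal{L}}_j,B^{\mathcal{L}}_k,\hat{\mathcal{B}}_0$, using Lemma~\ref{second derivatives} together with the relations \eqref{ds}--\eqref{dt}.

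The key observation is that the four identities of Corollary~\ref{corollary bilinear identities} pair up so as to involve a \emph{single} log-ratio of shifted tau-functions. For the $AB$-type, set $h:=\log\big(\tau^E_{\vec m+\vec e_k,\vec n+\vec e_l}/\tau^E_{\vec m-\vec e_k,\vec n-\vec e_l}\big)$; the third and fourth identities then read $\partial_{t^{(l)}_1}h=\partial^2_{t^{(l)}_2 s^{(k)}_1}f/\partial^2_{s^{(k)}_1 t^{(l)}_1}f$ and $\partial_{s^{(k)}_1}h=-\partial^2_{s^{(k)}_2 t^{(l)}_1}f/\partial^2_{s^{(k)}_1 t^{(l)}_1}f$, sharing the common denominator $\partial^2_{s^{(k)}_1 t^{(l)}_1}f$. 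Imposing $\partial_{s^{(k)}_1}\partial_{t^{(l)}_1}h=\partial_{t^{(l)}_1}\partial_{s^{(k)}_1}h$ and clearing this denominator eliminates $h$ and produces a PDE in $f$ alone,
\begin{equation*}
\{\partial^2_{t^{(l)}_2 s^{(k)}_1}f,\ \partial^2_{s^{(k)}_1 t^{(l)}_1}f\}_{\partial_{s^{(k)}_1}}+\{\partial^2_{s^{(k)}_2 t^{(l)}_1}f,\ \partial^2_{s^{(k)}_1 t^{(l)}_1}f\}_{\partial_{t^{(l)}_1}}=0 .
\end{equation*}
The first identity (two $t$-variables) and the second (two $s$-variables) are handled identically, using the $l\leftrightarrow l'$ (resp.\ $k\leftrightarrow k'$) antisymmetry of the relevant log-ratio to create a common symmetric denominator; this yields the $B$-type and $A$-type equations, the restriction to $j<k$ reflecting exactly that antisymmetry. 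The count $pq+\binom q2+\binom p2=\tfrac12(p+q)(p+q-1)$ matches the statement.

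It remains to pass to $\mathcal{L}$. The common denominator $\partial^2_{s^{(k)}_1 t^{(l)}_1}f$ is, by Lemma~\ref{second derivatives}, equal on $\mathcal{L}$ to $-\big(A^{\mathcal{L}}_kB^{\mathcal{L}}_l f+\tfrac{m_k}{p}+\tfrac{n_l}{q}-\tfrac N{pq}\big)$ — the $\alpha,\beta$-corrections in $A_k,B_l$ and the terms $\langle\alpha,m\rangle,\langle\beta,n\rangle$ all drop since they carry explicit factors of $\alpha$ or $\beta$ — and this fills the second slot of each Wronskian. The numerators reduce by the same lemma: the part $(\tfrac1p\partial_\beta-\partial_{\beta_l})A^{\mathcal{L}}_kf=A^{\mathcal{L}}_k(\tfrac1p\partial_\beta-\partial_{\beta_l})f$ of $\partial^2_{s^{(k)}_1 t^{(l)}_2}f$ supplies the first slot, the $\hat{\mathcal{B}}_0$-pieces and scalars being set aside. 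Finally the outer operators are converted by differentiating the Virasoro relations \eqref{Virasoro bis} \emph{as identities in} $(t,s)$: modulo the averaged sums $\tfrac1q\sum_i(\cdots)$ and constants, $\partial_{s^{(j)}_1}$ becomes $A^{\mathcal{L}}_j$ and $\partial_{t^{(k)}_1}$ becomes $-B^{\mathcal{L}}_k$ on $\mathcal{L}$ (the minus accounting for the relative sign between the two brackets in the $AB$-equation). Collecting the leading terms reproduces the bracket structure of \eqref{summary}, and every remaining contribution — the $\hat{\mathcal{B}}_0$-differentials, the averaged-time pieces (which reduce on $\mathcal{L}$), the $\langle a,m\rangle,\langle b,n\rangle$ terms and the scalars — is gathered into $G^{AB}_{jk},G^A_{jk},G^B_{jk}$, manifestly built from $f$, its $a,b$-derivatives, and its differentials of order $\leq 3$ in $A^{\mathcal{L}}_j,B^{\mathcal{L}}_k,\hat{\mathcal{B}}_0$.

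The main obstacle is precisely this last passage to the locus. Lemma~\ref{second derivatives} holds only on $\mathcal{K}$, while the outer operators $\partial_{s^{(j)}_1},\partial_{t^{(k)}_1}$ move off $\mathcal{K}$, so one cannot substitute the lemma before differentiating; instead one differentiates \eqref{Virasoro bis} off the locus and restricts to $\mathcal{L}$ only at the very end, discarding each term carrying an explicit factor $t,s,\alpha$ or $\beta$. The delicate point — the ``miracle'' the introduction stresses — is to verify that after this restriction all free $\partial_\alpha,\partial_\beta$ derivatives survive \emph{only} inside the packaged quantities $(\tfrac1q\partial_\alpha-\partial_{\alpha_j})f$ and $(\tfrac1p\partial_\beta-\partial_{\beta_k})f$ in the first slots of the Wronskians, with none leaking into the $G$'s, and that the scalar constants $\tfrac{m_j}{p}+\tfrac{n_k}{q}-\tfrac N{pq}$ (and their $A$- and $B$-analogues) assemble exactly as written.
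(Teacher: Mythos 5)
Your proposal is correct and ends up producing the very same equations as the paper, but the argument is decomposed differently, so a comparison is worthwhile. The paper first uses the Virasoro constraints \eqref{Virasoro bis} -- applied not only to $\tau^E_{\vec m\vec n}$ but also to the shifted tau-functions -- to trade $\partial_{s^{(j)}_1}$, $\partial_{t^{(k)}_1}$ acting on the log-ratios for the commuting operators $A_j$, $B_k$ (equations \eqref{A_j difference log 1}--\eqref{bilinear 4 bis}, all stated on $\mathcal{K}$), and only afterwards eliminates the shifted tau-functions via $[A_j,B_k]=0$ and the $j\leftrightarrow k$ antisymmetry. You eliminate first, by equality of mixed time-partials of the log-ratio $h$: this step is purely KP, holds identically in all variables, and needs no Virasoro information about the shifted tau-functions at all. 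The price is the one you flag: your time-PDE contains third-order time derivatives, which Lemma \ref{second derivatives} does not reach, so you must differentiate \eqref{Virasoro bis} off the locus in the time directions and restrict only at the end. Carried out, that conversion reads, e.g., $\partial_{s^{(j)}_1}(\partial^2_{t^{(k)}_2 s^{(j)}_1}f)|_{\mathcal K}=A_j(\partial^2_{t^{(k)}_2 s^{(j)}_1}f|_{\mathcal K})-\frac{2}{q}\,\partial^2_{s^{(j)}_1 t^{(k)}_1}f|_{\mathcal K}$ and $\partial_{t^{(k)}_1}(\partial^2_{s^{(j)}_2 t^{(k)}_1}f)|_{\mathcal K}=-B_k(\partial^2_{s^{(j)}_2 t^{(k)}_1}f|_{\mathcal K})+\frac{2}{p}\,\partial^2_{s^{(j)}_1 t^{(k)}_1}f|_{\mathcal K}$, the corrections coming from the $i=2$ coefficients in the time sums, while the derivatives of the common denominator convert with no correction. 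Those correction terms, proportional to the square of the denominator, are exactly the explicit $(\frac{2}{p}-\frac{2}{q})(\cdot)^2$ and $A_j(a_j+b_k)$-type terms that the paper's ordering produces in \eqref{bilinear tris 1} and stores in $G^{AB}_{jk}$; since the bracket is bilinear and Lemma \ref{second derivatives} identifies your numerators and denominator with minus the paper's bracket entries, the two routes give literally the same system, and the final packaging of the unknowns $(\frac1q\partial_\alpha-\partial_{\alpha_j})f$, $(\frac1p\partial_\beta-\partial_{\beta_k})f$ against the $G$'s proceeds identically via \eqref{A^L and B^L bis}. In summary: the paper's ordering never needs third-order time data, because once on $\mathcal K$ it differentiates only tangentially (the operators $A_j,B_k,\hat A_j,\hat B_k$ involve only $a,b,c,\alpha,\beta$-derivatives), at the cost of invoking Virasoro for the shifted tau-functions; your ordering keeps the elimination elementary and internal to the KP hierarchy, at the cost of the extra off-locus differentiation of the constraints, which you correctly identified as the delicate point.
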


\begin{proof}
Using equations (\ref{Virasoro bis}) we obtain on the locus $\mathcal{K}$
\begin{align}
&\frac{\partial}{\partial s^{(j)}_1}\log\frac{\tau_{\vec{m}-\vec{e_j}+\vec{e_k},\vec{n}}^E}{\tau_{\vec{m}+\vec{e_j}-\vec{e_k},\vec{n}}^E}\Bigg|_{\mathcal{K}}=A_j\log\frac{\tau_{\vec{m}-\vec{e_j}+\vec{e_k},\vec{n}}^E}{\tau_{\vec{m}+\vec{e_j}-\vec{e_k},\vec{n}}^E}+\frac{2}{q}(a_j-a_k),\notag\\
&\frac{\partial}{\partial s^{(j)}_1}\log\frac{\tau_{\vec{m}-\vec{e_j},\vec{n}-\vec{e_k}}^E}{\tau_{\vec{m}+\vec{e_j},\vec{n}+\vec{e_k}}^E}\Bigg|_{\mathcal{K}}=A_j\log\frac{\tau_{\vec{m}-\vec{e_j},\vec{n}-\vec{e_k}}^E}{\tau_{\vec{m}+\vec{e_j},\vec{n}+\vec{e_k}}^E}+\frac{2}{q}(a_j+b_k),\notag\\
&\frac{\partial}{\partial t^{(j)}_1}\log\frac{\tau_{\vec{m},\vec{n}+\vec{e_j}-\vec{e_k}}^E}{\tau_{\vec{m},\vec{n}-\vec{e_j}+\vec{e_k}}^E}\Bigg|_{\mathcal{K}}=-B_j\log\frac{\tau_{\vec{m},\vec{n}+\vec{e_j}-\vec{e_k}}^E}{\tau_{\vec{m},\vec{n}-\vec{e_j}+\vec{e_k}}^E}+\frac{2}{p}(b_j-b_k),\notag\\
&\frac{\partial}{\partial t^{(j)}_1}\log\frac{\tau_{\vec{m}+\vec{e_k},\vec{n}+\vec{e_j}}^E}{\tau_{\vec{m}-\vec{e_k},\vec{n}-\vec{e_j}}^E}\Bigg|_{\mathcal{K}}=-B_j\log\frac{\tau_{\vec{m}+\vec{e_k},\vec{n}+\vec{e_j}}^E}{\tau_{\vec{m}-\vec{e_k},\vec{n}-\vec{e_j}}^E}+\frac{2}{p}(a_k+b_j).\label{A_j difference log 1}
\end{align}

Substituting the first equation in (\ref{A_j difference log 1}) into the second equation in (\ref{bilinear}) and using lemma \ref{second derivatives}, we have on the locus $\mathcal{K}$
\begin{align}
A_j\log\frac{\tau_{\vec{m}-\vec{e_j}+\vec{e_k},\vec{n}}^E}{\tau_{\vec{m}+\vec{e_j}-\vec{e_k},\vec{n}}^E}=\frac{\Big(\hat{A}_j-\frac{1}{q}\Big)A_kf+\frac{2}{q^2}\big(\left\langle a,m\right\rangle+\left\langle b,n\right\rangle\big)}{A_jA_kf+\frac{m_j}{q}+\frac{m_k}{q}-\frac{N}{q^2}+\frac{2}{q^2}\big(\left\langle\alpha,m \right\rangle+\left\langle \beta,n\right\rangle\big)}-\frac{2}{q}(a_j-a_k).\label{bilinear 1 bis}
\end{align}
Similarly, we have
\begin{align}
&B_j\,\ln\frac{\tau_{\vec{m},\vec{n}-\vec{e_j}+\vec{e_{k}}}^E}{\tau_{\vec{m},\vec{n}+\vec{e_j}-\vec{e_{k}}}^E}=\frac{\Big(\hat{B}_j-\frac{1}{p}\Big)B_kf+\frac{2}{p^2}\big(\left\langle a,m\right\rangle+\left\langle b,n\right\rangle\big)}{B_jB_kf+\frac{n_j}{p}+\frac{n_k}{p}-\frac{N}{p^2}+\frac{2}{p^2}\big(\left\langle\alpha,m \right\rangle+\left\langle \beta,n\right\rangle\big)}-\frac{2}{p}(b_j-b_k),\label{bilinear 2 bis}\\
&B_j\,\ln\frac{\tau_{\vec{m}-\vec{e_k},\vec{n}-\vec{e_j}}^E}{\tau_{\vec{m}+\vec{e_k},\vec{n}+\vec{e_j}}^E}=\frac{-\Big(\hat{B}_j-\frac{1}{p}\Big)A_kf-\frac{2}{pq}\big(\left\langle a,m\right\rangle+\left\langle b,n\right\rangle\big)}{-A_kB_jf-\frac{m_k}{p}-\frac{n_j}{q}+\frac{N}{pq}-\frac{2}{pq}\big(\left\langle \alpha,m\right\rangle+\left\langle \beta,n\right\rangle\big)}-\frac{2}{p}(a_k+b_j),\label{bilinear 3 bis}\\
&A_j\,\ln\frac{\tau_{\vec{m}-\vec{e_j},\vec{n}-\vec{e_k}}^E}{\tau_{\vec{m}+\vec{e_j},\vec{n}+\vec{e_k}}^E}=\frac{-\Big(\hat{A}_j-\frac{1}{q}\Big)B_kf-\frac{2}{pq}\big(\left\langle a,m\right\rangle+\left\langle b,n\right\rangle\big)}{-A_jB_kf-\frac{m_j}{p}-\frac{n_k}{q}+\frac{N}{pq}-\frac{2}{pq}\big(\left\langle \alpha,m\right\rangle+\left\langle \beta,n\right\rangle\big)}-\frac{2}{q}(a_j+b_k).\label{bilinear 4 bis}
\end{align}
Let us denote equations (\ref{bilinear 1 bis})-(\ref{bilinear 4 bis}), with indices chosen as above, by $(\ref{bilinear 1 bis})_{jk}$, $(\ref{bilinear 2 bis})_{jk}$, $(\ref{bilinear 3 bis})_{jk}$ and $(\ref{bilinear 4 bis})_{jk}$. We compute $A_j(\ref{bilinear 3 bis})_{kj}-B_k(\ref{bilinear 4 bis})_{jk}$, and we obtain
\begin{multline*}
0=A_j\Bigg(\frac{-\Big(\hat{B}_k-\frac{1}{p}\Big)A_jf-\frac{2}{pq}\big(\left\langle a,m\right\rangle+\left\langle b,n\right\rangle\big)}{-A_jB_kf-\frac{m_j}{p}-\frac{n_k}{q}+\frac{N}{pq}-\frac{2}{pq}\big(\left\langle \alpha,m\right\rangle+\left\langle \beta,n\right\rangle\big)}-\frac{2}{p}(a_j+b_k)\Bigg)\\
-B_k\Bigg(\frac{-\Big(\hat{A}_j-\frac{1}{q}\Big)B_kf-\frac{2}{pq}\big(\left\langle a,m\right\rangle+\left\langle b,n\right\rangle\big)}{-A_jB_kf-\frac{m_j}{p}-\frac{n_k}{q}+\frac{N}{pq}-\frac{2}{pq}\big(\left\langle \alpha,m\right\rangle+\left\langle \beta,n\right\rangle\big)}-\frac{2}{q}(a_j+b_k)\Bigg),\notag
\end{multline*} 
since $[A_j,B_k]=0$. Define $\sigma(a,b)=\left\langle a,m\right\rangle+\left\langle b,n\right\rangle$. As $A_j$ and $B_k$ are first order differential operators, we have
\begin{multline}\label{bilinear tris 1}
0=\frac{\Big\{\Big(\hat{B}_k-\frac{1}{p}\Big)A_jf+\frac{2\sigma(a,b)}{pq},A_jB_kf+\frac{m_j}{p}+\frac{n_k}{q}-\frac{N}{pq}+\frac{2\sigma(\alpha,\beta)}{pq}\Big\}_{A_j}}{\Big(-A_jB_kf-\frac{m_j}{p}-\frac{n_k}{q}+\frac{N}{pq}-\frac{2\sigma(\alpha,\beta)}{pq}\Big)^2}\\
-\frac{\Big\{\Big(\hat{A}_j-\frac{1}{q}\Big)B_kf+\frac{2\sigma(a,b)}{pq},A_jB_kf+\frac{m_j}{p}+\frac{n_k}{q}-\frac{N}{pq}+\frac{2\sigma(\alpha,\beta)}{pq}\Big\}_{B_k}}{\Big(-A_jB_kf-\frac{m_j}{p}-\frac{n_k}{q}+\frac{N}{pq}-\frac{2\sigma(\alpha,\beta)}{pq}\Big)^2}-\frac{2}{p}A_j(a_j+b_k)+\frac{2}{q}B_k(a_j+b_k).
\end{multline}
Similarly, we compute, for $j\neq k$, $A_k(\ref{bilinear 1 bis})_{jk}+A_j(\ref{bilinear 1 bis})_{kj}$ and $B_k(\ref{bilinear 2 bis})_{jk}+B_j(\ref{bilinear 2 bis})_{kj}$, and we obtain
\begin{multline}\label{bilinear tris 2}
0=\frac{\Big\{\Big(\hat{A}_j-\frac{1}{q}\Big)A_kf+\frac{2\sigma(a,b)}{q^2},A_jA_kf+\frac{m_j}{q}+\frac{m_k}{q}-\frac{N}{q^2}+\frac{2\sigma(\alpha,\beta)}{q^2}\Big\}_{A_k}}{\Big(A_jA_kf+\frac{m_j}{q}+\frac{m_k}{q}-\frac{N}{q^2}+\frac{2\sigma(\alpha,\beta)}{q^2}\Big)^2}\\
+\frac{\Big\{\Big(\hat{A}_k-\frac{1}{q}\Big)A_jf+\frac{2\sigma(a,b)}{q^2},A_kA_jf+\frac{m_k}{q}+\frac{m_j}{q}-\frac{N}{q^2}+\frac{2\sigma(\alpha,\beta)}{q^2}\Big\}_{A_j}}{\Big(A_jA_kf+\frac{m_j}{q}+\frac{m_k}{q}-\frac{N}{q^2}+\frac{2\sigma(\alpha,\beta)}{q^2}\Big)^2}-\frac{4}{q},
\end{multline}
and
\begin{multline}\label{bilinear tris 3}
0=\frac{\Big\{\Big(\hat{B}_j-\frac{1}{p}\Big)B_kf+\frac{2\sigma(a,b)}{p^2},B_jB_kf+\frac{n_j}{p}+\frac{n_k}{p}-\frac{N}{p^2}+\frac{2\sigma(\alpha,\beta)}{p^2}\Big\}_{B_k}}{\Big(B_jB_kf+\frac{n_j}{p}+\frac{n_k}{p}-\frac{N}{p^2}+\frac{2\sigma(\alpha,\beta)}{p^2}\Big)^2}\\
+\frac{\Big\{\Big(\hat{B}_k-\frac{1}{p}\Big)B_jf+\frac{2\sigma(a,b)}{p^2},B_kB_jf+\frac{n_k}{p}+\frac{n_j}{p}-\frac{N}{p^2}+\frac{2\sigma(\alpha,\beta)}{p^2}\Big\}_{B_j}}{\Big(B_kB_jf+\frac{n_k}{p}+\frac{n_j}{p}-\frac{N}{p^2}+\frac{2\sigma(\alpha,\beta)}{p^2}\Big)^2}-\frac{4}{p}.
\end{multline}

Using (\ref{A^L and B^L bis}) we compute
\begin{align}
A_jA_kf&=\Bigg(A_{j}^{\mathcal{L}}-\frac{2}{q}\Big(\sum_{l=1}^{q-1}\alpha_l\frac{\partial}{\partial a_l}+\sum_{l=1}^{p-1}\beta_l\frac{\partial}{\partial b_l}\Big)\Bigg)\Bigg(A_{k}^{\mathcal{L}}-\frac{2}{q}\Big(\sum_{l=1}^{q-1}\alpha_l\frac{\partial}{\partial a_l}+\sum_{l=1}^{p-1}\beta_l\frac{\partial}{\partial b_l}\Big)\Bigg)\Bigg)f\notag\\
&=A_{j}^{\mathcal{L}}A_{k}^{\mathcal{L}}f+\mathcal{O}(\alpha,\beta).\notag
\end{align}
Similarly we have
\begin{align}
&A_jB_kf=A_{j}^{\mathcal{L}}B_{k}^{\mathcal{L}}f+\mathcal{O}(\alpha,\beta),&B_jB_kf=B_{j}^{\mathcal{L}}B_{k}^{\mathcal{L}}f+\mathcal{O}(\alpha,\beta),\notag
\end{align}
and
\begin{align}
&\Big(\hat{A}_j-\frac{1}{q}\Big)A_kf=\Big(\frac{1}{q}\hat{\mathcal{B}}_0-(\partial_{\alpha_j}-\frac{1}{q}\partial_\alpha)-\frac{1}{q}\Big)A_{k}^{\mathcal{L}}f+\frac{2}{q}(\partial_{a_j}-\frac{1}{q}\partial_a)f+\mathcal{O}(\alpha,\beta),\notag\\
&\Big(\hat{A}_j-\frac{1}{q}\Big)B_kf=\Big(\frac{1}{q}\hat{\mathcal{B}}_0-(\partial_{\alpha_j}-\frac{1}{q}\partial_\alpha)-\frac{1}{q}\Big)B_{k}^{\mathcal{L}}f+\frac{2}{p}(\partial_{a_j}-\frac{1}{q}\partial_a)f+\mathcal{O}(\alpha,\beta),\notag\\
&\Big(\hat{B}_j-\frac{1}{p}\Big)A_kf=\Big(\frac{1}{p}\hat{\mathcal{B}}_0-(\partial_{\beta_j}-\frac{1}{p}\partial_\beta)-\frac{1}{p}\Big)A_{k}^{\mathcal{L}}f+\frac{2}{q}(\partial_{b_j}-\frac{1}{p}\partial_b)f+\mathcal{O}(\alpha,\beta),\notag\\
&\Big(\hat{B}_j-\frac{1}{p}\Big)B_kf=\Big(\frac{1}{p}\hat{\mathcal{B}}_0-(\partial_{\beta_j}-\frac{1}{p}\partial_\beta)-\frac{1}{p}\Big)B_{k}^{\mathcal{L}}f+\frac{2}{p}(\partial_{b_j}-\frac{1}{p}\partial_b)f+\mathcal{O}(\alpha,\beta).\notag
\end{align}
Consequently, on the locus $\mathcal{L}$ the equation (\ref{bilinear tris 1}) can be written
\begin{multline*}
0=\frac{\Big\{\Big(\frac{1}{p}\hat{\mathcal{B}}_0-(\partial_{\beta_k}-\frac{1}{p}\partial_\beta)-\frac{1}{p}\Big)A_{j}^{\mathcal{L}}f+\frac{2}{q}(\partial_{b_k}-\frac{1}{p}\partial_b)f+\frac{2\sigma(a,b)}{pq},A_{j}^{\mathcal{L}}B_{k}^{\mathcal{L}}f+\frac{m_j}{p}+\frac{n_k}{q}-\frac{N}{pq}\Big\}_{A_{j}^{\mathcal{L}}}}{\Big(-A_{j}^{\mathcal{L}}B_{k}^{\mathcal{L}}f-\frac{m_j}{p}-\frac{n_k}{q}+\frac{N}{pq}\Big)^2}\\
-\frac{\Big\{\Big(\frac{1}{q}\hat{\mathcal{B}}_0-(\partial_{\alpha_j}-\frac{1}{q}\partial_\alpha)-\frac{1}{q}\Big)B_{k}^{\mathcal{L}}f+\frac{2}{p}(\partial_{a_j}-\frac{1}{q}\partial_a)f+\frac{2\sigma(a,b)}{pq},A_{j}^{\mathcal{L}}B_{k}^{\mathcal{L}}f+\frac{m_j}{p}+\frac{n_k}{q}-\frac{N}{pq}\Big\}_{B_{k}^{\mathcal{L}}}}{\Big(-A_{j}^{\mathcal{L}}B_{k}^{\mathcal{L}}f-\frac{m_j}{p}-\frac{n_k}{q}+\frac{N}{pq}\Big)^2}+\frac{2}{p}-\frac{2}{q}.
\end{multline*}
Putting all the terms which do not contain derivatives of $f$ with respect to $\alpha_i$'s or $\beta_j$'s in the left hand side, we obtain
\begin{multline*}
G_{jk}^{AB}=\Big\{(\frac{1}{p}\partial_\beta-\partial_{\beta_k})A_{j}^{\mathcal{L}}f,A_{j}^{\mathcal{L}}B_{k}^{\mathcal{L}}f+\frac{m_j}{p}+\frac{n_k}{q}-\frac{N}{pq}\Big\}_{A_{j}^{\mathcal{L}}}\\
-\Big\{(\frac{1}{q}\partial_\alpha-\partial_{\alpha_j})B_{k}^{\mathcal{L}}f,A_{j}^{\mathcal{L}}B_{k}^{\mathcal{L}}f+\frac{m_j}{p}+\frac{n_k}{q}-\frac{N}{pq}\Big\}_{B_{k}^{\mathcal{L}}},
\end{multline*}
where
\begin{align}
&G_{jk}^{AB}:=\Big(\frac{2}{q}-\frac{2}{p}\Big)\Big(-A_{j}^{\mathcal{L}}B_{k}^{\mathcal{L}}f-\frac{m_j}{p}-\frac{n_k}{q}+\frac{N}{pq}\Big)^2\notag\\
&\phantom{G_{ij}:=}-\Big\{\Big(\frac{1}{p}\hat{\mathcal{B}}_0-\frac{1}{p}\Big)A_{j}^{\mathcal{L}}f+\frac{2}{q}(\partial_{b_k}-\frac{1}{p}\partial_b)f+\frac{2\sigma(a,b)}{pq},A_{j}^{\mathcal{L}}B_{k}^{\mathcal{L}}f+\frac{m_j}{p}+\frac{n_k}{q}-\frac{N}{pq}\Big\}_{A_{j}^{\mathcal{L}}}\notag\\
&\phantom{G_{ij}:=}+\Big\{\Big(\frac{1}{q}\hat{\mathcal{B}}_0-\frac{1}{q}\Big)B_{k}^{\mathcal{L}}f+\frac{2}{p}(\partial_{a_j}-\frac{1}{q}\partial_a)f+\frac{2\sigma(a,b)}{pq},A_{j}^{\mathcal{L}}B_{k}^{\mathcal{L}}f+\frac{m_j}{p}+\frac{n_k}{q}-\frac{N}{pq}\Big\}_{B_{k}^{\mathcal{L}}}.\notag
\end{align}

Similarly, on the locus $\mathcal{L}$, the equations (\ref{bilinear tris 2}) and (\ref{bilinear tris 3}) can be written
\begin{multline*}
G_{jk}^{A}=\Big\{(\frac{1}{q}\partial_\alpha-\partial_{\alpha_j})A_{k}^{\mathcal{L}}f,A_{j}^{\mathcal{L}}A_{k}^{\mathcal{L}}f+\frac{m_j}{q}+\frac{m_k}{q}-\frac{N}{q^2}\Big\}_{A_{k}^{\mathcal{L}}}\\
+\Big\{(\frac{1}{q}\partial_\alpha-\partial_{\alpha_k})A_{j}^{\mathcal{L}}f,A_{j}^{\mathcal{L}}A_{k}^{\mathcal{L}}f+\frac{m_j}{q}+\frac{m_k}{q}-\frac{N}{q^2}\Big\}_{A_{j}^{\mathcal{L}}},
\end{multline*}
\begin{multline*}
G_{jk}^{B}=\Big\{(\frac{1}{p}\partial_\beta-\partial_{\beta_j})B_{k}^{\mathcal{L}}f,B_{j}^{\mathcal{L}}B_{k}^{\mathcal{L}}f+\frac{n_j}{p}+\frac{n_k}{p}-\frac{N}{p^2}\Big\}_{B_{k}^{\mathcal{L}}}\\
+\Big\{(\frac{1}{p}\partial_\beta-\partial_{\beta_k})B_{j}^{\mathcal{L}}f,B_{j}^{\mathcal{L}}B_{k}^{\mathcal{L}}f+\frac{n_j}{p}+\frac{n_k}{p}-\frac{N}{p^2}\Big\}_{B_{j}^{\mathcal{L}}},
\end{multline*}
where
\begin{align}
&G_{jk}^A:=\frac{4}{q}\Big(A_{j}^{\mathcal{L}}A_{k}^{\mathcal{L}}f+\frac{m_j}{q}+\frac{m_k}{q}-\frac{N}{q^2}\Big)^2\notag\\
&\phantom{G_{jk}^A:=}-\Big\{\Big(\frac{1}{q}\hat{\mathcal{B}}_0-\frac{1}{q}\Big)A_{k}^{\mathcal{L}}f+\frac{2}{q}(\partial_{a_j}-\frac{1}{q}\partial_a)f+\frac{2\sigma(a,b)}{q^2},A_{j}^{\mathcal{L}}A_{k}^{\mathcal{L}}f+\frac{m_j}{q}+\frac{m_k}{q}-\frac{N}{q^2}\Big\}_{A_{k}^{\mathcal{L}}}\notag\\
&\phantom{G_{jk}^A:=}-\Big\{\Big(\frac{1}{q}\hat{\mathcal{B}}_0-\frac{1}{q}\Big)A_{j}^{\mathcal{L}}f+\frac{2}{q}(\partial_{a_k}-\frac{1}{q}\partial_a)f+\frac{2\sigma(a,b)}{q^2},A_{j}^{\mathcal{L}}A_{k}^{\mathcal{L}}f+\frac{m_j}{q}+\frac{m_k}{q}-\frac{N}{q^2}\Big\}_{A_{j}^{\mathcal{L}}},\notag\\
&G_{jk}^B:=\frac{4}{p}\Big(B_{j}^{\mathcal{L}}B_{k}^{\mathcal{L}}f+\frac{n_j}{p}+\frac{n_k}{p}-\frac{N}{p^2}\Big)^2\notag\\
&\phantom{G_{jk}^B:=}-\Big\{\Big(\frac{1}{p}\hat{\mathcal{B}}_0-\frac{1}{p}\Big)B_{k}^{\mathcal{L}}f+\frac{2}{p}(\partial_{b_j}-\frac{1}{p}\partial_b)f+\frac{2\sigma(a,b)}{p^2},B_{j}^{\mathcal{L}}B_{k}^{\mathcal{L}}f+\frac{n_j}{p}+\frac{n_k}{p}-\frac{N}{p^2}\Big\}_{B_{k}^{\mathcal{L}}}\notag\\
&\phantom{G_{jk}^B:=}-\Big\{\Big(\frac{1}{p}\hat{\mathcal{B}}_0-\frac{1}{p}\Big)B_{j}^{\mathcal{L}}f+\frac{2}{p}(\partial_{b_k}-\frac{1}{p}\partial_b)f+\frac{2\sigma(a,b)}{p^2},B_{j}^{\mathcal{L}}B_{k}^{\mathcal{L}}f+\frac{n_j}{p}+\frac{n_k}{p}-\frac{N}{p^2}\Big\}_{B_{j}^{\mathcal{L}}}.\notag
\end{align}
\end{proof}

In order to obtain a PDE for $f=\log\tau_{\vec{m},\vec{n}}^E(0;a,b)$ or for $\log P_{p,q}(E,a,b)$, we need to eliminate the partial derivatives of $f$ with respect to $\alpha_1,\dots, \alpha_{q-1}$, $\beta_1,\dots,\beta_{p-1}$ from the equations (\ref{summary}) in Theorem \ref{system of equations for log tau with unknowns}. Define
\begin{align}
&X_i=(\frac{1}{q}\partial_\alpha-\partial_{\alpha_i})f\big|_{\mathcal{L}},\quad 1\leq i\leq q,\qquad\text{and}\qquad Y_i=(\frac{1}{p}\partial_\beta-\partial_{\beta_i})f\big|_{\mathcal{L}},\quad 1\leq i\leq p.\notag
\end{align}
Note that we have $\sum_{i=1}^{q}X_i=\sum_{i=1}^{p}Y_i=0$, and $\sum_{i=1}^{q}A_i^{\mathcal{L}}=\sum_{i=1}^{p}B_i^{\mathcal{L}}=\mathcal{B}_{-1}$. Consequently, there are among $A_i^{\mathcal{L}}$, $1\leq i\leq q$, and $B_j^{\mathcal{L}}$, $1\leq j\leq p$, only $p+q-1$ linearly independent differential operators. Set $A_q^{\mathcal{L}}=\mathcal{B}_{-1}-\sum_{i=1}^{q-1}A_i^{\mathcal{L}}$ and $B_p^{\mathcal{L}}=\mathcal{B}_{-1}-\sum_{i=1}^{p-1}B_i^{\mathcal{L}}$.\\

With these notations, the equations (\ref{summary}) can be written
\begin{align}
&\Big\{A_{j}^{\mathcal{L}}Y_k,A_{j}^{\mathcal{L}}B_{k}^{\mathcal{L}}f+\frac{m_j}{p}+\frac{n_k}{q}-\frac{N}{pq}\Big\}_{A_{j}^{\mathcal{L}}}-\Big\{B_{k}^{\mathcal{L}}X_j,A_{j}^{\mathcal{L}}B_{k}^{\mathcal{L}}f+\frac{m_j}{p}+\frac{n_k}{q}-\frac{N}{pq}\Big\}_{B_{k}^{\mathcal{L}}}\notag\\
&\phantom{\Big\{A_{j}^{\mathcal{L}}Y_k,A_{j}^{\mathcal{L}}B_{k}^{\mathcal{L}}f+\frac{m_j}{p}+\frac{n_k}{q}-\frac{N}{pq}\Big\}_{A_{j}^{\mathcal{L}}}-\Big\{B_{k}^{\mathcal{L}}X_j}=G_{jk}^{AB},\quad 1\leq j\leq q, 1\leq k\leq p,\notag\\
&\Big\{A_{k}^{\mathcal{L}}X_j,A_{j}^{\mathcal{L}}A_{k}^{\mathcal{L}}f+\frac{m_j}{q}+\frac{m_k}{q}-\frac{N}{q^2}\Big\}_{A_{k}^{\mathcal{L}}}+\Big\{A_{j}^{\mathcal{L}}X_k,A_{j}^{\mathcal{L}}A_{k}^{\mathcal{L}}f+\frac{m_j}{q}+\frac{m_k}{q}-\frac{N}{q^2}\Big\}_{A_{j}^{\mathcal{L}}}\notag\\
&\phantom{\Big\{A_{j}^{\mathcal{L}}Y_k,A_{j}^{\mathcal{L}}B_{k}^{\mathcal{L}}f+\frac{m_j}{p}+\frac{n_k}{q}-\frac{N}{pq}\Big\}_{A_{j}^{\mathcal{L}}}-\Big\{B_{k}^{\mathcal{L}}X_j}=G_{jk}^{A},\quad 1\leq j<k\leq q,\notag\\
&\Big\{B_{k}^{\mathcal{L}}Y_j,B_{j}^{\mathcal{L}}B_{k}^{\mathcal{L}}f+\frac{n_j}{p}+\frac{n_k}{p}-\frac{N}{p^2}\Big\}_{B_{k}^{\mathcal{L}}}+\Big\{B_{j}^{\mathcal{L}}Y_k,B_{j}^{\mathcal{L}}B_{k}^{\mathcal{L}}f+\frac{n_j}{p}+\frac{n_k}{p}-\frac{N}{p^2}\Big\}_{B_{j}^{\mathcal{L}}}\notag\\
&\phantom{\Big\{A_{j}^{\mathcal{L}}Y_k,A_{j}^{\mathcal{L}}B_{k}^{\mathcal{L}}f+\frac{m_j}{p}+\frac{n_k}{q}-\frac{N}{pq}\Big\}_{A_{j}^{\mathcal{L}}}-\Big\{B_{k}^{\mathcal{L}}X_j}=G_{jk}^{B},\quad 1\leq j<k\leq p,\notag
\end{align}
or
\begin{align}
&\big(A_{j}^{\mathcal{L}}\big)^2Y_k-\big(B_{k}^{\mathcal{L}}\big)^2X_j-\Big(\frac{1}{c_{jk}^{AB}}A_{j}^{\mathcal{L}}c_{jk}^{AB}\Big)A_{j}^{\mathcal{L}}Y_k+\Big(\frac{1}{c_{jk}^{AB}}B_{k}^{\mathcal{L}}c_{jk}^{AB}\Big)B_{k}^{\mathcal{L}}X_j=g_{jk}^{AB},\quad 1\leq j\leq q, 1\leq k\leq p,\notag\\
&\big(A_{j}^{\mathcal{L}})^2X_k+\big(A_{k}^{\mathcal{L}}\big)^2X_j-\Big(\frac{1}{c_{jk}^A}A_{j}^{\mathcal{L}}c_{jk}^A\Big)A_{j}^{\mathcal{L}}X_k-\Big(\frac{1}{c_{jk}^A}A_{k}^{\mathcal{L}}c_{jk}^A\Big)A_{k}^{\mathcal{L}}X_j=g_{jk}^{A},\quad 1\leq j<k\leq q,\label{summary 2}\\
&\big(B_{j}^{\mathcal{L}}\big)^2Y_k+\big(B_{k}^{\mathcal{L}}\big)^2Y_j-\Big(\frac{1}{c_{jk}^B}B_{j}^{\mathcal{L}}c_{jk}^B\Big)B_{j}^{\mathcal{L}}Y_k-\Big(\frac{1}{c_{jk}^B}B_{k}^{\mathcal{L}}c_{jk}^B\Big)B_{k}^{\mathcal{L}}Y_j=g_{jk}^{B},\quad 1\leq j<k\leq p,\notag
\end{align}
where
\begin{gather}
c_{jk}^{AB}:=A_{j}^{\mathcal{L}}B_{k}^{\mathcal{L}}f+\frac{m_j}{p}+\frac{n_k}{q}-\frac{N}{pq},\qquad c_{jk}^A:=A_{j}^{\mathcal{L}}A_{k}^{\mathcal{L}}f+\frac{m_j}{q}+\frac{m_k}{q}-\frac{N}{q^2},\notag\\
c_{jk}^B:=B_{j}^{\mathcal{L}}B_{k}^{\mathcal{L}}f+\frac{n_j}{p}+\frac{n_k}{p}-\frac{N}{p^2},\label{c_ij}
\end{gather}
and
\begin{align}
g_{jk}^{AB}:=\frac{G_{jk}^{AB}}{c_{jk}^{AB}},\qquad g_{jk}^{A}:=\frac{G_{jk}^{A}}{c_{jk}^{A}},\qquad g_{jk}^B:=\frac{G_{jk}^{B}}{c_{jk}^{B}}.\label{g_ij}
\end{align}
We have thus a system of $M=\frac{1}{2}(p+q)(p+q-1)$ linear equations in the $p+q-2$ unknown functions $X_1,\dots,X_{q-1}$ and $Y_1,\dots,Y_{p-1}$ and at most all their first and second order derivatives with respect to the independent commuting differential operators $A_{i}^{\mathcal{L}}$, $1\leq i\leq q-1$, $B_{j}^{\mathcal{L}}$, $1\leq j\leq p-1$, and $\mathcal{B}_{-1}$. We think at all these quantities as unknowns. At this point, we have a system with a smaller number of linear equations then unknowns. The general strategy is to keep differentiating the equations and show that at some point we must reach a balance between the number of equations and the number of unknowns, leading to the vanishing of a determinant \emph{at the first point this occurs}, which must yield a nontrivial relation. Let $Z_M$ be the set of linear equations \eqref{summary 2}, and define
\begin{align}
Z_M^K:=[1+\mathcal{B}_{-1}+A_{1}^{\mathcal{L}}+\dots+A_{q-1}^{\mathcal{L}}+B_{1}^{\mathcal{L}}+\dots+B_{p-1}^{\mathcal{L}}]^KZ_M,\notag
\end{align}
the set of equations obtained by taking the equations of $Z_M$ and all their derivatives up to the $K$th order with respect to the differential operators $\mathcal{B}_{-1},A_{1}^{\mathcal{L}},\dots,A_{q-1}^{\mathcal{L}},B_{1}^{\mathcal{L}},\dots,B_{p-1}^{\mathcal{L}}$. The number of equations in $Z_M^K$ is simply $M$ times the number of monomials of degree $K$ chosen from a set of $p+q$ variables, i.e.
\begin{align}
M\binom{p+q+K-1}{K}&=\frac{1}{2}(p+q)\frac{(K+p+q-1)(K+p+q-2)\dots(K+1)}{(p+q-2)!}.\notag
\end{align}
The set of equations $Z_M^K$ is a set of linear equations in the $p+q-2$ unknown functions $X_1,\dots,X_{q-1}$ and $Y_1,\dots,Y_{p-1}$ and at most all their first, second, \dots, $(K+2)^{\text{th}}$ order derivatives with respect to the differential operators $A_{i}^{\mathcal{L}}$, $1\leq i\leq q-1$, $B_{j}^{\mathcal{L}}$, $1\leq j\leq p-1$, and $\mathcal{B}_{-1}$. Let $L$ be the number of unknowns in these equations. Then 
\begin{align}
L&\leq (p+q-2)\binom{p+q+K+2-1}{K+2}=(p+q-2)\frac{(K+p+q+1)(K+p+q)\dots(K+3)}{(p+q-1)!}.\notag
\end{align}
From these considerations, it is clear that a sufficient condition to have $Card(Z_M^K)> L$ is
\begin{align}
\frac{1}{2}(p+q)\frac{(K+p+q-1)!}{K!(p+q-2)!}> (p+q-2)\frac{(K+p+q+1)!}{(K+2)!(p+q-1)!},\notag
\end{align}
or, simplifying this expression, 
\begin{align}
(x^2-3x+4)K^2+(-x^2+3x+4)K-2x(x^2-2x-1)> 0,\notag
\end{align}
where we have noted $x=p+q$. We observe that, with $p$ and $q$ fixed, for $K$ sufficiently large, this inequality is satisfied, since $x^2-3x+4> 0$. Let $K^*$ be the smallest value of $K$ such that this inequality is satisfied, and note $k^*$ the number of equations in $Z_M^{K^*}$, i.e.
\begin{align}
k^*=\frac{1}{2}(p+q)(p+q-1)\binom{p+q+K^*-1}{K^*},\notag
\end{align}
and $L^*$ the number of unknowns in the set of linear equations $Z_M^{K^*}$. Let us note these unknowns $x_1,\dots,x_{L^*}$. Then the system of $k^*$ linear equations that we have obtained can be written
\begin{align}
\big[a_{ij}(f)\big]_{\substack{1\leq i\leq k^*\\ 1\leq j\leq L^*+1}}\left(\begin{array}{llll}1 \\ x_1 \\
\vdots \\
x_{L^*} \end{array}\right)=0.\notag
\end{align}
As $k^*>L^*$, we can select the $L^*+1$ first equations in this system and construct the following system
\begin{align}
\big[a_{ij}(f)\big]_{\substack{1\leq i\leq L^*+1\\ 1\leq j\leq L^*+1}}\left(\begin{array}{llll}1 \\ x_1 \\
\vdots \\
x_{L^*} \end{array}\right)=0.\notag
\end{align}
But then, necessarily, we have
\begin{align}
\det\big[a_{ij}(f)\big]_{\substack{1\leq i\leq L^*+1\\ 1\leq j\leq L^*+1}}=0.\notag
\end{align}
This is a PDE of order $(K^*+3)$ for the function $f$, with variables $a_1,\dots, a_{q-1}$, $b_1,\dots, b_{p-1}$ and $c_1,\dots,c_{2r}$. Since $f=\log\tau_{\vec{m},\vec{n}}^E(0;a,b)=\log P_{p,q}(E;a,b)+\log\tau_{\vec{m},\vec{n}}^\mathbb{R}(0;a,b)$, this yields a nonlinear PDE of order $K^*+3$ in the variables $a_1,\dots,a_{q-1}$, $b_1,\dots,b_{p-1}$ and the endpoints of $E$ for $\log P_{p,q}(E;a,b)$, and thus throuhg formula \eqref{proba-normalized problem} for $\log\mathbb{P}_{b_1,\dots,b_p}^{a_1,\dots,a_q}\big(all\ x_i(t)\in E\big)$, in terms of the input $\log\tau_{\vec{m},\vec{n}}^\mathbb{R}(0;a,b)$, which we think of as known. We thus have proven Theorem \ref{Theorem introduction}.


\section{Non-intersecting Brownian motions with two starting points and two ending points}

In this section we consider a particular situation of the problem studied in the preceding sections. Consider $N$ non-intersecting Brownian motions $x_1(t),x_2(t),\dots,x_N(t)$ in $\mathbb{R}$, conditionned to start at time $t=0$ at two different points and to end up at $t=1$ in two different points, with the coordinates of the starting points and the coordinates of the ending points both satisfying a linear condition. In this particular case, all the results of the preceding sections hold with $p=q=2$. Consequently, by virtue of Theorem \ref{Theorem introduction}, we know that the probability to find all the particles in a certain set $E=\bigcup_{i=1}^{r}[c_{2i-1},c_{2i}]\subset\mathbb{R}$ at a given time $0<t<1$, satisfies a nonlinear PDE of order $6$, the variables being the coordinates of the starting and ending points, and the endpoints of the set $E$. The aim of this section is to improve the result of Theorem \ref{Theorem introduction} in the particular case when $p=q=2$ and to describe this PDE more precisely.

For the sake of clarity, we first recall some notations. Consider $N$ non-intersecting Brownian motions $x_1(t),x_2(t),\dots,x_N(t)$ in $\mathbb{R}$, with $m_1$ particles leaving from $a$ and $m_2$ particles leaving from $-a$, and $n_1$ particles ending in $b$ and $n_2$ particles ending in $-b$. We denote
\begin{align}
\mathbb{P}_{+b,-b}^{+a,-a}\big(\text{all }x_i(t)\in E\big)&:=\mathbb{P}\left(\text{all }x_i(t)\in E\Bigg|\begin{array}{llll}\big(x_1(0),\dots,x_N(0)\big)=\big(\underbrace{a,\dots,a}_{m_1},\underbrace{-a,\dots,-a}_{m_2}\big) \\ \big(x_1(1),\dots,x_N(1)\big)=\big(\underbrace{b,\dots,b}_{n_1},\underbrace{-b,\dots,-b}_{n_2}\big) \end{array}\right),\notag
\end{align}
the probability to find all the particles in a set $E\subset\mathbb{R}$, at a given time $0<t<1$. We have
\begin{align}
\mathbb{P}_{b,-b}^{a,-a}\big(\text{all }x_i(t)\in E\big)=P_{2,2}\Big(\sqrt{\frac{2}{t(1-t)}}\,E;\sqrt{\frac{2(1-t)}{t}}\,a,\sqrt{\frac{2t}{1-t}}\,b\Big),\notag
\end{align}
with the normalized problem defined in (\ref{normalized problem}). We deform $P_{2,2}(E;a,b)$ by adding four families of extra time variables
\begin{align}
t^{(1)}=\big(t_1^{(1)},t_2^{(1)},\dots\big),\quad t^{(2)}=\big(t_1^{(2)},t_2^{(2)},\dots\big),\notag\\
s^{(1)}=\big(s_1^{(1)},s_2^{(1)},\dots\big),\quad s^{(2)}=\big(s_1^{(2)},s_2^{(2)},\dots\big),\notag
\end{align}
and the two parameters $\alpha,\beta\in\mathbb{C}$. We have
\begin{align}
P_{2,2}\big(E;a,b;(t,s),(\alpha,\beta)\big)&=\frac{\tau_{m_1m_2;n_1,n_2}^E(t,s;\alpha,\beta;a,b)}{\tau_{m_1m_2;n_1,n_2}^\mathbb{R}(t,s;\alpha,\beta;a,b)},\label{deformed normalized problem p=q=2}
\end{align}
with $\tau_{m_1m_2;n_1,n_2}^E(t,s;\alpha,\beta;a,b)$ defined in (\ref{tau}), and where $(t,s)=\big(t^{(1)},t^{(2)};s^{(1)},s^{(2)}\big)$. The function $P_{2,2}(E;a,b)$ is then simply given by $P_{2,2}\big(E;a,b;(t,s),(\alpha,\beta)\big)$ evaluated along the locus $\mathcal{L}=\{(t,s)=(0,0),\alpha=\beta=0\}$. We define the function $f:=\log\tau_{m_1,m_2;n_1,n_2}^E(t,s;\alpha,\beta;a,b)$. The following particular version of Theorem \ref{system of equations for log tau with unknowns} holds.
\begin{theorem}\label{particular version p=q=2}
Put $X=-\frac{1}{2}\frac{\partial f}{\partial \alpha}\big|_{\mathcal{L}}$ and $Y=-\frac{1}{2}\frac{\partial f}{\partial \beta}\big|_{\mathcal{L}}$. The function $f=\log\tau_{m_1,m_2;n_1,n_2}^E(t,s;\alpha,\beta;a,b)$ satisfies the following $6$ equations on the locus $\mathcal{L}$
\begin{gather*}
\Big\{A_{1}^{\mathcal{L}}Y,A_{1}^{\mathcal{L}}B_{1}^{\mathcal{L}}f+\frac{m_1}{2}+\frac{n_1}{2}-\frac{N}{4}\Big\}_{A_{1}^{\mathcal{L}}}-\Big\{B_{1}^{\mathcal{L}}X,A_{1}^{\mathcal{L}}B_{1}^{\mathcal{L}}f+\frac{m_1}{2}+\frac{n_1}{2}-\frac{N}{4}\Big\}_{B_{1}^{\mathcal{L}}}=G_{11}^{AB},\\
-\Big\{A_{1}^{\mathcal{L}}Y,A_{1}^{\mathcal{L}}B_{2}^{\mathcal{L}}f+\frac{m_1}{2}+\frac{n_2}{2}-\frac{N}{4}\Big\}_{A_{1}^{\mathcal{L}}}-\Big\{B_{2}^{\mathcal{L}}X,A_{1}^{\mathcal{L}}B_{2}^{\mathcal{L}}f+\frac{m_1}{2}+\frac{n_2}{2}-\frac{N}{4}\Big\}_{B_{2}^{\mathcal{L}}}=G_{12}^{AB},\\
\Big\{A_{2}^{\mathcal{L}}Y,A_{2}^{\mathcal{L}}B_{1}^{\mathcal{L}}f+\frac{m_2}{2}+\frac{n_1}{2}-\frac{N}{4}\Big\}_{A_{2}^{\mathcal{L}}}+\Big\{B_{1}^{\mathcal{L}}X,A_{2}^{\mathcal{L}}B_{1}^{\mathcal{L}}f+\frac{m_2}{2}+\frac{n_1}{2}-\frac{N}{4}\Big\}_{B_{1}^{\mathcal{L}}}=G_{21}^{AB},\\
-\Big\{A_{2}^{\mathcal{L}}Y,A_{2}^{\mathcal{L}}B_{2}^{\mathcal{L}}f+\frac{m_2}{2}+\frac{n_2}{2}-\frac{N}{4}\Big\}_{A_{2}^{\mathcal{L}}}+\Big\{B_{2}^{\mathcal{L}}X,A_{2}^{\mathcal{L}}B_{2}^{\mathcal{L}}f+\frac{m_2}{2}+\frac{n_2}{2}-\frac{N}{4}\Big\}_{B_{2}^{\mathcal{L}}}=G_{22}^{AB},\\
\Big\{A_{2}^{\mathcal{L}}X,A_{1}^{\mathcal{L}}A_{2}^{\mathcal{L}}f+\frac{N}{4}\Big\}_{A_{2}^{\mathcal{L}}}-\Big\{A_{1}^{\mathcal{L}}X,A_{1}^{\mathcal{L}}A_{2}^{\mathcal{L}}f+\frac{N}{4}\Big\}_{A_{1}^{\mathcal{L}}}=G_{12}^{A},\\
\Big\{B_{2}^{\mathcal{L}}Y,B_{1}^{\mathcal{L}}B_{2}^{\mathcal{L}}f+\frac{N}{4}\Big\}_{B_{2}^{\mathcal{L}}}-\Big\{B_{1}^{\mathcal{L}}Y,B_{1}^{\mathcal{L}}B_{2}^{\mathcal{L}}f+\frac{N}{4}\Big\}_{B_{1}^{\mathcal{L}}}=G_{12}^{B},
\end{gather*}
where
\begin{align}
&A_j=A_{j}^{\mathcal{L}}-\Big(\alpha\frac{\partial}{\partial a}+\beta\frac{\partial}{\partial b}\Big),\quad B_j=B_{j}^{\mathcal{L}}-\Big(\alpha\frac{\partial}{\partial a}+\beta\frac{\partial}{\partial b}\Big),\quad 1\leq j\leq 2,\notag
\end{align}
\begin{align}
&A_{j}^{\mathcal{L}}=\frac{1}{2}\Big(\mathcal{B}_{-1}+(-1)^j\frac{\partial}{\partial a}\Big),\quad B_{j}^{\mathcal{L}}=\frac{1}{2}\Big(\mathcal{B}_{-1}+(-1)^j\frac{\partial}{\partial b}\Big),\quad 1\leq j\leq 2,\notag
\end{align}
and where $G_{jk}^{A}$, $G_{jk}^{B}$ and $G_{jk}^{AB}$ only depend on $f$, its derivatives with respect to $a_1,\dots, a_{q-1}$, $b_1,\dots,b_{p-1}$, and its differentials up to the third order with respect to the operators $A_{j}^{\mathcal{L}}$, $B_{j}^{\mathcal{L}}$ and $\hat{\mathcal{B}}_0$, evaluated on the locus $\mathcal{L}$.
\end{theorem}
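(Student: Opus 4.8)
The plan is to derive Theorem \ref{particular version p=q=2} as the explicit specialization of Theorem \ref{system of equations for log tau with unknowns} to $p=q=2$: no new analytic input is required, and the whole task reduces to substituting $p=q=2$, identifying the unknowns, simplifying the differential operators and numerical constants, and tracking signs. I would begin by recording that for $p=q=2$ the count $\tfrac12(p+q)(p+q-1)$ equals $6$, and that the index ranges split these six relations into the four ``mixed'' equations ($1\le j\le q$, $1\le k\le p$) and one equation of each of the $A$- and $B$-types, for which $1\le j<k\le 2$ forces $(j,k)=(1,2)$. This reproduces the six displayed identities.

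Next I would identify the unknown functions. Since $q-1=p-1=1$, the auxiliary variables reduce to a single $\alpha:=\alpha_1$ and a single $\beta:=\beta_1$, and the combined operators of \eqref{notation derivatives} collapse to $\tfrac{1}{q}\partial_\alpha-\partial_{\alpha_k}=(-1)^k\tfrac12\frac{\partial}{\partial\alpha}$ and $\tfrac{1}{p}\partial_\beta-\partial_{\beta_k}=(-1)^k\tfrac12\frac{\partial}{\partial\beta}$. Hence the general unknowns $X_i=(\tfrac{1}{q}\partial_\alpha-\partial_{\alpha_i})f|_{\mathcal L}$ and $Y_i=(\tfrac{1}{p}\partial_\beta-\partial_{\beta_i})f|_{\mathcal L}$ collapse to $X_1=-X_2=-\tfrac12\frac{\partial f}{\partial\alpha}\big|_{\mathcal L}=:X$ and $Y_1=-Y_2=-\tfrac12\frac{\partial f}{\partial\beta}\big|_{\mathcal L}=:Y$, in accordance with the constraints $X_1+X_2=Y_1+Y_2=0$. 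There are therefore exactly two scalar unknowns $X$ and $Y$.

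Then I would specialize the operators and constants. Putting $q=2$ in \eqref{A^L and B^L} (with $a:=a_1$, $b:=b_1$) gives $A_j^{\mathcal L}=\tfrac12\big(\mathcal B_{-1}+(-1)^j\frac{\partial}{\partial a}\big)$ and $B_j^{\mathcal L}=\tfrac12\big(\mathcal B_{-1}+(-1)^j\frac{\partial}{\partial b}\big)$, while \eqref{A^L and B^L bis} gives $A_j=A_j^{\mathcal L}-\big(\alpha\frac{\partial}{\partial a}+\beta\frac{\partial}{\partial b}\big)$ and $B_j=B_j^{\mathcal L}-\big(\alpha\frac{\partial}{\partial a}+\beta\frac{\partial}{\partial b}\big)$, exactly the operators in the statement. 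For the constants \eqref{c_ij} I would use $m_1+m_2=n_1+n_2=N$ to get $c_{12}^A=A_1^{\mathcal L}A_2^{\mathcal L}f+\tfrac{m_1}{2}+\tfrac{m_2}{2}-\tfrac N4=A_1^{\mathcal L}A_2^{\mathcal L}f+\tfrac N4$ and, identically, $c_{12}^B=B_1^{\mathcal L}B_2^{\mathcal L}f+\tfrac N4$; the mixed constants $c_{jk}^{AB}=A_j^{\mathcal L}B_k^{\mathcal L}f+\tfrac{m_j}{2}+\tfrac{n_k}{2}-\tfrac N4$ carry over unchanged.

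Finally I would substitute $X_1=X$, $X_2=-X$, $Y_1=Y$, $Y_2=-Y$ into the six relations \eqref{summary} (in the operator form written just above \eqref{summary 2}) and invoke the bilinearity of the bracket $\{\,\cdot\,,\,\cdot\,\}_\bullet$. The functions $G_{jk}^A$, $G_{jk}^B$, $G_{jk}^{AB}$ are inherited verbatim from Theorem \ref{system of equations for log tau with unknowns}, so they keep the asserted dependence on $f$ and its $A_j^{\mathcal L}$-, $B_j^{\mathcal L}$- and $\hat{\mathcal B}_0$-differentials on $\mathcal L$. The one step demanding care---and the only genuine obstacle---is the sign bookkeeping: each time the index forces $X_2$ or $Y_2$ to enter, the identities $X_2=-X_1$ and $Y_2=-Y_1$ flip the sign of the corresponding bracket, and it is precisely these flips that produce the alternating $\pm$ signs seen in the four mixed equations and in the $A$- and $B$-type equations. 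Since every analytic ingredient (the Virasoro constraints, Lemma \ref{second derivatives}, and the bilinear KP identities of Corollary \ref{corollary bilinear identities}) has already been established in the proof of the general theorem, no further estimate or construction is needed.
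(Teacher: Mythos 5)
Your proposal is correct and follows essentially the same route as the paper: the paper presents Theorem \ref{particular version p=q=2} as a direct specialization of Theorem \ref{system of equations for log tau with unknowns} (offering no separate proof), and your explicit verification---the collapse of the unknowns to $X_1=-X_2=X$, $Y_1=-Y_2=Y$, the specialized operators $A_j^{\mathcal{L}}$, $B_j^{\mathcal{L}}$, the constants $c_{12}^A=A_1^{\mathcal{L}}A_2^{\mathcal{L}}f+\tfrac{N}{4}$, and the sign flips in the brackets coming from $X_2=-X$, $Y_2=-Y$---is precisely the bookkeeping that specialization requires, and all of it checks out.
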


The equations in Theorem \ref{particular version p=q=2} can be written
\begin{gather}
\big(A_{2}^{\mathcal{L}})^2X-\big(A_{1}^{\mathcal{L}}\big)^2X-\Big(A_{2}^{\mathcal{L}}\log c_{12}^{A}\Big)A_{2}^{\mathcal{L}}X+\Big(A_{1}^{\mathcal{L}}\log c_{12}^{A}\Big)A_{1}^{\mathcal{L}}X=g_{12}^{A},\notag\\  \big(B_{2}^{\mathcal{L}})^2Y-\big(B_{1}^{\mathcal{L}}\big)^2Y-\Big(B_{2}^{\mathcal{L}}\log c_{12}^{B}\Big)B_{2}^{\mathcal{L}}Y+\Big(B_{1}^{\mathcal{L}}\log c_{12}^{B}\Big)B_{1}^{\mathcal{L}}Y=g_{12}^{B},\notag\\
\big(A_{1}^{\mathcal{L}}\big)^2Y-\big(B_{1}^{\mathcal{L}}\big)^2X-\Big(A_{1}^{\mathcal{L}}\log c_{11}^{AB}\Big)A_{1}^{\mathcal{L}}Y+\Big(B_{1}^{\mathcal{L}}\log c_{11}^{AB}\Big)B_{1}^{\mathcal{L}}X=g_{11}^{AB},\label{the six equations}\\
\big(A_{2}^{\mathcal{L}}\big)^2Y+\big(B_{1}^{\mathcal{L}}\big)^2X-\Big(A_{2}^{\mathcal{L}}\log c_{21}^{AB}\Big)A_{2}^{\mathcal{L}}Y-\Big(B_{1}^{\mathcal{L}}\log c_{21}^{AB}\Big)B_{1}^{\mathcal{L}}X=g_{21}^{AB},\notag\\
-\big(A_{1}^{\mathcal{L}}\big)^2Y-\big(B_{2}^{\mathcal{L}}\big)^2X+\Big(A_{1}^{\mathcal{L}}\log c_{12}^{AB}\Big)A_{1}^{\mathcal{L}}Y+\Big(B_{2}^{\mathcal{L}}\log c_{12}^{AB}\Big)B_{2}^{\mathcal{L}}X=g_{12}^{AB},\notag\\
-\big(A_{2}^{\mathcal{L}}\big)^2Y+\big(B_{2}^{\mathcal{L}}\big)^2X+\Big(A_{2}^{\mathcal{L}}\log c_{22}^{AB}\Big)A_{2}^{\mathcal{L}}Y-\Big(B_{2}^{\mathcal{L}}\log c_{22}^{AB}\Big)B_{2}^{\mathcal{L}}X=g_{22}^{AB},\notag
\end{gather}
where the $c_{ij}$'s and the $g_{ij}$'s are defined in (\ref{c_ij}) and (\ref{g_ij}) with $p=q=2$.

The four differential operators $A_{1}^{\mathcal{L}},A_{2}^{\mathcal{L}},B_{1}^{\mathcal{L}},B_{2}^{\mathcal{L}}$ are not linearly independent. Indeed, we have
\begin{align}
&A_{1}^{\mathcal{L}}+A_{2}^{\mathcal{L}}=B_{1}^{\mathcal{L}}+B_{2}^{\mathcal{L}}=\mathcal{B}_{-1},\qquad A_{2}^{\mathcal{L}}-A_{1}^{\mathcal{L}}=\frac{\partial}{\partial a},\qquad B_{2}^{\mathcal{L}}-B_{1}^{\mathcal{L}}=\frac{\partial}{\partial b}.\notag
\end{align}
Consequently, we will rewrite the six equations (\ref{the six equations}) in terms of the three independent, commuting differential operators $\frac{\partial}{\partial a}$, $\frac{\partial}{\partial b}$ and $\mathcal{B}_{-1}$. Before performing this, let us introduce some notations. First, we will write
\begin{align}
\frac{\partial F}{\partial a}=F_a,\quad \frac{\partial F}{\partial b}=F_b, \quad \mathcal{B}_{-1}=F_c,\notag
\end{align}
for a function $F$. Next, we put
\begin{gather*}
G_1:=g_{12}^{A},\quad G_2:=g_{12}^{B},\quad G_3:=-g_{11}^{AB}+g_{21}^{AB}-g_{12}^{AB}+g_{22}^{AB},\quad G_4:=g_{11}^{AB}+g_{21}^{AB}-g_{12}^{AB}-g_{22}^{AB},\\
G_5:=\frac{1}{2}\big(g_{11}^{AB}-g_{21}^{AB}-g_{12}^{AB}+g_{22}^{AB}\big),\quad G_6:=-\frac{1}{2}\big(g_{11}^{AB}+g_{21}^{AB}+g_{12}^{AB}+g_{22}^{AB}\big),
\end{gather*}
and 
\begin{gather*}
\Delta_1:=\log c_{12}^A,\quad \Delta_2:=\log c_{12}^B,\quad \Delta_{3}:=\log \frac{c_{11}^{AB}c_{21}^{AB}}{c_{12}^{AB}c_{22}^{AB}},\\
\Delta_{4}:=\log\frac{c_{11}^{AB}c_{12}^{AB}}{c_{21}^{AB}c_{22}^{AB}},\quad \Delta_{5}:=\log\frac{c_{21}^{AB}c_{12}^{AB}}{c_{11}^{AB}c_{22}^{AB}},\quad  \Delta_{6}:=\log\big(c_{11}^{AB}c_{21}^{AB}c_{12}^{AB}c_{22}^{AB}\big).
\end{gather*}
We then define
\begin{gather*}
\alpha:=\frac{1}{4}(-\Delta_{3c}+\Delta_{6b}),\qquad\beta:=\frac{1}{4}(\Delta_{6c}-\Delta_{3b}),\\ 
\gamma:=\frac{1}{4}(\Delta_{4c}+\Delta_{5b}),\qquad \delta:=\frac{1}{4}(\Delta_{5c}+\Delta_{4b}),
\end{gather*}
and
\begin{gather*}
\hat\alpha:=\frac{1}{4}(-\Delta_{4c}+\Delta_{6a}),\qquad\hat\beta:=\frac{1}{4}(\Delta_{6c}-\Delta_{4a}),\\
\hat\gamma:=\frac{1}{4}(\Delta_{3c}+\Delta_{5a}),\qquad \hat\delta:=\frac{1}{4}(\Delta_{5c}+\Delta_{3a}).
\end{gather*}

Taking adequate linear combinations of the equations (\ref{the six equations}), and using all these notations, we obtain the following equivalent system
\begin{gather}
X_{ac}=G_1+\frac{1}{2}\Delta_{1a}X_c+\frac{1}{2}\Delta_{1c}X_a,\notag\\
Y_{bc}=G_2+\frac{1}{2}\Delta_{2b}Y_c+\frac{1}{2}\Delta_{2c}Y_b,\notag\\
X_{cc}+X_{bb}=G_{3}+\beta X_c+\alpha X_b+\hat\delta Y_c+\hat\gamma Y_a,\notag\\
Y_{cc}+Y_{aa}=G_{4}+\delta X_c+\gamma X_b+\hat\beta Y_c+\hat\alpha Y_a,\label{six equations in normal form bis}\\
X_{bc}-Y_{ac}=G_{5}+\frac{\alpha}{2} X_c+\frac{\beta}{2} X_b-\frac{\hat\alpha}{2} Y_c-\frac{\hat\beta}{2} Y_a,\notag\\
0=G_{6}+\frac{\gamma}{2} X_c+\frac{\delta}{2} X_b-\frac{\hat\gamma}{2} Y_c-\frac{\hat\delta}{2} Y_a.\notag
\end{gather}
This is a system of linear equations in the variables
\begin{align}
X_a,X_b,X_c,X_{ac},X_{bc},X_{bb},X_{cc},Y_a,Y_b,Y_c,Y_{ac},Y_{bc},Y_{aa},Y_{cc}.\notag
\end{align}
The coefficients of this linear system depend on $\log\tau^{E}_{m_1,m_2;n_1,n_2}(0;a,b)$ and its partial derivatives up to the third order with respect to $a,b$ and the endpoints of $E$. They are highly related. Indeed, there are only twelve non zero independent coefficients $\alpha,\beta,\gamma,\delta,\hat\alpha,\hat\beta,\hat\gamma,\hat\delta,\Delta_{1b},\Delta_{1c},\Delta_{2a},\Delta_{2c}$. We will now generate new linear equations by applying successively the derivatives $\frac{\partial}{\partial a}$, $\frac{\partial}{\partial b}$ and $\mathcal{B}_{-1}=\frac{\partial}{\partial c}$ to this system. Consider the following system of $37$ equations
\begin{align}
&(\ref{six equations in normal form bis}.1),(\ref{six equations in normal form bis}.2),(\ref{six equations in normal form bis}.3),(\ref{six equations in normal form bis}.4),(\ref{six equations in normal form bis}.5),(\ref{six equations in normal form bis}.6),(\ref{six equations in normal form bis}.6)_a,(\ref{six equations in normal form bis}.6)_b,(\ref{six equations in normal form bis}.6)_c,\notag\\
&(\ref{six equations in normal form bis}.1)_a,(\ref{six equations in normal form bis}.1)_b,(\ref{six equations in normal form bis}.1)_c,(\ref{six equations in normal form bis}.2)_a,(\ref{six equations in normal form bis}.2)_b,(\ref{six equations in normal form bis}.2)_c,(\ref{six equations in normal form bis}.3)_a,(\ref{six equations in normal form bis}.3)_b,(\ref{six equations in normal form bis}.3)_c,(\ref{six equations in normal form bis}.4)_a,(\ref{six equations in normal form bis}.4)_b,(\ref{six equations in normal form bis}.4)_c,\notag\\
&(\ref{six equations in normal form bis}.5)_a,(\ref{six equations in normal form bis}.5)_b,(\ref{six equations in normal form bis}.5)_c,(\ref{six equations in normal form bis}.6)_{aa},(\ref{six equations in normal form bis}.6)_{ab},(\ref{six equations in normal form bis}.6)_{ac},(\ref{six equations in normal form bis}.6)_{bb},(\ref{six equations in normal form bis}.6)_{bc},(\ref{six equations in normal form bis}.6)_{cc},\notag\\
&(\ref{six equations in normal form bis}.1)_{bb}-(\ref{six equations in normal form bis}.2)_{aa}-(\ref{six equations in normal form bis}.5)_{ab},\notag\\
&(\ref{six equations in normal form bis}.1)_{bb}+(\ref{six equations in normal form bis}.1)_{cc}-(\ref{six equations in normal form bis}.3)_{ac},\notag\\
&(\ref{six equations in normal form bis}.2)_{aa}+(\ref{six equations in normal form bis}.2)_{cc}-(\ref{six equations in normal form bis}.4)_{bc},\notag\\
&(\ref{six equations in normal form bis}.5)_{aa}+(\ref{six equations in normal form bis}.5)_{bb}+(\ref{six equations in normal form bis}.5)_{cc}+(\ref{six equations in normal form bis}.2)_{ab}-(\ref{six equations in normal form bis}.1)_{ab}+(\ref{six equations in normal form bis}.4)_{ac}-(\ref{six equations in normal form bis}.3)_{bc},\notag\\
&(\ref{six equations in normal form bis}.6)_{abc}-\frac{\delta}{2}\times(\ref{six equations in normal form bis}.1)_{bb}-\frac{\gamma}{2}\times(\ref{six equations in normal form bis}.1)_{bc}+\frac{\hat\delta}{2}\times(\ref{six equations in normal form bis}.2)_{aa}+\frac{\hat\gamma}{2}\times(\ref{six equations in normal form bis}.2)_{ac},\notag\\
&(\ref{six equations in normal form bis}.6)_{aac}-\frac{\delta}{2}\times(\ref{six equations in normal form bis}.1)_{ab}-\frac{\gamma}{2}\times(\ref{six equations in normal form bis}.1)_{ac}+\frac{\hat\delta}{2}\times\big((\ref{six equations in normal form bis}.1)_{ab}-(\ref{six equations in normal form bis}.5)_{aa}\big)+\frac{\hat\gamma}{2}\times\big((\ref{six equations in normal form bis}.1)_{bc}-(\ref{six equations in normal form bis}.5)_{ac}\big),\notag\\
&(\ref{six equations in normal form bis}.6)_{bbc}-\frac{\delta}{2}\times\big((\ref{six equations in normal form bis}.5)_{bb}+(\ref{six equations in normal form bis}.2)_{ab}\big)-\frac{\gamma}{2}\times\big((\ref{six equations in normal form bis}.5)_{bc}+(\ref{six equations in normal form bis}.2)_{ac}\big)+\frac{\hat\delta}{2}\times(\ref{six equations in normal form bis}.2)_{ab}+\frac{\hat\gamma}{2}\times(\ref{six equations in normal form bis}.2)_{bc}.\notag
\end{align}
These are linear equations, the variables being all the first, second and third order derivatives in $a,b,c$ of $X$ and $Y$, except $X_{aaa}$ and $Y_{bbb}$. Consequently, there are $36$ variables. Constructing the vector $\vec{x}:=(1,X_a,X_b,X_c,Y_a,Y_b,Y_c,\dots)^T\in\mathbb{C}^{37}$ (the first component being one, followed by the $36$ variables), this system of linear equations can be written
\begin{align}
\big[a_{ij}(f)\big]_{1\leq i,j\leq 37}.\vec{x}=0,\notag
\end{align}
where $a_{ij}$ are differential operators of order less or equal then $6$. But then we have necessarily that
\begin{align}
\det\big[a_{ij}(f)\big]_{1\leq i,j\leq 37}=0.\notag
\end{align}
This is a PDE for $f=\log P_{2,2}(E;a,b)+\log \tau_{m_1,m_2;n_1,n_2}^{\mathbb{R}}(0;0;a,b)$. Thus using the structure of the $6$ equations in Theorem \ref{particular version p=q=2}, we have obtained a much better result than the one in Theorem \ref{Theorem introduction}. Indeed, performing in detail the general method described in the proof of Theorem \ref{Theorem introduction}, one obtains in the case when $p=q=2$ a PDE given by a determinant of a $107\times 107$ matrix that is equal to zero. Thus in any particular case, one can do much better than the general case in Theorem \ref{Theorem introduction}.

\appendix

\section{The integral over the full range}

In section 5 we have shown that the function $f=\log\tau_{\vec{m},\vec{n}}^E(0;a,b)=\log P_{p,q}(E;a,b)+\log\tau_{\vec{m},\vec{n}}^\mathbb{R}(0;a,b)$ satisfies a nonlinear PDE, and in section 6 we have described this PDE when $p=q=2$. To obtain a PDE for $\log P_{p,q}(E;a,b)$ it is necessary to evaluate $\log\tau_{\vec{m},\vec{n}}^\mathbb{R}(0;a,b)$. This has been done in the particular case when $p=1$ (see the appendix in \cite{A.V.M.1}), but it seems harder to evaluate this function when $p,q\geq 2$. In this appendix, we conjecture some results about the evaluation of $\log\tau_{\vec{m},\vec{n}}^\mathbb{R}(0;a,b)$ when $p=q=2$.\\

Consider $N$ non-intersecting Brownian motions $x_1(t),x_2(t),\dots,x_N(t)$ in $\mathbb{R}$, with $m_1$ particles leaving from $a$ and $m_2$ particles leaving from $-a$, and $n_1$ particles ending in $b$ and $n_2$ particles ending in $-b$. We suppose $m_1=n_1$ and $m_2=n_2$. We denote
\begin{align}
\mathbb{P}_{+b,-b}^{+a,-a}\big(\text{all }x_i(t)\in E\big)&:=\mathbb{P}\left(\text{all }x_i(t)\in E\Bigg|\begin{array}{llll}\big(x_1(0),\dots,x_N(0)\big)=\big(\underbrace{a,\dots,a}_{m_1},\underbrace{-a,\dots,-a}_{m_2}\big) \\ \big(x_1(1),\dots,x_N(1)\big)=\big(\underbrace{b,\dots,b}_{m_1},\underbrace{-b,\dots,-b}_{m_2}\big) \end{array}\right),\notag
\end{align}
the probability to find all the particles in a set $E\subset\mathbb{R}$, at a given time $0<t<1$. We have
\begin{align}
\mathbb{P}_{b,-b}^{a,-a}\big(\text{all }x_i(t)\in E\big)=P_{2,2}\big(\tilde E;\tilde a,\tilde b\big),\notag
\end{align}
with the normalized problem defined in (\ref{normalized problem}), and where
\begin{align}
&\tilde{a}:=\sqrt{\frac{2(1-t)}{t}}\,a,\qquad\tilde{b}:=\sqrt{\frac{2t}{1-t}}\,b,\qquad\tilde{E}:=\sqrt{\frac{2}{t(1-t)}}\,E.\notag
\end{align}
Notice that $\tilde{a}\tilde{b}=2ab$. As shown in (\ref{deformed normalized problem}), we have
\begin{align}
P_{2,2}\big(\tilde E;\tilde a,\tilde b\big)=\frac{\tau_{\vec{m},\vec{n}}^E(0;\tilde a,\tilde b)}{\tau_{\vec{m},\vec{n}}^\mathbb{R}(0;\tilde a,\tilde b)},\notag
\end{align}
with $\tau_{\vec{m},\vec{n}}^E(0;\tilde a,\tilde b)$ defined in (\ref{tau}). We try to evaluate the function
\begin{align}
\tau_{\vec{m},\vec{n}}^\mathbb{R}(0;\tilde{a},\tilde b)&=\det\left(\begin{array}{cc}
\Big(\int_{\mathbb{R}}x^{i+j}e^{(\tilde a+\tilde b)x}e^{-\frac{x^2}{2}}dx\Big)_{\substack{0\leq i< m_1 \\ 0\leq j< m_1}} &  \Big(\int_{\mathbb{R}}x^{i+j}e^{(\tilde a-\tilde b)x}e^{-\frac{x^2}{2}}dx\Big)_{\substack{0\leq i< m_1 \\ 0\leq j< m_2}} \\
\Big(\int_{\mathbb{R}} x^{i+j}e^{(-\tilde a+\tilde b)x}e^{-\frac{x^2}{2}}dx\Big)_{\substack{0\leq i< m_2 \\ 0\leq j< m_1}} & \Big(\int_{\mathbb{R}}x^{i+j}e^{(-\tilde a-\tilde b)x}e^{-\frac{x^2}{2}}dx\Big)_{\substack{0\leq i< m_2 \\ 0\leq j< m_2}} 
\end{array}\right).\notag
\end{align}

Define 
\begin{align}
\mu_{i+j}(c)=\int_{\mathbb{R}}x^{i+j}e^{-\frac{x^2}{2}+cx}dx.\notag
\end{align}
We have
\begin{align}
\mu_0(c)=\sqrt{2\pi}e^{\frac{c^2}{2}},\quad\text{and}\quad\mu_{i+j}(c)=\Big(\frac{d}{d c}\Big)^{i+j}\mu_0(c).\notag
\end{align}
Define also the polynomials (Hermite polynomials up to a slight change of variables)
\begin{align}
p_j(x)=e^{-\frac{x^2}{2}}\Big(\frac{d}{d x}\Big)^{j}e^{\frac{x^2}{2}}.\notag
\end{align}
We then have
\begin{align}
\tau_{\vec{m},\vec{n}}^\mathbb{R}(0;\tilde a,\tilde b)&=\det\left(\begin{array}{cc}
\big(\mu_{i+j}(\tilde a+\tilde b)\big)_{\substack{0\leq i\leq m_1-1 \\ 0\leq j\leq m_1-1}} &  \big(\mu_{i+j}(\tilde a-\tilde b)\big)_{\substack{0\leq i\leq m_1-1 \\ 0\leq j\leq m_2-1}} \\
\big(\mu_{i+j}(-\tilde a+\tilde b)\big)_{\substack{0\leq i\leq m_2-1 \\ 0\leq j\leq m_1-1}} & \big(\mu_{i+j}(-\tilde a-\tilde b)\big)_{\substack{0\leq i\leq m_2-1 \\ 0\leq j\leq m_2-1}} 
\end{array}\right)\notag\\
&=(2\pi)^{\frac{m_1+m_2}{2}}e^{\frac{m_1+m_2}{2}(\tilde a+\tilde b)^2}e^{-4m_1\tilde a\tilde b}\notag\\
&\phantom{=(2\pi)^{\frac{m_1+m_2}{2}}}\times\,\det\left(\begin{array}{cc}
\big(e^{4\tilde a\tilde b}p_{i+j}(\tilde a+\tilde b)\big)_{\substack{0\leq i\leq m_1-1 \\ 0\leq j\leq m_1-1}} &  \big(p_{i+j}(\tilde a-\tilde b)\big)_{\substack{0\leq i\leq m_1-1 \\ 0\leq j\leq m_2-1}} \\
\big(p_{i+j}(-\tilde a+\tilde b)\big)_{\substack{0\leq i\leq m_2-1 \\ 0\leq j\leq m_1-1}} & \big(p_{i+j}(-\tilde a-\tilde b)\big)_{\substack{0\leq i\leq m_2-1 \\ 0\leq j\leq m_2-1}} 
\end{array}\right).\notag
\end{align}
We will use the following lemma.
\begin{lemma}
Consider the block matrix
\begin{align}
\left(\begin{array}{cc}
A & B \\
C & D 
\end{array}\right),\notag
\end{align}
with $A,D$ square matrices, and $D$ invertible. Then
\begin{align}
\det\left(\begin{array}{cc}
A & B \\
C & D 
\end{array}\right)=\det D\times \det\big(A-BD^{-1}C\big).\notag
\end{align}
\end{lemma}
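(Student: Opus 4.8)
The plan is to prove this classical Schur-complement identity by a block-triangular factorization, which converts the block determinant into a product of determinants of the diagonal blocks. The hypothesis that $D$ is invertible is used in exactly one place, namely to make sense of the entry $D^{-1}C$ used to clear the lower-left block.

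Concretely, I would first introduce the unipotent block matrix
\begin{align}
M:=\left(\begin{array}{cc} I & 0 \\ -D^{-1}C & I \end{array}\right),\notag
\end{align}
which is well defined since $D^{-1}$ exists, and which satisfies $\det M=1$ because it is block lower-triangular with identity blocks on the diagonal. The central computation is then the product
\begin{align}
\left(\begin{array}{cc} A & B \\ C & D \end{array}\right)M=\left(\begin{array}{cc} A-BD^{-1}C & B \\ 0 & D \end{array}\right),\notag
\end{align}
whose lower-left block vanishes precisely because $C-DD^{-1}C=0$. Taking determinants of both sides, using the multiplicativity of $\det$ together with $\det M=1$, and then invoking the elementary fact that the determinant of a block upper-triangular matrix is the product of the determinants of its diagonal blocks, yields $\det D\cdot\det(A-BD^{-1}C)$, as claimed.

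There is no genuine obstacle here: the statement is standard linear algebra, and the only subsidiary point is the block upper-triangular determinant formula itself, which can be obtained by Laplace expansion along the zero block or by applying the same type of factorization once more. I record the lemma separately only because it is the tool needed to evaluate $\tau_{\vec{m},\vec{n}}^\mathbb{R}(0;\tilde a,\tilde b)$ by peeling off the invertible diagonal block $D$.
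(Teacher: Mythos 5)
Your proof is correct and is essentially the same argument as the paper's: both clear one off-diagonal block by an elementary block operation (you via right-multiplication by a unipotent matrix with $\det=1$, the paper via an equivalent row/column manipulation after pulling out $\det D$) and then invoke multiplicativity of the determinant together with the block-triangular determinant formula. The difference in which block is eliminated is purely cosmetic.
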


\begin{proof}
Doing row and column perations, we have
\begin{align}
\det\left(\begin{array}{cc}
A & B \\
C & D 
\end{array}\right)&=\det\left(\begin{array}{cc}
I & 0 \\
0 & D 
\end{array}\right)\times \det\left(\begin{array}{cc}
A & B \\
D^{-1}C & I 
\end{array}\right)=\det D\times \det\left(\begin{array}{cc}
A-BD^{-1}C & 0 \\
D^{-1}C & I
\end{array}\right)\notag\\
&=\det D\times \det\big(A-BD^{-1}C\big).\notag
\end{align}
\end{proof}

Using this lemma, we have
\begin{align}
\tau_{\vec{m},\vec{n}}^\mathbb{R}(0;\tilde a,\tilde b)=(2\pi)^{\frac{m_1+m_2}{2}}e^{\frac{m_1+m_2}{2}(\tilde a+\tilde b)^2}e^{-4m_1\tilde a\tilde b}\det D\times \det\big(A-BD^{-1}C\big),\notag
\end{align}
where
\begin{gather}
A=\Big(e^{4\tilde a\tilde b}p_{i+j}(\tilde a+\tilde b)\Big)_{\substack{0\leq i\leq m_1-1 \\ 0\leq j\leq m_1-1}},\qquad B=\Big(p_{i+j}(\tilde a-\tilde b)\Big)_{\substack{0\leq i\leq m_1-1 \\ 0\leq j\leq m_2-1}},\notag\\
C=\Big(p_{i+j}(-\tilde a+\tilde b)\Big)_{\substack{0\leq i\leq m_2-1 \\ 0\leq j\leq m_1-1}},\qquad D=\Big(p_{i+j}(-\tilde a-\tilde b)\Big)_{\substack{0\leq i\leq m_2-1 \\ 0\leq j\leq m_2-1}},\label{ABCD}
\end{gather}
and it is well-known that $\det D=\prod_{i=0}^{m_2-1}i\,!$. Let us note
\begin{align}
X=e^{4\tilde a\tilde b}-\sum_{j=0}^{m_2-1}\frac{(4\tilde a\tilde b)^j}{j\,!}=(4\tilde a\tilde b)^{m_2}\frac{1}{2\pi i}\oint_{\Gamma_{0,4\tilde a\tilde b}}\frac{e^z\,dz}{z^{m_2}(z-4\tilde a\tilde b)}=(8ab)^{m_2}\frac{1}{2\pi i}\oint_{\Gamma_{0,8ab}}\frac{e^z\,dz}{z^{m_2}(z-8ab)},\label{X}
\end{align}
where $\Gamma_{0,4\tilde a\tilde b}$ denotes a closed contour containing $0$ and $4\tilde a\tilde b$ in the complex plane. Computer observations point out that for $A,B,C,D$ as in (\ref{ABCD}) we have
\begin{align}
\det\big(A-BD^{-1}C\big)=\det\Big(p_{i+j}(\tilde a+\tilde b)X+P_{i,j}(\tilde a,\tilde b)\Big)_{\substack{0\leq i\leq m_1-1 \\ 0\leq j\leq m_1-1}},\label{det} 
\end{align}
where $P_{i,j}(\tilde a,\tilde b)$ is a polynomial in $\tilde a,\tilde b$ such that $P_{i,j}(\tilde a,\tilde b)=P_{j,i}(\tilde b,\tilde a)$, $P_{0,0}(\tilde a,\tilde b)=0$, and the order of $P_{i,j}(\tilde a,\tilde b)$ is $2(m_1-1)+i+j$ when $i+j>0$. We will develop (\ref{det}) in the large $m_2$-limit.\\

\begin{figure}[!ht]
\centering
\includegraphics[width=7cm]{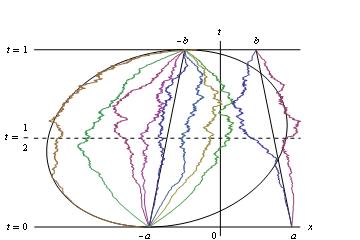}
\caption{Non-intersecting Brownian motions in the large $m_2$-limit, with $m_1$ fixed}\label{picture}
\end{figure}

Let us consider the large $m_2$-limit, keeping $m_1$ fixed, see Figure \ref{picture}. If $m_1=0$, for large $m_2$ the mean density of brownian particles at any time $0<t<1$ is supported by an interval with endpoints given by $\pm\sqrt{2m_2t(1-t)}-a(1-t)-bt$. When $m_1$ is fixed but not necessarily zero, the non-intersecting nature of the cloud of $m_2$ particles implies that the largest one will again reach a height of about $\sqrt{\frac{m_2}{2}}-\frac{a+b}{2}$ at $t=\frac{1}{2}$. We will consider the following scaling given in \cite{A.F.V.M.} for the starting and the target points
\begin{align}
a=\frac{1}{2}\sqrt{\frac{m_2}{2}}\Big(1+\frac{A}{m_2^{1/3}}\Big),\quad\text{and}\quad b=\frac{1}{2}\sqrt{\frac{m_2}{2}}\Big(1-\frac{B}{m_2^{1/3}}\Big).
\end{align}
With this scaling, the $m_1$ wanderers will interact with the bulk of $m_2$ particles ($m_2$ very large), upon considering regions close to $x=\sqrt{\frac{m_2}{2}}-\frac{a+b}{2}$ and $t=\frac{1}{2}$, namely at space-time positions $(x,t)$ which scale like
\begin{align}
&t=\frac{1}{2}+\frac{1}{2}\frac{\tau}{m_2^{1/3}},\qquad x=\frac{1}{2}\sqrt{\frac{m_2}{2}}+\frac{\xi-\tau^2}{2\sqrt{2}m_2^{1/6}}+\frac{1}{4\sqrt{2}}m_2^{1/6}(B-A)+\frac{1}{4\sqrt{2}}\frac{(A+B)\tau}{m_2^{1/6}}.\notag
\end{align}
We suppose $A<B$. Under this scaling, the quantity
\begin{align}
8ab=m_2\Big(1+\frac{A-B}{m_2^{1/3}}-\frac{AB}{m_2^{2/3}}\Big),\notag
\end{align}
is strictly less than $m_2$, for $m_2$ large enough. Consequently, by Cauchy's theorem, the contour $\Gamma_{0,8ab}$ in (\ref{X}) can be taken to be a circle centered at the origin and of radius $m_2$. We will follow \cite{A.F.V.M.} to obtain an assymptotic expansion for $X$. Making the change of variable $z=um_2$ in the integral definig $X$ we have
\begin{align}
X&=(8ab)^{m_2}\frac{m_2^{-m_2}}{2\pi i}\oint_{|u|=1}\frac{e^{m_2F(u)}}{u-1-(A-B)m_2^{-1/3}+ABm_2^{-2/3}}du,\label{integral}
\end{align}
where
\begin{align}
F(u):=u-\ln u=1+\frac{1}{2}(u-1)^2+O\big((u-1)^3\big),\notag
\end{align}
with 
\begin{align}
\Re\big(F(u)\big)=\Re(u)-\ln(|u|).\label{Real part of F}
\end{align}
The stationary points of the function $F(u)$ are solution of the equation $F'(u)=0$, and thus there is one stationary point at $u=1$. We can deform the path $|u|=1$ into $\gamma_\delta=\{1+iy|-\delta\leq y\leq \delta\}$ plus a circle segment $\gamma'$ centered at the origin and joining the extremities of $\gamma_\delta$. It follows from (\ref{Real part of F}) that $\gamma_\delta$ is tangent to the steep descent path through $u=1$. We choose $\delta=m_2^{-2/5}$. Then the contribution to the integral coming from $\gamma_\delta$ is given by
\begin{align}
\int_{\gamma_\delta}\frac{e^{m_2F(u)}}{u-1-(A-B)m_2^{-1/3}+ABm_2^{-2/3}}du=\frac{-m_2^{1/3}e^{m_2}}{(A-B)}\int_{1-im_2^{-2/5}}^{1+im_2^{-2/5}}e^{\frac{m_2}{2}(u-1)^2}du\big(1+\mathcal{O}(m_2^{-1/5})\big).\notag
\end{align}
Making the change of variable $\omega=-i\sqrt{\frac{m_2}{2}}(u-1)$, we obtain
\begin{align}
\int_{\gamma_\delta}\frac{e^{m_2F(u)}}{u-1-(A-B)m_2^{-1/3}+ABm_2^{-2/3}}du=\frac{-i\sqrt{2}m_2^{-1/6}e^{m_2}}{(A-B)}\int_{-\frac{1}{\sqrt{2}}m_2^{1/10}}^{\frac{1}{\sqrt{2}}m_2^{1/10}}e^{-\omega^2}d\omega\big(1+\mathcal{O}(m_2^{-1/5})\big).\notag
\end{align}
As
\begin{align}
\int_{\frac{1}{\sqrt{2}}m_2^{1/10}}^{+\infty}e^{-\omega^2}d\omega=o\big(m_2^{-1/5}\big),\notag
\end{align}
we have
\begin{align}
\int_{\gamma_\delta}\frac{e^{m_2F(u)}}{u-1-(A-B)m_2^{-1/3}+ABm_2^{-2/3}}du=\frac{-i\sqrt{2\pi}m_2^{-1/6}e^{m_2}}{(A-B)}\big(1+\mathcal{O}(m_2^{-1/5})\big).\notag
\end{align}

Let us now evaluate the contribution to the integral coming from $\gamma'$. Along $\gamma'$, we have $u=\sqrt{1+\delta^2}e^{i\theta}$ with $|\cos\theta|\leq \frac{1}{\sqrt{1+\delta^2}}$ and $\delta=m_2^{-2/5}$, and thus
\begin{align}
\big|e^{m_2F(u)}\big|=e^{m_2\Re(F(u))}=e^{m_2\big(\sqrt{1+\delta^2}\cos\theta-\frac{1}{2}\ln(1+\delta^2)\big)}\leq e^{m_2}e^{-\frac{m_2}{2}\ln(1+\delta^2)}.\notag
\end{align}
It follows that 
\begin{align}
e^{-m_2}\big|e^{m_2F(u)}\big|= \mathcal{O}\Big(e^{-\frac{1}{2}m_2^{1/5}}\Big).\notag
\end{align}
Along $\gamma'$, we also have
\begin{align}
\Big|\frac{1}{u-1-(A-B)m_2^{-1/3}+ABm_2^{-2/3}}\Big|&\leq\frac{1}{\big|\sqrt{1+\delta^2}-1-|A-B|m_2^{-1/3}-|AB|m_2^{-2/3}\big|},\notag
\end{align}
and thus
\begin{align}
\Big|\frac{1}{u-1-(A-B)m_2^{-1/3}+ABm_2^{-2/3}}\Big|\leq\frac{-m_2^{1/3}}{A-B}\big(1+\mathcal{O}(m_2^{-1/3})\big).\notag
\end{align}
It follows that the contribution to the integral in (\ref{integral}) from $\gamma'$ is of order $O(e^{-cm_2^{1/5}})$ smaller then the main contribution coming from $\gamma_\delta$, for some $0<c<\frac{1}{2}$. Consequently we have 
\begin{align}
\oint_{\Gamma_{0,4\tilde a\tilde b}}\frac{e^z\,dz}{z^{m_2}(z-4\tilde a\tilde b)}=-\sqrt{2\pi} im_2^{-1/6}\Big(\frac{e}{m_2}\Big)^{m_2}\frac{1}{(A-B)}\big(1+\mathcal{O}(m_2^{-1/5})\big).\notag
\end{align}
It follows that in the large $m_2$-limit
\begin{align}
X=\frac{-m_2^{-1/6}}{\sqrt{2\pi}(A-B)}\exp\Big(m_2+(A-B)m_2^{2/3}-\frac{1}{2}(A^2+B^2)m_2^{1/3}+\frac{1}{2}AB(A-B)\Big)\big(1+\mathcal{O}(m_2^{-1/5})\big).\notag
\end{align}
Consequently, in the large $m_2$-limit, for $m_1$ fixed, $\tau_{\vec{m},\vec{n}}^\mathbb{R}(\tilde a,\tilde b)$ is expected to behave like
\begin{align}
\tau_{\vec{m},\vec{n}}^\mathbb{R}(\tilde a,\tilde b)\approx(2\pi)^{\frac{m_1+m_2}{2}}e^{\frac{m_1+m_2}{2}(\tilde a+\tilde b)^2}e^{-4m_2\tilde a\tilde b}\Big(\prod_{i=0}^{m_1-1}i\,!\Big)\Big(\prod_{j=0}^{m_2-1}j\,!\Big)X^{m_2}.\notag
\end{align}

\newpage


\begin{thebibliography}{99}

\bibitem{A.D.V.M.} Adler M., Delépine J. and van Moerbeke P. \emph{Dyson's nonintersecting Brownian motions with a few outliers}. Comm. Pure Appl. Math. \textbf{62} (2009), 334-395.
\bibitem{A.F.V.M.} Adler M., Ferrari P.L. and van Moerbeke P. \emph{Airy processes with wanderers and new universality classes}. Ann. Prob. \textbf{38} (2010), no. 2, 714-769.
\bibitem{A.F.V.M.2} Adler M., Ferrari P.L. and van Moerbeke P. \emph{Non-intersecting random walks in the neighborhood of a symmetric tacnode}. arXiv: 1007.1163 (math-ph).
\bibitem{A.S.V.M.} Adler M., Shiota T. and van Moerbeke P. \emph{Random matrices, vertex operators and the Virasoro algebra}. Phys. Lett. A \textbf{208} (1995), no. 1-2, 67-78.
\bibitem{A.V.M.1} Adler M. and van Moerbeke P. \emph{PDEs for the Gaussian Ensemble with External Source and the Pearcey Distribution}. Comm. Pure Appl. Math. \textbf{60} (2007), 1261-1292.
\bibitem{A.V.M.3} Adler M. and van Moerbeke P. \emph{Hermitian, symmetric and symplectic random ensembles : PDE's for the distribution of the spectrum}. Ann. of Math. \textbf{153} (2001), 149-189. arXiv: math-ph/0009001.
\bibitem{A.V.M.4} Adler M. and van Moerbeke P. \emph{PDE's for the joint distributions of the Dyson, Airy and Sine processes}. Ann. Prob. \textbf{33} (2005), 1326-1361.
\bibitem{A.V.M.V.} Adler M., van Moerbeke P. and Vanhaecke P. \emph{Moment matrices and multicomponent KP, with applications to random matrix theory}. Commun. Math. Phys. \textbf{286} (2009), 1-38.
\bibitem{A.B.K.} Aptekarev A.I., Bleher P.M. and Kuijlaars A.B.J. \emph{Large $n$ limit of Gaussian random matrices with external source. II}. Comm. Math. Phys. \textbf{259} (2005), no. 2, 367-389.
\bibitem{Da.K.} Daems E. and Kuijlaars A.B.J. \emph{Multiple orthogonal polynomials of mixed type and non-intersecting Brownian motions}. J. Approx. Theory \textbf{146} (2007), no.1, 91-114.
\bibitem{Da.K.V.} Daems E., Kuijlaars A.B.J. and Veys W. \emph{Asymptotics of non-intersecting Brownian motions and a $4\times 4$ Riemann-Hilbert problem}. J. Approx. Theory \textbf{153} (2008), no.2, 225-256.
\bibitem{D.K.1} Delvaux S. and Kuijlaars A.B.J. \emph{A phase transition for non-intersecting Brownian motions, and the Painlevé II equation}. Int. Math. Res. Not. IMRN \textbf{19} (2009), 3639-3725.
\bibitem{D.K.2} Delvaux S. and Kuijlaars A.B.J. \emph{A graph-based equilibrium problem for the limiting distribution of non-intersecting Brownian motions at low temperature}. Constr. Approx. \textbf{32} (2010), 467--512.
\bibitem{D.K.Z.}  Delvaux S., Kuijlaars A.B.J. and Zhang L. \emph{Critical behavior of non-intersecting Brownian motions at a tacnode}. To appear in Comm. Pure Appl. Math. arXiv:1009.2457v1.
\bibitem{F.D.} Dyson F.J. \emph{A Brownian-Motion Model for the Eigenvalues of a Random Matrix}. Journal of Math. Phys. \textbf{3} (1962), 1191-1198.
\bibitem{K.J.} Johansson K. \emph{Universality of the Local Spacing Distribution in Certain Ensembles of Hermitian Wigner Matrices}. Comm. Math. Phys. \textbf{215} (2001), 683-705.
\bibitem{Karlin-McGregor} Karlin S. and McGregor J. \emph{Coincidence probabilities}. Pacific J. Math. \textbf{9} (1959), 1141-1164.
\bibitem{K.T.1} Katori M. and Tanemura H. \emph{Functional Central limit theorems for vivious walkers}. Stoch. Stoch. Rep. \textbf{75} (2003), 369-390.
\bibitem{K.T.2} Katori M. and Tanemura H. \emph{Noncolliding Brownian Motion and Determinantal Processes}. J. Stat. Phys. \textbf{129} (2007), 1233-1277.
\bibitem{U.T.} Ueno K. and Takasaki K. \emph{Toda Lattice Hierarchy}. Adv. Studies in Pure Math. \textbf{4} (1984), 1-95.
\bibitem{T.W.1} Tracy C.A. and Widom H. \emph{Differential equations for Dyson processes}. Comm. Math. Phys. \textbf{252} (2004), 7-41. 
\bibitem{T.W.2} Tracy C. A. and Widom H. \emph{The Pearcey process}. Comm. Math. Phys. \textbf{263} (2006), 381-400. 


\end{thebibliography}
\end{document}